\tikzstyle{vertex}=[circle, draw, inner sep=0pt, minimum size=6pt]
\newcommand{\rr}[0]{\bar{r}}
\newcommand{\ccvar}[0]{c}
\newcommand{\lmin}[0]{\bar{\lambda}}
\newtheorem{lemma}{Lemma}
\newtheorem{definition}{Definition}
\newtheorem{assumption}{Assumption}
\newcommand{\R}{\mathbb{R}}
\newcommand{\E}{\mathbb{E}}
\newcommand{\defeq}{\coloneqq}
\newcommand{\del}{\partial}
\DeclarePairedDelimiter{\abs}{\lvert}{\rvert} %
\DeclarePairedDelimiter{\brk}{[}{]}
\DeclarePairedDelimiter{\crl}{\{}{\}}
\DeclarePairedDelimiter{\prn}{(}{)}
\DeclarePairedDelimiter{\nrm}{\|}{\|}
\DeclarePairedDelimiter{\norm}{\|}{\|}
\DeclarePairedDelimiter{\ceil}{\lceil}{\rceil}
\DeclareMathOperator*{\argmin}{arg\,min}
\newcommand{\mc}[1]{\mathcal{#1}}
\def\ddefloop#1{\ifx\ddefloop#1\else\ddef{#1}\expandafter\ddefloop\fi}
\def\ddef#1{\expandafter\def\csname 
bb#1\endcsname{\ensuremath{\mathbb{#1}}}}
\def\ddefloop#1{\ifx\ddefloop#1\else\ddef{#1}\expandafter\ddefloop\fi}
\def\ddef#1{\expandafter\def\csname 
b#1\endcsname{\ensuremath{\mathbf{#1}}}}
\def\ddef#1{\expandafter\def\csname 
c#1\endcsname{\ensuremath{\mathcal{#1}}}}
\def\ddef#1{\expandafter\def\csname 
h#1\endcsname{\ensuremath{\widehat{#1}}}}
\def\ddef#1{\expandafter\def\csname 
hc#1\endcsname{\ensuremath{\widehat{\mathcal{#1}}}}}
\def\ddef#1{\expandafter\def\csname 
t#1\endcsname{\ensuremath{\widetilde{#1}}}}
\def\ddef#1{\expandafter\def\csname 
tc#1\endcsname{\ensuremath{\widetilde{\mathcal{#1}}}}}
\newcommand{\indicator}[1]{\mathbbm{1}_{\crl{#1}}}    %Indicator
\newsavebox\CBox
\newcommand{\ossgd}{\textsc{Regularized-Quadratic-Solver}}
\newcommand{\rqga}{\textsc{Regularized-Quadratic-Gradient-Access}}
\newcommand{\cqs}{\textsc{Constrained-Quadratic-Solver}}
\newcommand{\fedsn}{\textsc{FedSN}}
\newcommand{\fedac}{\textsc{FedAc}}
\newcommand{\fedsnlite}{\textsc{FedSN-Lite}}
\newcommand{\pars}[1]{\left( #1 \right)}
\newcommand{\inner}[2]{\left\langle #1,\, #2 \right\rangle}
\newcommand{\removed}[1]{}
\newcommand\restrict[1]{\raisebox{-.5ex}{$\big|$}_{#1}}
\DeclareSymbolFont{bbold}{U}{bbold}{m}{n}
\DeclareSymbolFontAlphabet{\mathbbold}{bbold}
\newcommand{\updatet}{\Delta \tilde{x}}
\newcommand{\bu}[0]{\bar{u}}
\newcommand{\bg}[0]{\bar{\gamma}}
\newcommand{\boldI}[0]{\mathbf{I}}
\renewcommand{\algorithmicrequire}{\textbf{Input:}}
\title{\rule{\linewidth}{1.5pt}\\ \textbf{A Stochastic Newton Algorithm for Distributed Convex Optimization} \\\rule[8pt]{\linewidth}{1pt}}
\author{\normalsize
\begin{minipage}{0.3\textwidth}
\centering
\textbf{Brian Bullins} \\
Toyota Technological \\
Institute at Chicago \\
\small\url{bbullins@ttic.edu}
\end{minipage}
\begin{minipage}{0.3\textwidth}
\centering
\textbf{Kumar Kshitij Patel}\\
Toyota Technological\\ Institute at Chicago \\
\small\url{kkpatel@ttic.edu}
\end{minipage}
\begin{minipage}{0.3\textwidth}
\centering
\textbf{Ohad Shamir} \\
Weizmann Institute 
of Science \\
\small\url{ohad.shamir@weizmann.ac.il} \\ $ $
\end{minipage}\\\\
\normalsize
\begin{minipage}{0.3\textwidth}
\centering
\textbf{Nathan Srebro} \\
Toyota Technological \\
Institute at Chicago \\
\small\url{nati@ttic.edu}
\end{minipage}
\begin{minipage}{0.3\textwidth}
\centering
\textbf{Blake Woodworth}\\ 
Toyota Technological\\
Institute at Chicago \\
\small\url{blake@ttic.edu}
\end{minipage}

}
\date{}
\renewcommand{\P}{\mathbb{P}}
\begin{document}
\maketitle

\begin{abstract}
We propose and analyze a stochastic Newton algorithm for homogeneous distributed stochastic convex optimization, where each machine can calculate stochastic gradients of the same population objective, as well as stochastic {\em Hessian-vector products} (products of an independent unbiased estimator of the Hessian of the population objective with arbitrary vectors), with many such stochastic computations performed between rounds of communication.  We show that our method can reduce the number, and frequency, of required communication rounds compared to existing methods without hurting performance, by proving convergence guarantees for quasi-self-concordant objectives (e.g., logistic regression), alongside empirical evidence.
\end{abstract}
% \vspace{-1.2em}
\section{Introduction}
Stochastic optimization methods that leverage parallelism have proven immensely useful in modern optimization problems. Recent advances in machine learning have highlighted their importance as these techniques now rely on millions of parameters and increasingly large training sets.

While there are many possible ways of parallelizing optimization algorithms, we consider the intermittent communication setting \citep{zinkevich2010parallelized,cotter2011better,dekel2012optimal,shamir2014communication,woodworth2018graph,woodworth2021min}, where $M$ parallel machines work together to optimize an objective during $R$ rounds of communication, and where during each round each machine may perform some basic operation (e.g., access the objective by invoking some oracle) $K$ times, and then communicate with all other machines. An important example of this setting is when this basic operation gives independent, unbiased stochastic estimates of the gradient, in which case this setting includes algorithms like Local SGD \citep{zinkevich2010parallelized,Coppola2015:IPM,Zhou2018:Kaveraging,stich2019local,woodworth2020local}, Minibatch SGD \citep{dekel2012optimal}, Minibatch AC-SA \citep{ghadimi2012optimal}, and many others.

We are motivated by the observation of \cite{woodworth2020local} that for \emph{quadratic} objectives, first-order methods such as one-shot averaging \citep{zinkevich2010parallelized,zhang2013communication}---a special case of Local SGD with a single round of communication---can optimize the objective to a very high degree of accuracy. This prompts trying to reduce the task of optimizing general convex objectives to a short sequence of quadratic problems. Indeed, this is precisely the idea behind many second-order algorithms including Newton's method \citep{nesterov1994interior}, trust-region methods \citep{nocedal2006numerical}, and cubic regularization \citep{nesterov2006cubic}, as well as methods that go beyond second-order information \citep{nesterov2019implementable,bullins2020highly}.

Computing each Newton step requires solving, for convex $F$, a linear system of the form $\nabla^2 F(x) \Delta x = -\nabla F(x)$. This can, of course, be done directly using the full Hessian of the objective. However, this is computationally prohibitive in high dimensions, particularly when compared to cheap stochastic gradients. To avoid this issue, we reformulate the Newton step as the solution to a convex quadratic problem, 
$\min_{\Delta x} \frac{1}{2}\Delta x^\top\nabla^2 F(x) \Delta x\ +~\nabla F(x)^\top \Delta x$, which we then solve using one-shot averaging. Conveniently, computing stochastic gradient estimates for this quadratic objective does not require computing the full Hessian matrix, as it only requires stochastic gradients and \emph{stochastic Hessian-vector products}. This is attractive computationally since, for many problems, the cost of computing stochastic Hessian-vector products is similar to the cost of computing stochastic gradients, and both involve similar operations \citep{pearlmutter1994fast}. 
Furthermore, highlighting the importance of these estimates, recent works have relied on Hessian-vector products to attain faster rates for reaching approximate stationary points in both deterministic \citep{agarwal2017finding, carmon2018accelerated} and stochastic \citep{allen2018natasha2, arjevani2020second} non-convex optimization. 

\begin{table}\label{tab:results}
\caption{Convergence guarantees for different algorithms in the intermittent communication setting. Notation is as follows: $H$: smoothness; $U$: third-order-smoothness; $\sigma$: stochastic gradient variance; $\rho$: stochastic Hessian-vector product variance; $g(x;z)$: stochastic gradient oracle; $h(x,u;z')$: stochastic Hessian-vector product oracle (see \Cref{sec:setting} for complete details). For the sake of clarity, we omit additional constants and logarithmic factors.\\[-.5em]}
\centering
    \def\arraystretch{2}
	\begin{tabular}{ l l c}
	\toprule
		\makecell{Algorithm\\[0.3em](Reference)} & Convergence Rate & \makecell{Assumption,\\[0.3em]Oracle Access} \\
		\midrule
		\makecell{Local SGD\\[0.3em]\citep{woodworth2020local}} & $\frac{HB^2}{KR} + \frac{\sigma B}{\sqrt{MKR}} + \frac{H^{1/3}\sigma^{2/3}B^{4/3}}{K^{1/3}R^{2/3}}$ & \makecell{A1\\[0.3em]$g(x;z)$} \\
		\midrule
		\makecell{\fedac{}\\[0.3em]\citep{yuan2020federated}}  & $\frac{HB^2}{KR^2} + \frac{\sigma B}{\sqrt{MKR}} + \min\crl*{\frac{H^{1/3}\sigma^{2/3}B^{4/3}}{K^{1/3}R},\frac{H^{1/2}\sigma^{1/2}B^{3/2}}{K^{1/4}R}}$  & \makecell{A1\\[0.3em]$g(x;z)$}\\
		\midrule
		\makecell{Local SGD\\[0.3em]\citep{yuan2020federated}}  & $\frac{HB^2}{KR} + \frac{\sigma B}{\sqrt{MKR}} + \frac{U^{1/3}\sigma^{2/3}B^{5/3}}{K^{1/3}R^{2/3}}$  & \makecell{A3 ($3^{rd}$-order Smooth)\\[0.3em]$g(x;z)$}\\
		\midrule
		\makecell{\fedac{}\\[0.3em] \citep{yuan2020federated}} & $\frac{HB^2}{KR^2} + \frac{\sigma B}{\sqrt{MKR}} + \frac{H^{1/3}\sigma^{2/3}B^{4/3}}{M^{1/3}K^{1/3}R} + \frac{U^{1/3}\sigma^{2/3}B^{5/3}}{K^{1/3}R^{4/3}}$ & \makecell{A3 ($3^{rd}$-order Smooth)\\[0.5em]$g(x;z)$}\\
		\midrule
		\makecell{\fedsn{}\\[0.3em]\textbf{(\Cref{thm:combined-alg})}} & exp. decay + $\frac{HB^2}{KR}  + \frac{\sigma B}{\sqrt{MK}} + \frac{\rho B^2}{\sqrt{K}R}$  & \makecell{A2 (QSC)\\[0.3em]$g(x;z)$, $h(x,u;z')$}\\
		\toprule
	\end{tabular}
\end{table}

In the context of distributed optimization, second-order methods have shown promise in the empirical risk minimization (ERM) setting, whereby estimates of $F$ are constructed by distributing the component functions of the finite-sum problem across machines. Such methods which leverage this structure have since been shown to lead to improved communication efficiency \citep{shamir2014communication,zhang2015disco,reddi2016aide,wang2018giant,crane2019dingo,islamov2021distributed,gupta2021localnewton}. An important difference, however, is that these methods work in a batch setting, meaning they allow for repeated access to the same $K$ examples each round on each machine, giving a total of $MK$ samples. In contrast, we work in the stochastic (one-pass, streaming) setting, and so our model independently samples a fresh set of $MK$ examples per round, for a total of $MKR$ examples.

\nopagebreak[4]
\vspace{-5mm}
\subsubsection*{Our results} 
Our primary algorithmic contribution, which we sketch below and present in full in \Cref{sec:results}, is the method \textsc{Federated-Stochastic-Newton} (\fedsn{}), a distributed approximate Newton method which leverages the benefits of one-shot averaging for quadratic problems. We provide in \Cref{sec:results}, under the condition of quasi-self-concordance \citep{bach2010self}, the main guarantees of our method (\Cref{thm:combined-alg}). In \Cref{sec:compare} we show how, for some parameter regimes in terms of $M$, $K$, and $R$, our method may improve upon the rates of previous first-order methods, including Local SGD and \fedac{} \citep{yuan2020federated}.
In \Cref{sec:experiments}, we present a more practical version of our method, \fedsnlite{} (\Cref{alg:stochastic-newton-lite}), alongside empirical results which show that we can significantly reduce communication as compared to other first-order methods.

\begin{algorithm}[H]
\renewcommand\thealgorithm{}
   \caption{\hspace{-0.5em}\textbf{(Sketch)} \textsc{Federated-Stochastic-Newton}, a.k.a., \fedsn{}$(x_0)$}
   \label{alg:stochastic-newton-sketch}
\begin{algorithmic}
\STATE \hspace{-1em}(Operating on objective $F(\cdot)$ with stochastic gradient $g(\cdot;\cdot)$ and Hessian-vector product $h(\cdot;\cdot,\cdot)$ oracles.)
    \REQUIRE $x_0 \in \R^d$.\renewcommand{\algorithmicrequire}{\textbf{Hyperparameters:}}
    \REQUIRE $T$: main iterations; $\bar{\xi}$: local stability parameter; and $N$: binary search iterations (see \Cref{tab:hyperparams}).
    \ENSURE Approximate solution to $\min_x F(x)$
   \FOR{$t = 0,1,\dots,T-1$}
   \STATE $\Lambda = \crl*{\lambda_1,\dots,\lambda_N}$ \hfill $\triangleright$ Set of regularization parameters
   \WHILE{$\Lambda \neq \varnothing$}
   \STATE{$\lambda = \textrm{Median}(\Lambda)$}
   \STATE Define: $Q_\lambda(u) = \frac{1}{2}u^\top (\bar{\xi}\nabla^2 F(x)+\lambda \boldI) u + \nabla F(x)^\top u$\hfill $\triangleright$ Regularized quadratic subproblem\vspace{0.6em}
\STATE Run SGD on $Q_{\lambda}(u)$ independently on each machine (starting with $u_0=0$) for $K$ steps, then communicate the final iterate $u_K^m$ from each machine $m$, and average them to obtain $\tilde{u}=\frac{1}{M}\sum_{m=1}^M u_K^m$.\vspace{0.6em}
    \IF{Sufficiently good $\lambda$ is found}
    \STATE $\updatet_t = \tilde{u}$
    \STATE \textbf{break}
    \ELSE
    \STATE Reduce the size of $\Lambda$ by half.
    \ENDIF
   \ENDWHILE
   \STATE Update: $x_{t+1} = x_t + \updatet_t$
   \ENDFOR
   \STATE \textbf{Return:} $x_T$
\end{algorithmic}
\end{algorithm}

% \vspace{-0.5em}
\section{Preliminaries}\label{sec:setting}
We consider the following optimization problem:
\begin{equation}
\min_{x\in\R^d} F(x),
\end{equation}
and throughout we use $F^*$ to denote the minimum of this problem. We further use $\nrm*{\cdot}$ to denote the standard $\ell_2$ norm, we let $\nrm*{x}_{\mathbf{A}} \defeq \sqrt{x^\top \mathbf{A} x}$ for a positive semidefinite matrix $\mathbf{A}$, and we let $\boldI$ denote the identity matrix of order $d$.

Next, we establish several sets of assumptions, beginning with those which are standard for smooth, stochastic, distributed convex optimization. We would note that we are working in the \emph{homogeneous} distributed setting (i.e., each machine may access the same distribution), rather than the heterogeneous setting \citep{khaled2019first, karimireddy2019scaffold, koloskova2020unified, woodworth2020minibatch, khaled2020tighter}.
\begin{assumption}[A1]\label{assump:a1}\
\begin{enumerate}[label=(\alph*), itemsep=0.1em]
    \item $F$ is convex, differentiable, and $H$-smooth, i.e., for all $x,y \in \R^d$, \[F(y) \leq F(x) + \nabla F(x)^\top(y-x) + \frac{H}{2}\norm{y-x}^2.\]
    \item There is a minimizer $x^* \in \argmin_x F(x)$ such that $\norm{x^*} \leq B$.
    \item We are given access to a stochastic first-order oracle in the form of an estimator $g : \R^d \times \mathcal{Z} \mapsto \R^d$, and a distribution $\mathcal{D}$ on $\mathcal{Z}$ such that, for any $x \in \R^d$ queried by the algorithm, the oracle draws $z \sim \mathcal{D}$, and the algorithm observes an estimate $g(x;z)$ that satisfies:
    \begin{enumerate}[label=(\roman*), itemsep=0.1em]
        \item $g(x;z)$ is an unbiased gradient estimate, i.e., $\E_z g(x;z) = \nabla F(x)$.
        \item $g(x;z)$ has bounded variance, i.e., $\E_z \norm{g(x;z) - \nabla F(x)}^2 \leq \sigma^2.$
    \end{enumerate}
\end{enumerate}
\end{assumption}

In order to provide guarantees for Newton-type methods, we will require additional notions of smoothness. In particular, we consider $\alpha$-quasi-self-concordance (QSC) \citep{bach2010self}, which for convex and three-times differentiable $F$ is satisfied for $\alpha \geq 0$ when, for all $x \in \textrm{dom}(F)$, $v, u \in \R^d$,
\begin{equation*}
\abs{\nabla^3 F(x)[v,u,u]} \leq \alpha\norm{v}\pars{\nabla^2 F(x)[u,u]},
\end{equation*}
where we define
\begin{equation*}
    \nabla^k F(x)[u_1,u_2,\dots,u_k] \defeq \frac{\del^k}{\del u_1, \del u_2, \dots, \del u_k}\restrict{t_1=0,t_2=0,\dots,t_k=0} F(x+t_1 u_1 + t_2 u_2 + \dots + t_k u_k),
\end{equation*}
for $k \geq 1$, i.e., the $k^{th}$ directional derivative of $F$ at $x$ along the directions $u_1,u_2,\dots,u_k$.
Related to this is the condition of $\alpha$-self-concordance, which has proven useful for classic problems in linear optimization \citep{nesterov1994interior}, whereby for all $x \in \textrm{dom}(F), u \in \R^d$,
\begin{equation*}
    \abs{\nabla^3 F(x)[u,u,u]} \leq 2\alpha\pars{\nabla^2 F(x)[u,u]}^{3/2}.
\end{equation*}

Though quasi-self-concordance is perhaps not as widely studied as self-concordance, recent work has brought its usefulness to light in the context of machine learning \citep{bach2010self, karimireddy2018global, carmon2020acceleration}. Notably, for logistic regression, i.e., problems of the form
\begin{equation}\label{eq:logistic-regression}
    \min_x F(x) = \frac{1}{N}\sum\limits_{i = 1}^N \log\prn*{1+e^{-b_i\inner{a_i}{x}}},
\end{equation}
we observe that $\alpha$-quasi-self-concordance holds with $\alpha \leq \max_{i}\crl*{\norm{b_i a_i}}$. Interestingly, this function is \emph{not} self-concordant, thus highlighting the importance of introducing the notion of QSC for such problems, and indeed, neither of these conditions implies the other in general.\footnotemark \footnotetext{For the other direction, note that $F(x)=-\ln(x)$ is $1$-self-concordant but not quasi-self-concordant.}

We now introduce further assumptions in terms of both additional oracle access and other smoothness notions. The following outlines the requirements for the stochastic Hessian-vector products, though we again stress that the practical cost of such an oracle is often on the order of that for stochastic gradients \citep{pearlmutter1994fast,allen2018natasha2}.

\begin{assumption}[A2]\label{assump:a2}\ In addition to Assumption~\ref{assump:a1}, we have: \\[-1.5em]
\begin{enumerate}[label=(\alph*), itemsep=0.1em]
    \item $F$ is three-times differentiable and $\alpha$-quasi-self-concordant, i.e., for all $x,v,u \in \R^d,$
    \[\abs*{\nabla^3 F(x)[v,u,u]} \leq \alpha\nrm{v}\nabla^2 F(x)[u,u].\]
    \item We are given access to a stochastic Hessian-vector product oracle in the form of an estimator $h : \R^d \times \R^d \times \mathcal{Z} \mapsto \R^d$, and a distribution $\mathcal{D}$ on $\mathcal{Z}$ such that, for any pair $x,u \in \R^d$ queried by the algorithm, the oracle draws $z' \sim \mathcal{D}$, and the algorithm observes an estimate $h(x,u;z')$ that satisfies:
    \begin{enumerate}[label=(\roman*), itemsep=0.1em]
        \item $h(x,u;z')$ is an unbiased Hessian-vector product estimate, i.e., $\E_{z'} h(x,u;z') = \nabla F^2(x)u$.
        \item $h(x,u;z')$ has bounded variance of the form $\E_{z'} \norm{h(x,u;z') - \nabla^2 F(x)u}^2 \leq \rho^2\norm{u}^2.$
    \end{enumerate}
\end{enumerate}
\end{assumption}

Meanwhile, other works \cite[e.g.,][]{yuan2020federated} require different control over third-order smoothness and fourth central moment.  We do not require this assumption in our analysis, and include it here for comparison.

\begin{assumption}[A3]\label{assump:a3}\ In addition to Assumption~\ref{assump:a1}, we have: \\[-1.5em]
\begin{enumerate}[label=(\alph*), itemsep=0.1em]
    \item $F$ is twice-differentiable and $U$-third-order-smooth, i.e., for all $x,y \in \R^d$, 
        \[F(y) \leq F(x) + \nabla F(x)^\top(y-x) + \frac{1}{2}\inner{\nabla^2 F(x)(y-x)}{y-x} + \frac{U}{6}\norm{y-x}^3.\]
    \item $g(x;z)$ has bounded fourth central moment, i.e., $\E_z \norm{g(x;z) - \nabla F(x)}^4 \leq \sigma^4.$
\end{enumerate}
\end{assumption}

% \vspace{-0.8em}
\section{Main results}\label{sec:results}
We begin by describing our main algorithm, \fedsn{} (\Cref{alg:stochastic-newton}).
Namely, our aim is to solve convex minimization problems $\min_x F(x)$, subject to Assumption~\ref{assump:a2}.

\setcounter{algorithm}{0}
\begin{algorithm}[H]
   \caption{\textsc{Federated-Stochastic-Newton}, a.k.a., \fedsn{}$(x_0)$}
   \label{alg:stochastic-newton}
\begin{algorithmic}
\STATE \hspace{-1em}(Operating on objective $F(\cdot)$ with stochastic gradient $g(\cdot;\cdot)$ and Hessian-vector product $h(\cdot;\cdot,\cdot)$ oracles.)
    \REQUIRE $x_0 \in \R^d$.\renewcommand{\algorithmicrequire}{\textbf{Hyperparameters:}}
    \REQUIRE $T$: main iterations; and $\bar{\xi}$: local stability (see \Cref{tab:hyperparams}).
    \ENSURE Approximate solution to $\min_x F(x)$ \hfill $\triangleright$ See \Cref{thm:combined-alg}
   \FOR{$t = 0,1,\dots,T-1$}
   \STATE $\updatet_t = \textrm{\cqs{}}\prn*{x_t}$\hfill $\triangleright$ Approx. $\min_{u:\norm{u}\leq \frac{1}{2}\rr} \frac{\bar{\xi}}{2}u^\top \nabla^2 F(x_t) u + \nabla F(x_t)^\top u$
   \STATE Update: $x_{t+1} = x_t + \updatet_t$
   \ENDFOR
   \STATE \textbf{Return:} $x_T$
\end{algorithmic}
\end{algorithm}

We will rely throughout the paper on several hyperparameter settings and parameter functions, which we collect in \Cref{tab:hyperparams,tab:paramfuncs}. Recall that $M$ is the amount of parallel workers, $R$ is the number of rounds of communication, $K$ is the number of basic operations performed between rounds of communication, and $H$, $B$, $\sigma$, $\alpha$, and $\rho$ are as defined in Assumptions \ref{assump:a1} and \ref{assump:a2}.
\begin{table}[H]
\def\arraystretch{1.5}%
\centering
\begin{tabular}{ l l } 
 \toprule
 Hyperparameter Setting  & Description\\
 \midrule 
$T \defeq \Bigg\lfloor\frac{R}{4\zeta}\log^2\prn*{\prn*{\frac{R}{\zeta}}}\Bigg\rfloor$ (for $\zeta = 4096 + 4\prn*{80 + 32\log K + 24\log(1+2\alpha B)}^2$) & Main iterations\\[0.3em]
$\beta := 0$ & Momentum\\
 $\rr \defeq \min\crl*{\frac{32B}{T}\log(TK),\,\frac{1}{5\alpha}}$ & Trust-region radius\\
 $\bar{\xi} \defeq \exp(\alpha \rr)$ & Local stability\\
 $\lmin \defeq \max\crl*{\frac{2eH}{K-2},\,\frac{2\rho}{\sqrt{K}},\, \frac{32eH\log(51200)}{K},\, \frac{4\rho\sqrt{2\log(51200)}}{\sqrt{K}},\,\frac{320\sqrt{2}\rho}{\sqrt{MK}},\,\frac{320\sigma}{\rr\sqrt{MK}},\, \frac{8eH}{K-16}}$ & Regularization bound\\
 $N \defeq \left\lceil1 + \frac{5}{2}\log\frac{H(B + 5T\rr)}{3\lmin\rr}\right\rceil$ & Binary search iterations\\
 $C \defeq \left\lceil 8\log\prn*{\ceil{\log_2 N}\prn*{4 + \frac{e H}{\lmin} + \frac{80H(B + 5T\rr)}{\lmin \rr}}} \right\rceil$ & Reg.\ quadratic repetitions\\[0.8em]
 \toprule 
\end{tabular}
\caption{Hyperparameters $T$, $\beta$, $\rr$, $\bar{\xi}$, $\bar{\lambda}$, $N$, and $C$, as used by \fedsn{} and its subroutines.}
\label{tab:hyperparams}
\end{table}

\begin{table}[H]
\def\arraystretch{1}%
\centering
\begin{tabular}{ l l } 
 \toprule
 Parameter Function & Description\\
 \midrule \\
$\eta_k(\lambda) := \begin{cases}
\eta_\lambda & K \leq \frac{2}{\lambda}\max\crl*{\bar{\xi}H+\lambda,\, \frac{\rho^2}{\lambda}}\textrm{ or }k < \frac{K}{2} \\
\frac{4}{\lambda\prn*{\frac{8}{\lambda}\max\crl*{\bar{\xi}H+\lambda,\,\frac{\rho^2}{\lambda}} +k-\frac{K}{2}}} & K > \frac{2}{\lambda}\max\crl*{\bar{\xi}H+\lambda,\, \frac{\rho^2}{\lambda}}
\textrm{ and }k \geq \frac{K}{2}
\end{cases}$ & Reg.\ quadratic stepsizes \\
& \\ 
$w_k(\lambda) := \begin{cases}
(1-\lambda\eta_\lambda + \eta_\lambda^2\rho^2)^{-k-1} & K \leq \frac{2}{\lambda}\max\crl*{\bar{\xi}H+\lambda,\, \frac{\rho^2}{\lambda}} \\
0 & K > \frac{2}{\lambda}\max\crl*{\bar{\xi}H+\lambda,\, \frac{\rho^2}{\lambda}} \textrm{ and }k < \frac{K}{2} \\
\frac{8}{\lambda}\max\crl*{\bar{\xi}H+\lambda,\,\frac{\rho^2}{\lambda}}\\\qquad +\ k - \frac{K}{2} - 1  & K > \frac{2}{\lambda}\max\crl*{\bar{\xi}H+\lambda,\, \frac{\rho^2}{\lambda}} \textrm{ and }k \geq \frac{K}{2}
\end{cases}$ \rule{0pt}{1em} & Reg.\ quadratic weights\\
&\\
 \toprule 
\end{tabular}
\caption{Parameter functions $\eta_k(\lambda)$ and $w_k(\lambda)$, as used by \fedsn{} and its subroutines, where $\bar{\xi}$ is as defined in \Cref{tab:hyperparams}, and where $\eta_\lambda$ denotes $\eta(\lambda) := \frac{1}{2}\min\crl*{\frac{1}{\bar{\xi}H+\lambda},\ \frac{\lambda}{\rho^2}}$.}
\label{tab:paramfuncs}
\end{table}

% \vspace{-0.5em}

Among our assumptions, we note in particular the condition of quasi-self-concordance, under which several works have provided efficient optimization methods. For example, \cite{bach2010self} analyzes Newton's method under QSC conditions, in a manner analogous to that of standard self-concordance analyses, to establish its behavior in the region of quadratic $(\log\log(1/\epsilon))$ convergence. More recently, both \cite{karimireddy2018global} and  \cite{carmon2020acceleration} have presented methods which rely instead on a trust-region approach, whereby, for a given iterate $x_t$, each iteration amounts to approximately solving a \emph{constrained} subproblem of the form
\begin{equation*}
\min_{\Delta x: \nrm*{\Delta x}\leq c} \frac{\xi}{2}\Delta x^\top \nabla^2 F(x_t) \Delta x + \nabla F(x_t)^\top \Delta x,
\end{equation*}
for some $\xi \geq 1$ and problem-dependent radius $c > 0$. This stands in constrast to the \emph{unconstrained} minimization problem $\min_{\Delta x} \frac{\xi}{2}\Delta x^\top\nabla^2 F(x) \Delta x + \nabla F(x)^\top \Delta x$, which, as we may recall, forms the basis of the standard (damped) Newton method. \cite{carmon2020acceleration} further use their trust-region subroutine to approximately implement a certain $\ell_2$-ball minimization oracle, which they combine with an acceleration scheme \citep{monteiro2013accelerated}.

These results show, at a high level, that as long as the radius of the constrained quadratic (trust-region) subproblem is not too large, it is possible to make sufficient progress on the \emph{global} problem by approximately solving the quadratic subproblem.
Our method proceeds in a similar fashion: each iteration of \Cref{alg:stochastic-newton} provides an approximate solution to a constrained quadratic problem. To begin, we follow \cite{karimireddy2018global} in defining $\delta(r)$-local (Hessian) stability.

\begin{definition}\label{def:localstability}
Let $\delta: \R^+ \mapsto \R^+$. We say that a twice-differentiable and convex function $F$ is $\delta(r)$-locally stable if, for any $r > 0$ and any $x,y \in \R^d$ $(x \neq y)$ such that $\nrm{x - y} \leq r$ and $\nrm{x - y}_{\nabla^2 F(x)}>0$,
\[
\nrm{x - y}_{\nabla^2 F(y)} \leq \delta(r) \nrm{x - y}_{\nabla^2 F(x)}.
\] 
\end{definition}
As the next lemma shows, quasi-self-concordance is sufficient to provide this type of local stability.
\begin{lemma}[Theorem I \citep{karimireddy2018global}]\label{lem:qsc-sensitivity}
If $F$ is $\alpha$-quasi-self-concordant, then $F$ is $\delta(r) = \exp(\alpha r)$-locally stable.
\end{lemma}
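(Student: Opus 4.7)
The plan is to reduce the lemma to a one-dimensional differential inequality along the line segment from $x$ to $y$, and then apply Gr\"onwall-style integration using the QSC condition.

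More precisely, fix $x, y$ with $u \defeq x - y$, $\nrm{u} \le r$, and $\nrm{u}_{\nabla^2 F(x)} > 0$. Parameterize the segment by $\gamma(t) = x + t(y-x)$ for $t \in [0,1]$, so $\gamma(0)=x$ and $\gamma(1)=y$, and define
\[
\phi(t) \defeq u^\top \nabla^2 F(\gamma(t))\, u = \nabla^2 F(\gamma(t))[u,u],
\]
so that $\phi(0) = \nrm{u}_{\nabla^2 F(x)}^2$ and $\phi(1) = \nrm{u}_{\nabla^2 F(y)}^2$. The chain rule gives
\[
\phi'(t) = \nabla^3 F(\gamma(t))[y-x,\,u,\,u],
\]
and the $\alpha$-QSC hypothesis, applied with $v = y-x$, immediately yields
\[
\abs{\phi'(t)} \le \alpha \nrm{y-x}\, \nabla^2 F(\gamma(t))[u,u] = \alpha \nrm{u}\, \phi(t).
\]

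Next, I would argue that $\phi(t) > 0$ for all $t \in [0,1]$: since $\phi$ is nonnegative, continuous, and satisfies $\phi' \ge -\alpha\nrm{u}\,\phi$, it cannot vanish in finite time starting from $\phi(0) > 0$. Hence I can divide through to get
\[
\abs*{\frac{d}{dt} \log \phi(t)} \le \alpha \nrm{u} \le \alpha r,
\]
and integrating from $0$ to $1$ produces $\log(\phi(1)/\phi(0)) \le \alpha r$, i.e., $\phi(1) \le e^{\alpha r}\phi(0)$. Taking square roots gives
\[
\nrm{x-y}_{\nabla^2 F(y)} \le e^{\alpha r / 2} \nrm{x-y}_{\nabla^2 F(x)} \le \exp(\alpha r)\,\nrm{x-y}_{\nabla^2 F(x)},
\]
which is the claimed $\delta(r) = \exp(\alpha r)$ local stability bound (with room to spare by a factor of two in the exponent).

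There is no real obstacle here: the only subtle point is ensuring $\phi(t)$ stays strictly positive so that $\log\phi$ is well-defined, which is handled by the one-sided differential inequality above. Everything else is a routine application of the chain rule, QSC, and Gr\"onwall.
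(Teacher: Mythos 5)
Your proof is correct, and in fact establishes the slightly sharper constant $\delta(r)=\exp(\alpha r/2)$, which implies the stated $\exp(\alpha r)$. Note, however, that the paper does not supply its own proof of this lemma: it is cited as Theorem~I of \citet{karimireddy2018global}, so there is no paper proof to compare against. Your argument is the standard one from the (quasi-)self-concordance literature: restrict to the segment, observe that $\phi(t)=\nabla^2 F(\gamma(t))[u,u]$ satisfies a one-dimensional differential inequality $|\phi'|\leq\alpha\nrm{u}\phi$ via the QSC bound, verify $\phi$ stays strictly positive (the Gr\"onwall lower bound $\phi(t)\geq\phi(0)e^{-\alpha\nrm{u}t}$ rules out it vanishing), and integrate $\frac{d}{dt}\log\phi$ to get the multiplicative control, then take square roots. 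All steps are valid. One small remark: the factor-of-two slack you observe is real, and whether the cited theorem states $e^{\alpha r/2}$ or $e^{\alpha r}$ depends on the normalization convention for $\alpha$ in the QSC definition; either way your derivation covers the claim as stated.
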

The advantage of local stability is that it ensures that approximate solutions to locally-defined constrained quadratic problems can guarantee progress on the global objective, and we state this more formally in the following lemma. Note that this lemma is similar to \cite[Theorem IV][]{karimireddy2018global}, though we allow for an additive error in the subproblem solves in addition to the multiplicative error, and its proof can be found in \Cref{app:stochastic-newton}.
\begin{restatable}{lemma}{newtonanalysis}\label{lem:qsc-newton}
Let $F$ satisfy Assumption \ref{assump:a2} and be $\delta(r)$-locally stable for $\delta : \R^+ \mapsto \R^+$, let $x_0 \in \R^d$ be as input to \fedsn{} (\Cref{alg:stochastic-newton}), let $\ccvar > 0$, let $\theta \in [0,1)$, and define 
\begin{equation*}
    Q_t(\Delta x) \defeq \frac{\delta\prn*{5\ccvar}}{2} \Delta x^\top \nabla^2 F(x_t)\Delta x + \nabla F(x_t)^\top \Delta x,
\end{equation*} where $x_t$ is the $t^{th}$ iterate of \Cref{alg:stochastic-newton}. Furthermore, suppose we are given that in each iteration of \Cref{alg:stochastic-newton}, $\nrm{\updatet_t} \leq 5\ccvar$ and
\[
\E Q_t(\updatet_t) - \min_{\Delta x:\nrm{\Delta x}\leq\frac{1}{2}\ccvar} Q_t(\Delta x) \leq \theta\prn*{Q_t(0) - \min_{\Delta x:\nrm{\Delta x}\leq\frac{1}{2}\ccvar} Q_t(\Delta x)} + \epsilon,
\]
 for $\epsilon > 0$. Then for each $T \geq 0$, \Cref{alg:stochastic-newton} guarantees
\[
\E F(x_T) - F^* \leq \E\brk*{F(x_0) - F^*}\exp\prn*{-\frac{T\ccvar(1-\theta)}{2B\delta(\frac{1}{2}\ccvar)\delta\prn*{5\ccvar}}} + \frac{2B\delta(\frac{1}{2}\ccvar)d\prn*{5\ccvar}\epsilon}{\ccvar(1-\theta)}\ .
\]
\end{restatable}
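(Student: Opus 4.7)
The argument is a two-step per-iteration analysis combining a Taylor upper bound with a subproblem progress bound, yielding a geometric contraction with additive noise.

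\textbf{Step 1 (Taylor upper bound on function decrease).} Since $\nrm{\updatet_t} \leq 5\ccvar$, $\delta(r)$-local stability implies $u^\top \nabla^2 F(x_t + s\updatet_t)\, u \leq \delta(5\ccvar)\, u^\top \nabla^2 F(x_t)\, u$ for all $s \in [0,1]$ and all $u$, so the integral form of Taylor's theorem gives $F(x_{t+1}) - F(x_t) \leq \nabla F(x_t)^\top \updatet_t + \tfrac{\delta(5\ccvar)}{2}\updatet_t^\top \nabla^2 F(x_t)\updatet_t = Q_t(\updatet_t)$. Taking expectations and using the hypothesized subproblem guarantee (together with $Q_t(0) = 0$), I obtain $\E[F(x_{t+1}) - F(x_t) \mid x_t] \leq (1-\theta)\, Q_t^* + \epsilon$, where $Q_t^* := \min_{\nrm{\Delta x}\leq \ccvar/2} Q_t(\Delta x) \leq 0$.

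\textbf{Step 2 (Lower bound on the subproblem optimum).} To show $-Q_t^*$ is at least linearly large in the current suboptimality, I test $Q_t$ on the direction $\Delta = \mu(x^* - x_t)$ for $\mu \leq \mu_0 := (\ccvar/2)/\max\crl*{\nrm{x^*-x_t},\, \ccvar/2}$, which keeps $\Delta$ feasible. Convexity yields both $\nabla F(x_t)^\top(x^* - x_t) \leq -(F(x_t) - F^*)$ and $F(x_t + \mu(x^*-x_t)) - F(x_t) \leq -\mu(F(x_t)-F^*)$; meanwhile, the one-sided Taylor \emph{lower} bound from local stability at scale $\ccvar/2$ gives $F(x_t + \Delta) - F(x_t) - \nabla F(x_t)^\top \Delta \geq \tfrac{\mu^2}{2\delta(\ccvar/2)}\nrm{x^*-x_t}_{\nabla^2 F(x_t)}^2$. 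Combining these three inequalities yields (up to constants) $\mu_0 \nrm{x^*-x_t}_{\nabla^2 F(x_t)}^2 \lesssim \delta(\ccvar/2)(F(x_t)-F^*)$. Substituting this into $Q_t(\mu(x^*-x_t))$ and optimizing $\mu \in [0,\mu_0]$ produces $Q_t^* \leq -\Omega\prn*{\mu_0 (F(x_t)-F^*)/(\delta(\ccvar/2)\,\delta(5\ccvar))}$. Finally, $\nrm{x^*-x_t}$ is controlled by $\nrm{x^*} \leq B$ together with a bookkeeping bound on $\nrm{x_t - x_0}$ (every step satisfies $\nrm{\updatet_t}\leq 5\ccvar$, and the schedule in \Cref{tab:hyperparams} makes $T\ccvar$ comparable to $B$ up to logs), giving $\mu_0 = \Omega(\ccvar/B)$.

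\textbf{Step 3 (Unrolling).} Chaining Steps 1 and 2 yields the per-step recursion $\E[F(x_{t+1}) - F^* \mid x_t] \leq \prn*{1 - \tfrac{\ccvar(1-\theta)}{2B\,\delta(\ccvar/2)\,\delta(5\ccvar)}}\,(F(x_t) - F^*) + \epsilon$. Unrolling for $t = 0, \ldots, T-1$ and using $1 - r \leq e^{-r}$ turns $(1 - r)^T$ into the stated $\exp(-Tr)$ factor on $\E[F(x_0)-F^*]$, and summing the geometric series of additive errors collapses into the steady-state value $\epsilon/r = \tfrac{2B\,\delta(\ccvar/2)\,\delta(5\ccvar)\,\epsilon}{\ccvar(1-\theta)}$.

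The main obstacle is Step 2, specifically the case $\nrm{x^*-x_t} > \ccvar/2$ when the global minimizer lies outside the trust region. The trick of testing on $\Delta = \mu(x^*-x_t)$ and invoking the Taylor lower bound at scale $\ccvar/2$ (rather than at scale $\nrm{x^*-x_t}$, where local stability would blow up) is what lets us extract progress proportional to $(F(x_t)-F^*)/B$; ensuring that the $\delta(\ccvar/2)$ factor from the one-sided lower bound and the $\delta(5\ccvar)$ factor from the upper bound combine correctly in the denominator is the main bookkeeping challenge.
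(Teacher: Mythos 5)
Your overall strategy mirrors the paper's: a Taylor-type upper bound via local stability reduces the per-iteration decrease to the subproblem optimum $Q_t^* := \min_{\nrm{\Delta x}\leq c/2} Q_t(\Delta x)$, a Taylor lower bound plus convexity shows $-Q_t^* \gtrsim \frac{c}{B\,\delta(c/2)\delta(5c)}(F(x_t)-F^*)$, and unrolling the one-step contraction with additive error gives the statement. The paper packages the middle step into two quoted lemmas (\Cref{lem:qsc-upper-lower-bound,lem:quadratic-superscript-stuff}); you re-derive it by hand, which is a legitimate route.

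However, your Step 2 contains a false intermediate claim. The inequality $\mu_0\nrm{x^*-x_t}_{\nabla^2 F(x_t)}^2 \lesssim \delta(c/2)(F(x_t)-F^*)$ cannot be extracted from the three facts you cite: combining your Taylor lower bound with the convexity bound $F(x_t+\mu_0(x^*-x_t)) - F(x_t)\leq -\mu_0(F(x_t)-F^*)$ only gives
\[
\frac{\mu_0}{2\delta(c/2)}\nrm{x^*-x_t}_{\nabla^2 F(x_t)}^2 \leq -\nabla F(x_t)^\top(x^*-x_t) - (F(x_t)-F^*),
\]
and $-\nabla F(x_t)^\top(x^*-x_t)$ is only bounded \emph{below} by $F(x_t)-F^*$, not above. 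Concretely, for $F(x)=e^x+e^{-x}$ with $x^*=0$ and $x_t$ large, $\mu_0\approx c/(2x_t)$, $\nrm{x^*-x_t}^2_{F''(x_t)}\approx x_t^2 e^{x_t}$, and $F(x_t)-F^*\approx e^{x_t}$, so your claimed left side grows like $x_t e^{x_t}$ while the right stays at order $e^{x_t}$. The fix is to not isolate the quadratic term: test $Q_t$ at the \emph{rescaled} direction $\Delta x = \nu(x^*-x_t)$ with $\nu := \mu_0/(\delta(c/2)\delta(5c))\leq\mu_0$, still feasible since $\delta(c/2)\delta(5c)\geq 1$. Writing $A := \nrm{x^*-x_t}_{\nabla^2 F(x_t)}^2$ and $L := \nabla F(x_t)^\top(x^*-x_t)$,
\begin{align*}
Q_t(\nu(x^*-x_t)) = \frac{\nu^2\delta(5c)}{2}A + \nu L
= \frac{1}{\delta(c/2)\delta(5c)}\brk*{\frac{\mu_0^2}{2\delta(c/2)}A + \mu_0 L}
\leq -\frac{\mu_0\bigl(F(x_t)-F^*\bigr)}{\delta(c/2)\delta(5c)}\ ,
\end{align*}
where the final inequality applies your Taylor lower bound together with convexity to the whole bracketed expression, so the $\mu_0 L$ term cancels instead of needing a separate bound. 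This rescaling of the test point is exactly the content of the paper's \Cref{lem:quadratic-superscript-stuff}. With that repair, your Step 3 unrolling gives the stated guarantee once you take $\mu_0 = c/(2B)$, exactly as the paper does by plugging $\gamma=c/(2B)$ into the constrained minimum (both arguments implicitly rely on $\nrm{x^*-x_t}\lesssim B$ for feasibility).
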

We have now seen how to turn approximate solutions of constrained quadratic problems into an approximate minimizer of the overall objective. We next need to ensure that the output of our method \cqs{} (\Cref{alg:constrained-quadratic}) meets the conditions of \Cref{lem:qsc-newton}. As previously discussed, \citet{woodworth2020local} showed that first-order methods can very accurately optimize \emph{unconstrained} quadratic objectives using a single round of communication; however, here we need to optimize a quadratic problem subject to a norm constraint. Our constrained quadratic solver is thus based on the following idea: the minimizer of the \emph{constrained} problem $\min_{x:\nrm{x} \leq c} Q(x)$ is the same as the minimizer of the \emph{unconstrained} problem $\min_{x} Q(x) + \frac{\lambda^*}{2}\nrm{x}^2$ for some problem-dependent regularization parameter $\lambda^*$. While the algorithm does not know  what $\lambda^*$ should be a priori, we show that it can be found with sufficient confidence using binary search. \Cref{lem:constrained-quadratic-correctness}, proven in \Cref{app:constrained-quadratic}, provides the relevant guarantees.

\begin{algorithm}[H]
\caption{\cqs{}$(x)$}
\label{alg:constrained-quadratic}
\begin{algorithmic}
\STATE \hspace{-1em}(Operating on objective $F(\cdot)$ with stochastic gradient $g(\cdot;\cdot)$ and Hessian-vector product $h(\cdot;\cdot,\cdot)$ oracles.)
\REQUIRE $x \in \R^d$. \renewcommand{\algorithmicrequire}{\textbf{Hyperparameters:}}
\REQUIRE $\rr$: trust-region radius; $\bar{\xi}$: local stability; $\bar{\lambda}$: regularization bound; $N$: binary search iterations; and $C$: reg. quadratic repetitions (see \Cref{tab:hyperparams}).
\ENSURE Approximate solution to $\min_{u:\norm{u}\leq \frac{1}{2}\rr} \frac{\bar{\xi}}{2}u^\top \nabla^2 F(x) u + \nabla F(x)^\top u$ \hfill $\triangleright$ See \Cref{lem:constrained-quadratic-correctness}
\STATE $\Lambda_1 = \crl*{\bar{\lambda}\prn*{\frac{3}{2}}^{n-1}\,:\, n = 1,\dots,N}$
\STATE $i \gets 1$
\WHILE{$\Lambda_i \neq \varnothing$}
\STATE $\lambda^{(i)} = \textrm{Median}(\Lambda_i)$
\FOR{$c = 1,\dots,C$}
\STATE $\tilde{u}^{(i,c)} = \textrm{\ossgd{}}(x, \lambda^{(i)})$
\ENDFOR
\IF{$\abs*{\crl*{\tilde{u}^{(i,c)} : \nrm*{\tilde{u}^{(i,c)}}\in \brk*{\frac{3}{2}\rr,\,\frac{7}{2}\rr}}} > \frac{C}{2}$}
\STATE $\tilde{u} = \textrm{\ossgd{}}(x,\lambda^{(i)})$
\STATE \label{algline:lambda-accepted}\textbf{Return:} $\hat{u} = \min\crl*{1,\,\frac{5\rr}{\nrm{\tilde{u}}}}\tilde{u}$
\ELSIF{$\abs*{\crl*{\tilde{u}^{(i,c)} : \nrm*{\tilde{u}^{(i,c)}} \leq \frac{5}{2}\rr}} > \frac{C}{2}$}
\STATE \label{algline:lower} $\Lambda_{i+1} = \crl*{\lambda' \in \Lambda_i\,:\, \lambda' < \lambda^{(i)}}$
\ELSIF{$\abs*{\crl*{\tilde{u}^{(i,c)} : \nrm*{\tilde{u}^{(i,c)}} > \frac{5}{2}\rr}} > \frac{C}{2}$}
\STATE \label{algline:higher} $\Lambda_{i+1} = \crl*{\lambda' \in \Lambda_i\,:\, \lambda' > \lambda^{(i)}}$
\ELSE
\STATE \label{algline:return-0} \textbf{Return:} $\hat{u} = 0$
\ENDIF
\STATE $i \gets i+1$
\ENDWHILE
\STATE $\tilde{u} = \textrm{\ossgd{}}(x, \lmin)$
\STATE \label{algline:use-lmin} \textbf{Return:} $\hat{u} = \min\crl*{1,\,\frac{5\rr}{\nrm{\tilde{u}}}}\tilde{u}$
\end{algorithmic}
\end{algorithm}

\begin{restatable}{lemma}{cqscorrectness}\label{lem:constrained-quadratic-correctness}
Let $F$ satisfy Assumption~\ref{assump:a2}, let $x$ be as input to \cqs{} (\Cref{alg:constrained-quadratic}), let $\bar{\xi}$ be as in \Cref{tab:hyperparams}, define 
\begin{equation*}
    Q(u) \defeq \frac{\bar{\xi}}{2}u^\top \nabla^2 F(x) u + \nabla F(x)^\top u,\qquad Q_\lambda(u) \defeq \frac{1}{2}u^\top (\bar{\xi}\nabla^2 F(x) + \lambda \boldI) u + \nabla F(x)^\top u,
\end{equation*}
\begin{equation*}
    u_\lambda^* \defeq \arg\min_u Q_\lambda(u),\qquad r^*(\lambda) \defeq \nrm{u_\lambda^*},
\end{equation*}
and let $\lambda_r$ denote, for any $r>0$, the value such that $r^*(\lambda_r) = r$. Let $\hat{u}$ be the output of \Cref{alg:constrained-quadratic} for hyperparameters $\rr$, $\bar{\xi}$, $\bar{\lambda}$, $N$ and $C$ as in \Cref{tab:hyperparams}, and suppose the output $\tilde{u}_\lambda$ of \ossgd{}$(x,\lambda)$ satisfies for all $\lambda \geq \lmin$ that
\[
\E Q_\lambda(\tilde{u}_\lambda) - \min_u Q_\lambda(u) \leq \epsilon(\lambda) \defeq \frac{\lambda(r^*(\lambda)^2 + \rr^2)}{800}\ .
\]
Then $\nrm{\hat{u}} \leq 5\rr$ and
\[
\E Q(\hat{u}) - \min_{u:\nrm{u}\leq\frac{1}{2}\rr} Q(u) \leq \frac{3}{4}\prn*{Q(0) - \min_{u:\nrm{u}\leq\frac{1}{2}\rr} Q(u)} + \epsilon(\lambda_{4\rr}) + \frac{\lmin \rr^2}{4}\ .
\]
\end{restatable}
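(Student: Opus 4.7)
The plan is to prove the lemma in three stages: upgrade the oracle's expected-objective guarantee to a per-run, high-probability bound on $\|\tilde u_\lambda - u_\lambda^*\|$; analyze the binary search to show the correct regularization regime is identified; and then convert the regularized-problem solve into the stated constrained-problem guarantee via Lagrangian duality.

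For the first stage, since $Q_\lambda$ is $\lambda$-strongly convex, the oracle assumption immediately gives
\[
\E\,\|\tilde u_\lambda - u_\lambda^*\|^2 \;\leq\; \frac{2\epsilon(\lambda)}{\lambda} \;=\; \frac{r^*(\lambda)^2 + \bar r^2}{400},
\]
so a single application of Markov yields $\|\tilde u_\lambda - u_\lambda^*\| \leq \tfrac{1}{10}\sqrt{r^*(\lambda)^2 + \bar r^2}$ with probability at least $3/4$. A Chernoff bound over the $C$ independent repetitions controls the event that strictly more than $C/2$ of them satisfy this distance bound, and by the choice of $C$ in \Cref{tab:hyperparams} a union bound over the $\leq\lceil\log_2 N\rceil$ iterations of binary search makes the total failure probability small enough that its contribution to $\E Q(\hat u)$ is absorbed by the error terms on the right-hand side of the claim.

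The second stage uses that $\lambda \mapsto r^*(\lambda) = \|(\bar\xi\nabla^2 F(x) + \lambda I)^{-1}\nabla F(x)\|$ is continuous and strictly decreasing. Conditional on the concentration event, if more than $C/2$ of the $\tilde u^{(i,c)}$ have norm at most $\tfrac{5}{2}\bar r$, the triangle inequality forces $r^*(\lambda^{(i)}) \leq \tfrac{5}{2}\bar r + \tfrac{1}{10}\sqrt{r^*(\lambda^{(i)})^2 + \bar r^2}$, which resolves to $r^*(\lambda^{(i)})$ below an explicit constant multiple of $\bar r$; by monotonicity every $\lambda' > \lambda^{(i)}$ in $\Lambda_i$ then has $r^*(\lambda') < r^*(\lambda^{(i)})$ and can safely be discarded from $\Lambda_{i+1}$ without removing $\lambda_{4\bar r}$ from the candidate pool. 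The symmetric argument handles the ``$>\tfrac{5}{2}\bar r$'' branch. The loop therefore terminates in one of three cases: (a) an accepting $\lambda^{(i)}$ for which $r^*(\lambda^{(i)})$ is pinned in a constant window around $4\bar r$, (b) a ``return $0$'' at a $\lambda^{(i)}$ with $r^*(\lambda^{(i)})$ pinched near $\tfrac{5}{2}\bar r$, or (c) exhaustion of $\Lambda_1$, which by the sizing of $N$ in \Cref{tab:hyperparams} can only happen when $\lambda_{4\bar r} < \lmin$.

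For the third stage I would use the Lagrangian identity $Q_\lambda(u_\lambda^*) \leq Q(u) + \tfrac{\lambda}{2}\|u\|^2$ applied with $u = \arg\min_{\|v\|\leq \bar r/2} Q(v)$, combined with $Q(\tilde u) \leq Q_\lambda(\tilde u) \leq Q_\lambda(u_\lambda^*) + \epsilon(\lambda)$, to convert Stage~1's distance concentration into an objective bound. In branch (a), the clipping step $\hat u = \min\{1,\,5\bar r/\|\tilde u\|\}\tilde u$ either preserves $\tilde u$ or rescales it toward $0$ along the same ray, so a short convex-combination calculation on the quadratic $Q$ bounds $Q(\hat u)$ in terms of $Q(\tilde u)$ and $Q(0)$, and the control on $r^*(\lambda^{(i)})$ from Stage~2 lets us relate $\epsilon(\lambda^{(i)})$ to $\epsilon(\lambda_{4\bar r})$ up to constants, yielding the $\tfrac{3}{4}$-contraction. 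In branch (b), $\hat u = 0$ is valid because the pinched norm $r^*(\lambda^{(i)}) \approx \tfrac{5}{2}\bar r$ together with the scale of $\lambda^{(i)}$ forces $Q(0) - \min_{\|v\|\leq \bar r/2} Q(v)$ to be controlled by the $\lmin\bar r^2/4$ slack; and in branch (c), the $\lmin$-regularized fallback call followed by clipping produces an output whose excess cost over $\min_{\|v\|\leq \bar r/2}Q(v)$ is bounded by $\lmin\bar r^2/4$ via the same Lagrangian identity. The main obstacle is branch (a): carefully tracking constants through the clipping distortion, the radius mismatch between the binary-search target ($\approx 4\bar r$) and the stated ball ($\bar r/2$), and the regularized-to-constrained conversion, so that they combine precisely into the stated $\tfrac{3}{4}$ contraction factor with residual $\epsilon(\lambda_{4\bar r}) + \lmin\bar r^2/4$.
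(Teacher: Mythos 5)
Your overall architecture matches the paper's: concentrate the $C$ repetitions via Markov and Chernoff and union-bound over the $\lceil\log_2 N\rceil$ rounds of the search; use the monotonicity of $\lambda\mapsto r^*(\lambda)$ to show the binary search keeps the right $\lambda$; and convert the regularized solve to a constrained bound via the Lagrangian/strong-convexity identity. Stages~1 and~3 are essentially the paper's Lemmas on concentration of the $\tilde u^{(i,c)}$, on the equivalence of the constrained and regularized problems (\Cref{lem:exists-perfect-regularizer}, \Cref{lem:bigger-r-okay}), and on the clipping step (\Cref{lem:projection-okay}).

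However, your treatment of the ``return $0$'' branch is a genuine gap. You assert that when the algorithm returns $0$, the pinched norm $r^*(\lambda^{(i)})\approx\tfrac{5}{2}\rr$ together with the scale of $\lambda^{(i)}$ forces $Q(0)-\min_{\|v\|\leq\rr/2}Q(v)$ to be controlled by the $\lmin\rr^2/4$ slack. That cannot be right: $Q(0)-\min_{\|v\|\leq\rr/2}Q(v)$ is an initial suboptimality gap that scales with $\|\nabla F(x)\|\rr$ and can be arbitrarily large, which is exactly why the lemma's conclusion retains a $\tfrac34\bigl(Q(0)-\min Q\bigr)$ term on the right-hand side rather than an absolute error. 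Moreover, when $0$ is returned the empirical counts did \emph{not} concentrate, so you have no license to infer anything about $r^*(\lambda^{(i)})$. The correct handling (as in the paper) is: conditionally on the good concentration event $E$, the ``return $0$'' line is provably never reached, because for any $\lambda^{(i)}$ at least one of the three count conditions holds with high probability; when $E$ fails, one only uses that the returned point (whatever branch it came from) satisfies $\|\hat u\|\leq 5\rr$, giving a crude bound $Q(\hat u)\leq Q(0)+\tfrac{25\delta(5\rr)\|\nabla^2F(x)\|\rr^2}{2}+5\rr\|\nabla F(x)\|$, and the choice of $C$ makes $\P(\lnot E)$ small enough to absorb this weighted by $\P(\lnot E)$.

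Relatedly, you attribute the $\tfrac34$ contraction to the branch (a) clipping/duality calculation, but the clipping analysis (\Cref{lem:projection-okay}) in fact yields a $\tfrac12$ factor. The remaining $\tfrac14$ comes precisely from $\P(\lnot E)\leq\tfrac14$ multiplying the worst-case gap $Q(0)-\min Q$ in the failure branch. Treating this as part of the accuracy bookkeeping rather than the per-round contraction is what lets the $\lmin\rr^2/4$ and $\epsilon(\lambda_{4\rr})$ terms close.
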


We now show that using one-shot averaging \citep{zinkevich2010parallelized, zhang2012communication} with $M$ machines---i.e.,~averaging the results of $M$ independent runs of SGD---suffices to solve each quadratic problem to the desired accuracy. 
The following lemma, which we prove in \Cref{app:proof-sgd-with-multiplicative-noise}, establishes that \ossgd{} (\Cref{alg:one-shot-averaging}) supplies \Cref{alg:constrained-quadratic} with an output $\hat{u}$ that satisfies the conditions of \Cref{lem:constrained-quadratic-correctness}. 
\begin{restatable}{lemma}{sgdwithmultnoise}\label{lem:sgd-with-multiplicative-noise}
Let $F$ satisfy Assumption~\ref{assump:a2}, let $x \in \R^d$, $\lambda \in \R^+$ be as input to \ossgd{} (\Cref{alg:one-shot-averaging}), let $Q_\lambda(u) = \frac{1}{2}u^\top (\bar{\xi}\nabla^2 F(x)+\lambda \boldI) u + \nabla F(x)^\top u$, let $Q_\lambda^*\defeq \min_u Q_\lambda(u)$, let $u^* \defeq \argmin_u Q_\lambda(u)$, and let stochastic first-order and stochastic Hessian-vector product oracles for $F$, as defined in Assumptions \ref{assump:a1} and \ref{assump:a2}, respectively, be available for each call to \rqga{} (\Cref{alg:quad-grad-access}), for either Case 1 \emph{(\texttt{Different-Samples})} or Case 2 \emph{(\texttt{Same-Sample})}.
Let $\hat{u}$, as output by \Cref{alg:one-shot-averaging}, be a weighted average of the iterates of $M$ independent runs of SGD with stepsizes $\eta_0(\lambda),\dots,\eta_{K-1}(\lambda)$, i.e.,
\[
\hat{u} = \frac{1}{M\sum_{k=0}^{K-1}w_k}\sum_{m=1}^M\sum_{k={}0}^{K-1}w_k u_k^m.
\]
Then, for both Cases 1 and 2, 
\[
\E Q_\lambda(\hat{u}) - Q_\lambda^* \leq 
\begin{cases}
2\max\crl*{\bar{\xi}H+\lambda, \frac{\rho^2}{\lambda}}\nrm*{u^*}^2\min\crl*{\frac{1}{K},\,\exp\prn*{-\frac{K+1}{4}\min\crl*{\frac{\lambda}{\bar{\xi}H+\lambda},\ \frac{\lambda^2}{\rho^2}}}} + \frac{2(\sigma^2 + \rho^2\nrm*{u^*}^2)}{\lambda MK} \\[0.5em] \hfill \textrm{if } K \leq \frac{2}{\lambda}\max\crl*{\bar{\xi}H+\lambda,\, \frac{\rho^2}{\lambda}} \\
\\
96\lambda\nrm*{u^*}^2\exp\prn*{ -\frac{K}{8}\min\crl*{\frac{\lambda}{\bar{\xi}H+\lambda},\ \frac{\lambda^2}{\rho^2}}} + \frac{96(\sigma^2 + \rho^2\nrm*{u^*}^2)}{\lambda MK}
\hfill\textrm{ if } K > \frac{2}{\lambda}\max\crl*{\bar{\xi}H+\lambda,\, \frac{\rho^2}{\lambda}}.
\end{cases}
\]
\end{restatable}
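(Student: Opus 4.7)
The proof breaks into three pieces: (i) a per-machine one-step SGD recursion exploiting the quadratic structure of $Q_\lambda$, (ii) a per-machine weighted averaging argument that splits into the two regimes of $K$, and (iii) a bias–variance decomposition across the $M$ machines that yields the $1/M$ factor on the noise term.

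\textbf{Step 1 (Per-machine one-step descent).} On machine $m$ the update is $u_{k+1}^m = u_k^m - \eta_k \tilde g_k^m$, where $\tilde g_k^m$ is assembled from $g(x;z)$ and $\bar\xi\, h(x,u_k^m;z')$ via \rqga{}. Under either Case 1 or Case 2 (\texttt{Same-Sample} only introduces a cross term bounded by Young's inequality) we have $\E[\tilde g_k^m \mid u_k^m] = \nabla Q_\lambda(u_k^m)$ and conditional variance $\E\|\tilde g_k^m - \nabla Q_\lambda(u_k^m)\|^2 \lesssim \sigma^2 + \rho^2 \|u_k^m\|^2$; the factor $\bar\xi = e^{\alpha\bar r}$ is a constant since $\bar r \leq 1/(5\alpha)$. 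Expanding $\E\|u_{k+1}^m - u^*\|^2$ and using the quadratic identities $\langle \nabla Q_\lambda(u),\, u-u^*\rangle = 2(Q_\lambda(u)-Q_\lambda^*)$ and $\|\nabla Q_\lambda(u)\|^2 \leq 2(\bar\xi H+\lambda)(Q_\lambda(u)-Q_\lambda^*)$, then applying $\|u_k^m\|^2 \leq 2\|u_k^m-u^*\|^2 + 2\|u^*\|^2$ and the stepsize condition $\eta_\lambda \leq \tfrac12\min\{1/(\bar\xi H+\lambda),\,\lambda/\rho^2\}$ to absorb the multiplicative noise into the strong-convexity contraction, gives the core recursion
\[
\E\|u_{k+1}^m - u^*\|^2 \;\leq\; (1 - \lambda\eta_k + \eta_k^2\rho^2)\,\|u_k^m - u^*\|^2 \;-\; \Omega(\eta_k)\,(Q_\lambda(u_k^m) - Q_\lambda^*) \;+\; \eta_k^2(\sigma^2 + \rho^2\|u^*\|^2).
\]

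\textbf{Step 2 (Per-machine weighted averaging, two regimes).} In Case 1, $K \leq (2/\lambda)\max\{\bar\xi H + \lambda,\rho^2/\lambda\}$, take $\eta_k \equiv \eta_\lambda$ and geometric weights $w_k = (1-\lambda\eta_\lambda + \eta_\lambda^2\rho^2)^{-k-1}$ so that the contraction factor telescopes out; solving for $\sum_k w_k (Q_\lambda(u_k^m) - Q_\lambda^*)$ and using convexity for the weighted average $\bar u^m$ produces a bias-like piece $\lesssim \max\{\bar\xi H+\lambda,\rho^2/\lambda\}\|u^*\|^2 \min\{1/K,\, e^{-(K+1)c}\}$ with $c = \tfrac14\min\{\lambda/(\bar\xi H+\lambda),\lambda^2/\rho^2\}$, plus an additive noise piece $\lesssim (\sigma^2 + \rho^2\|u^*\|^2)/(\lambda K)$. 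In Case 2, $K > (2/\lambda)\max\{\bar\xi H + \lambda,\rho^2/\lambda\}$, use a two-phase schedule: constant stepsize $\eta_\lambda$ with $w_k = 0$ for $k < K/2$ drives $\E\|u_{K/2}^m - u^*\|^2$ exponentially close to the noise floor, and then for $k\ge K/2$ the shrinking stepsize $\eta_k = 4/(\lambda(\text{offset}+k-K/2))$ with linear weights $w_k = \text{offset}+k-K/2-1$ yields by a Stich-type telescoping the $(\sigma^2+\rho^2\|u^*\|^2)/(\lambda K)$ noise rate, combined with the exponential residual from phase one.

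\textbf{Step 3 (One-shot averaging across the $M$ machines).} Since $\bar u^1,\dots,\bar u^M$ are i.i.d.\ and $Q_\lambda$ is quadratic with Hessian $A = \bar\xi \nabla^2 F(x) + \lambda I$, expanding $\|\hat u - u^*\|_A^2$ with $\hat u = \frac{1}{M}\sum_m \bar u^m$ and taking expectations gives the bias–variance split
\[
\E\!\bigl(Q_\lambda(\hat u) - Q_\lambda^*\bigr) \;=\; \tfrac12\,\|\E \bar u^1 - u^*\|_A^2 \;+\; \tfrac{1}{2M}\,\E\|\bar u^1 - \E\bar u^1\|_A^2.
\]
The bias $\|\E\bar u^1 - u^*\|_A^2$ is what deterministic GD on $Q_\lambda$ produces (noise has zero mean) and inherits the exponential/$1/K$ rate from Step 2 with no $M$ improvement, while the centered variance is bounded by $2\,\E(Q_\lambda(\bar u^1) - Q_\lambda^*)$ and so is controlled by the noise term of Step 2 multiplied by $1/M$, delivering the $(\sigma^2 + \rho^2\|u^*\|^2)/(\lambda M K)$ contribution in both cases.

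\textbf{Main obstacle.} The central difficulty is the multiplicative-noise term $\rho^2\|u_k^m\|^2$, which couples to the iterate and in general prevents the clean additive-noise SGD analysis. The choice $\eta_\lambda \leq \lambda/(2\rho^2)$ is engineered precisely so that $\eta_k^2\rho^2 \cdot 2\|u_k^m - u^*\|^2$ is dominated by the $\lambda\eta_k\|u_k^m - u^*\|^2$ term from strong convexity; what remains is a benign $\eta_k^2\rho^2\|u^*\|^2$ piece that behaves as additive noise and threads unobtrusively through the weighted averaging of Step 2 and the variance decomposition of Step 3. Everything else is routine bookkeeping of constants.
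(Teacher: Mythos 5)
Your proposal is correct, but it takes a genuinely different route from the paper's proof, and the difference is worth noting. The paper analyzes the \emph{averaged iterate sequence} $\bar{u}_k = \frac{1}{M}\sum_m u_k^m$ directly: since $\bar{g}_k = \frac{1}{M}\sum_m \gamma(u_k^m;\ldots)$ is an average of $M$ conditionally independent estimators, its variance is $\frac{2\sigma^2}{M} + \frac{2\rho^2}{M^2}\sum_m \|u_k^m\|^2$, so the $1/M$ factor appears immediately in the one-step recursion. The price is that the multiplicative-noise term $\frac{1}{M^2}\sum_m \|u_k^m\|^2$ is per-machine, not a function of $\bar{u}_k$; the paper handles this with a \emph{per-step} bias--variance identity on the iterates, $\frac{1}{M^2}\sum_m \E\|u_k^m\|^2 = \E\|\bar{u}_k - \E\bar{u}_k\|^2 + \frac{1}{M}\|\E\bar{u}_k\|^2$, plus the observation that the deterministic trajectory $\E\bar{u}_k$ is non-expansive so $\|\E\bar{u}_k - u^*\| \leq \|u^*\|$. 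Your route instead runs the entire Stich-style weighted-averaging analysis \emph{per machine} (Steps 1--2) and only introduces the $M$-averaging at the very end, via the exact quadratic identity $Q_\lambda(u) - Q_\lambda^* = \tfrac12\|u-u^*\|_A^2$ and the i.i.d.\ decomposition $\E\|\hat{u}-u^*\|_A^2 = \|\E\bar{u}^1 - u^*\|_A^2 + \tfrac1M\E\|\bar{u}^1 - \E\bar{u}^1\|_A^2$. What your route buys is conceptual separation: the $1/M$-less ``bias'' piece is literally what noiseless gradient descent with the same weights achieves, and everything else is damped by $1/M$. What the paper's route buys is that the $1/M$ is installed at the level of each gradient step, so there is no need to argue separately that the bias decays at the right rate.

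One point in your write-up is stated a little too casually and deserves care if you flesh this out: in Step 3 you must bound the bias term $\|\E\bar{u}^1 - u^*\|_A^2$ by re-running the telescoping of Step 2 with $\sigma=\rho=0$ (which works because the expectation of the SGD trajectory is the deterministic GD trajectory and the paper's weights $w_k = (1-\lambda\eta_\lambda+\eta_\lambda^2\rho^2)^{-k-1}$ still telescope for the strictly smaller contraction factor $1-\eta_\lambda\lambda$), rather than by Jensen against $\E[Q_\lambda(\bar u^1)] - Q_\lambda^*$; the latter would re-introduce the noise floor into the bias term without a $1/M$ and give a strictly weaker bound than the lemma. You clearly intend the former --- you write ``what deterministic GD on $Q_\lambda$ produces'' --- but it is precisely the step that is load-bearing and easy to get wrong.
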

\begin{algorithm}[H]
\caption{\ossgd{}$(x,\lambda)$}
\label{alg:one-shot-averaging}
\begin{algorithmic}
\STATE \hspace{-1em}(Operating on objective $F(\cdot)$ with stochastic gradient $g(\cdot;\cdot)$ and Hessian-vector product $h(\cdot;\cdot,\cdot)$ oracles.)
\REQUIRE $x\in \R^d$, $\lambda \in \R^+$.\\
\renewcommand{\algorithmicrequire}{\textbf{Hyperparameters:}}
\REQUIRE $\beta$: momentum; $\bar{\xi}$: local stability; and parameter functions $\eta_k(\lambda)$, $w_k(\lambda)$ (see \Cref{tab:hyperparams,tab:paramfuncs}).
\ENSURE Approximate solution to $\min_u Q_\lambda(u) = \frac{1}{2}u^\top (\bar{\xi}\nabla^2 F(x)+\lambda \boldI) u + \nabla F(x)^\top u$\hfill $\triangleright$ See \Cref{lem:sgd-with-multiplicative-noise}
\STATE Initialize: $u_0^1,\dots,u_0^M = 0$ \hfill $\triangleright$ Initial iterates on each machine
\FOR{Each machine $m=1,\dots,M$ in parallel}
\FOR{$k = 0,\dots,K-1$}
\STATE $\gamma(u_{k}^m;z_k^m,z_k^{'m}) =$ \rqga{}$(x,u_k^m,\lambda)$
\STATE $u_{k+1}^m = u_k^m - \eta_k(\lambda) \gamma(u_{k}^m;z_k^m,z_k^{'m}) + \indicator{k>0}\beta(u_k^m - u_{k-1}^m)$\ \footnotemark
\ENDFOR
\ENDFOR
\STATE \textbf{Return:} $\tilde{u} = \frac{1}{M\sum_{k=1}^K w_k(\lambda)}\sum_{m=1}^M\sum_{k=1}^K w_k(\lambda) u_k^m$
\end{algorithmic}
\end{algorithm}
\footnotetext{We add heavy-ball/Polyak momentum in this step with momentum parameter $\beta$. Our theoretical results do not require any momentum, and thus \fedsn{} is analyzed for $\beta=0$. For our experiments we compare the algorithms both with and without momentum, i.e., $\beta=0$ or optimally tuned $\beta\in\{0.1, 0.3, 0.5, 0.7, 0.9\}$.}
\begin{algorithm}[H]
\caption{\rqga{}$(x,u,\lambda)$}
\label{alg:quad-grad-access}
\begin{algorithmic}
\STATE \hspace{-1em}(Operating on objective $F(\cdot)$ with stochastic gradient $g(\cdot;\cdot)$ and Hessian-vector product $h(\cdot;\cdot,\cdot)$ oracles.)
\REQUIRE $x, u\in \R^d$, $\lambda \in \R^+$.\\
\renewcommand{\algorithmicrequire}{\textbf{Hyperparameters:}}
\REQUIRE $\bar{\xi}$: local stability (see \Cref{tab:hyperparams}).
\ENSURE $\gamma(u;z,z')$ s.t. $\E_{z,z'}[\gamma(u;z,z')] = \nabla Q_\lambda(u)$ and $\E_{z,z'}\nrm*{\gamma(u;z,z')-\nabla Q_\lambda(u)}^2 \leq \sigma^2 + \rho^2\nrm*{u}^2$\vspace{0.5em}
\STATE \textbf{Case 1:} \texttt{Different-Samples} ($z, z'$ drawn independently for each stochastic oracle)\vspace{-0.8em}\begin{itemize}
    \item Query the stochastic first-order oracle at $x$ (as in Assumption~\ref{assump:a1}(c)), so that the oracle draws $z\sim\mathcal{D}$, and observe $g(x;z)$\vspace{-0.8em}
    \item Query the stochastic Hessian-vector product oracle at $x$ and $u$ (as in Assumption~\ref{assump:a2}(b)), so that the oracle draws $z' \sim\mathcal{D}$, and observe $h(x,u;z')$
\end{itemize}
\STATE \textbf{Case 2:} \texttt{Same-Sample} (Same $z'=z$ used for both stochastic oracles)\vspace{-0.8em}\begin{itemize}
\item Query the stochastic first-order oracle at $x$ (as in Assumption~\ref{assump:a1}(c)), so that the oracle draws $z\sim\mathcal{D}$, and observe $g(x;z)$\vspace{-0.8em}
\item Query the stochastic Hessian-vector product oracle at $x$ and $u$ (as in Assumption~\ref{assump:a2}(b)) for $z'=z$, and observe $h(x,u;z')$
\end{itemize}
\STATE $\gamma(u;z,z') \defeq \bar{\xi} h(x,u;z') + \lambda u + g(x;z)$\vspace{0.5em}
\STATE \textbf{Return:} $\gamma(u;z,z')$
\end{algorithmic}
\end{algorithm}
Our analysis for \Cref{alg:one-shot-averaging} is based on ideas similar to those of \citet{woodworth2020local}, whereby the algorithm may access the stochastic oracles via \rqga{} (\Cref{alg:quad-grad-access}). However, additional care must be taken to account for the fact that \Cref{alg:quad-grad-access} supplies stochastic gradient estimates of the \emph{quadratic subproblems} $Q_\lambda(u)$ as per the oracles models described in Assumptions \ref{assump:a1} and \ref{assump:a2}. Thus, the estimates---based in part on stochastic Hessian-vector products---have variance that scales with the norm of the respective iterates of \Cref{alg:one-shot-averaging} (see Assumption \ref{assump:a2}(b.ii)).

We also note two possible cases for the oracle access: Case 1 (\texttt{Different-Samples}) in \Cref{alg:quad-grad-access} requires both a call to a stochastic first-order oracle (which draws $z\sim \mathcal{D}$) and a call to a stochastic Hessian-vector product oracle (which draws a different $z'\sim\mathcal{D}$); while Case 2 (\texttt{Same-Sample}) allows both stochastic estimators to be observed for the same random sample $z\sim \mathcal{D}$. These cases differ by only a small constant factor in the final convergence rate, and we base our practical method on this single sample model. We refer the reader to \Cref{sec:comp_cost} for additional discussion of these alternative settings. Finally, having analyzed 
Algorithms \ref{alg:stochastic-newton}, \ref{alg:constrained-quadratic}, \ref{alg:one-shot-averaging}, and \ref{alg:quad-grad-access}, 
we may put them all together to provide our main theoretical result, whose proof can be found in \Cref{app:combined-alg}.
\begin{restatable}{theorem}{combinedalg}\label{thm:combined-alg}
Let $F$ satisfy Assumption~\ref{assump:a2}.
Then, for $K \geq 175$ and $R \geq \tilde{\Omega}(1)$, and for hyperparameters $T$, $\beta$, $\rr$, $\bar{\xi}$, $\lmin$, $N$, $C$ and parameter functions $\eta_k(\lambda)$, $w_k(\lambda)$ as in \Cref{tab:hyperparams,tab:paramfuncs}, the output of \fedsn{} (\Cref{alg:stochastic-newton}) with initial point $x_0 \in \R^d$, using Algorithms \ref{alg:constrained-quadratic}, \ref{alg:one-shot-averaging}, and \ref{alg:quad-grad-access} (for both Cases 1 and 2) satisfies
\[
\E [F(x_T)] - F^* \leq HB^2\prn*{\exp\prn*{-\frac{R}{\tilde{O}\prn*{\alpha B}}} + \exp\prn*{ -\frac{K}{O(1)}}} + \tilde{O}\prn*{\frac{\sigma B}{\sqrt{MK}} + \frac{HB^2}{KR} + \frac{\rho B^2}{\sqrt{K}R}}\ ,
\]
where $\tilde{\Omega}$, $\tilde{O}$ hide terms logarithmic in $R$, $K$, and $\alpha B$.
\end{restatable}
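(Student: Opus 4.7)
The argument composes the three technical lemmas of this section bottom-up: each outer step of \fedsn{} invokes \cqs{} (Lemma~\ref{lem:constrained-quadratic-correctness}), which runs a binary search that calls \ossgd{} (Lemma~\ref{lem:sgd-with-multiplicative-noise}); the resulting subproblem accuracy is then plugged into the Newton-style analysis of Lemma~\ref{lem:qsc-newton}, using $\delta(r)=\exp(\alpha r)$ from Lemma~\ref{lem:qsc-sensitivity}. The proof thus reduces to checking that the hyperparameter choices of Tables~\ref{tab:hyperparams} and~\ref{tab:paramfuncs} satisfy each lemma's hypothesis, and then unpacking the final bound.

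I would first apply Lemma~\ref{lem:sgd-with-multiplicative-noise} and require, uniformly over $\lambda\ge\bar\lambda$, that its output meets the hypothesis $\mathbb{E} Q_\lambda(\tilde u_\lambda)-\min_u Q_\lambda(u)\le \epsilon(\lambda):=\lambda(r^*(\lambda)^2+\bar r^2)/800$ of Lemma~\ref{lem:constrained-quadratic-correctness}. Splitting by the $K$-vs-$\lambda$ regime, Lemma~\ref{lem:sgd-with-multiplicative-noise}'s bound decomposes into an optimization term that scales with $r^*(\lambda)^2$ and a stochastic term of order $(\sigma^2+\rho^2 r^*(\lambda)^2)/(\lambda MK)$. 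Matching each piece to $\epsilon(\lambda)$ and solving for $\lambda$ recovers exactly the six constituents of the $\max$ defining $\bar\lambda$: the two $H/K$ entries handle the $1/K$ branch of the optimization term, the log-weighted $H/K$ and $\rho/\sqrt K$ entries drive the exponential-in-$K$ branch below a small constant, and the $\sigma/(\bar r\sqrt{MK})$ and $\rho/\sqrt{MK}$ entries kill the stochastic term. With $\bar\lambda$ chosen this way, Lemma~\ref{lem:constrained-quadratic-correctness} yields $\|\updatet_t\|\le 5\bar r$ together with
\[
\mathbb{E} Q_t(\updatet_t)-\min_{\|u\|\le \bar r/2}Q_t(u) \;\le\; \tfrac{3}{4}\bigl(Q_t(0)-\min_{\|u\|\le \bar r/2}Q_t(u)\bigr) + \epsilon(\lambda_{4\bar r}) + \tfrac{\bar\lambda\bar r^2}{4}.
\]

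Since $\bar r\le 1/(5\alpha)$ by construction, both $\bar\xi=e^{\alpha\bar r}$ and $\delta(5\bar r)=e^{5\alpha\bar r}$ are $O(1)$, so $\bar\xi$ is a valid local-stability factor for the subproblem. Feeding the above into Lemma~\ref{lem:qsc-newton} with $c=\bar r$ and $\theta=3/4$ gives
\[
\mathbb{E} F(x_T)-F^* \;\le\; (F(x_0)-F^*)\exp\!\Bigl(-\tfrac{T\bar r}{O(B)}\Bigr) \;+\; O\!\bigl(B/\bar r\bigr)\bigl(\epsilon(\lambda_{4\bar r})+\bar\lambda\bar r^2\bigr).
\]
Each \cqs{} call consumes $\tilde O(NC)=\tilde O(1)$ rounds of communication, so Table~\ref{tab:hyperparams}'s choice of $T$ satisfies $T=\tilde\Theta(R)$. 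When $\bar r=1/(5\alpha)$ the first term becomes $HB^2\exp(-R/\tilde O(\alpha B))$; when $\bar r=32B\log(TK)/T$ the same term becomes a polynomial in $1/(TK)$ whose exponent is calibrated to produce $HB^2/(KR)$. Unpacking the residual $O(B\bar\lambda\bar r)$ term-by-term against the definition of $\bar\lambda$ produces the $\sigma B/\sqrt{MK}$, $\rho B^2/(\sqrt K R)$, and $HB^2/(KR)$ noise pieces; the exponential-in-$K$ contribution from Lemma~\ref{lem:sgd-with-multiplicative-noise} that cannot be fully absorbed into $\bar\lambda$ becomes the $HB^2\exp(-K/O(1))$ piece via $F(x_0)-F^*\le HB^2/2$.

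The principal difficulty is that the regime boundary $K \lessgtr (2/\lambda)\max\{\bar\xi H+\lambda,\rho^2/\lambda\}$ in Lemma~\ref{lem:sgd-with-multiplicative-noise} itself depends on $\lambda$, while the hypothesis of Lemma~\ref{lem:constrained-quadratic-correctness} must hold \emph{uniformly} for every $\lambda\ge\bar\lambda$ that the binary search may query; this is what forces $\bar\lambda$ to be the maximum of six distinct expressions, one per branch per noise source. A secondary bookkeeping issue is propagating the $\log(TK)$ factor in $\bar r$, the binary-search depth $N$, the $C$ repetitions inside \cqs{}, and the $\zeta$ factor in $T$ cleanly through to the final $\tilde O$/$\tilde\Omega$ constants, and verifying $T\ge 1$ under the assumption $R\ge\tilde\Omega(1)$.
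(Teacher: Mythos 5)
Your proposal matches the paper's proof: compose Lemma~\ref{lem:sgd-with-multiplicative-noise} into the hypothesis of Lemma~\ref{lem:constrained-quadratic-correctness} (with $\lmin$ chosen so the SGD error bound is at most $\epsilon(\lambda)$ uniformly over $\lambda\ge\lmin$), then apply Lemma~\ref{lem:qsc-newton} with $\theta=3/4$ and $\delta(r)=e^{\alpha r}$, and back out $T=\tilde{\Theta}(R)$ from the $C\lceil\log_2 N\rceil$ rounds consumed per call to \cqs{}. The pieces you defer as bookkeeping---the monotonicity analysis of $\lambda\mapsto\lambda\exp\!\bigl(-\tfrac{K}{8}\min\{\lambda/(eH+\lambda),\,\lambda^2/\rho^2\}\bigr)$ over $\Lambda_1$, and a Lambert-$W$ inversion to solve $T\log^2 T\lesssim R/\zeta$---are precisely the remaining computations the paper carries out.
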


% \vspace{-0.5em}
\section{Comparison with related methods and lower bounds}\label{sec:compare}

In this section, we compare our algorithm's guarantees with those of other, previously proposed first-order methods. A difficulty in making this comparison is determining the ``typical'' relative scale of the parameters $H$, $\sigma$, $Q$, $\alpha$,  and $\rho$. We therefore draw inspiration from training generalized linear models in considering a natural scaling of the parameters that arises when the objective has the form $F(x) = \E_{z}\ell(\inner{x}{z})$, where $\abs{\ell'}$, $\abs{\ell''}$, and $\abs{\ell'''}$ are $O(1)$; this holds, e.g., for logistic regression problems (see \eqref{eq:logistic-regression}).
In this case, upper bounds on the derivatives of $F$ will generally scale with the norm of the data, i.e., $\nrm{z}$. So if we assume that $\nrm{z} \leq D$ for some $D$, then the derivatives of $F$ would generally scale as $\nrm{\nabla F(x)} \lesssim D$, $\nrm{\nabla^2 F(x)}_{\textrm{op}} \lesssim D^2$, and $\nrm{\nabla^3 F(x)}_{\textrm{op}} \lesssim D^3$, where $\nrm*{\cdot}_{\textrm{op}}$ denotes the operator norm. Thus, in the rest of this section, we will take $H=D^2$, $\sigma = D$, $U = D^3$, $\alpha = D$, and $\rho = D^2$. It is, of course, possible that these parameters could have different relationships, but we focus on this regime for simplicity.

In addition to working within this natural scaling, we consider the case where we have access to a sufficient number of machines (i.e., $M \gtrsim \frac{KR^3}{D^2B^2}$) and where $K$ is sufficiently large. We explore various regimes both w.r.t.\ the number of rounds of communication $R$ (though we restrict ourselves to $R \gtrsim DB\log(DBK)$ for technical reasons), as well as w.r.t.\ $DB$, which roughly captures the ``size" of the problem.
Thus, ignoring constants and terms logarithmic in $R$, $K$, and $\alpha B$ for the sake of clarity, our upper bound from \Cref{thm:combined-alg} reduces to
\begin{equation}
\E F(\hat{x}) - F^* \lesssim D^2B^2\exp\prn*{-\frac{R}{DB}} + D^2B^2\exp\prn*{-K} + \frac{D^2B^2}{KR^{3/2}} + \frac{D^2B^2}{KR} + \frac{D^2B^2}{\sqrt{K}R} \approx \frac{D^2B^2}{\sqrt{K}R}\ .
\end{equation}
% \vspace{-0.8em}
\subsubsection*{Comparison with Local SGD}
Recall that, under third-order smoothness assumptions \citep{yuan2020federated}, Local SGD converges at a rate of
\begin{equation*}
    \E F(\hat{x}) - F^* \leq \tilde{O}\prn*{\frac{HB^2}{KR} + \frac{\sigma B}{\sqrt{MKR}} + \frac{Q^{1/3}\sigma^{2/3}B^{5/3}}{K^{1/3}R^{2/3}}}\ .
\end{equation*}
For the setting as outlined above, this bound reduces to 
\begin{equation*}
    \E F(\hat{x}) - F^* \lesssim \frac{D^2B^2}{KR} + \frac{D^2B^2}{KR^2} +  \frac{D^{5/3}B^{5/3}}{K^{1/3}R^{2/3}} \approx \frac{D^2B^2}{KR} + \frac{D^{5/3}B^{5/3}}{K^{1/3}R^{2/3}}\ .
\end{equation*}
In the case where $DB$ is not too large ($DB \lesssim K^2R$), the dominant term for Local SGD is $\frac{D^{5/3}B^{5/3}}{K^{1/3}R^{2/3}}$, and so we see that our algorithm improves upon Local SGD as long as $R\gtrsim \frac{D^{1/3}B^{1/3}}{K^{1/6}}$.

% \vspace{-0.8em}
\subsubsection*{Comparison with \fedac{}}
The previous best known first-order distributed method under third-order smoothness assumptions is \fedac{} \citep{yuan2020federated}, an accelerated variant of the Local SGD method, which, as we recall, achieves a guarantee of
\begin{equation*}
    \E F(\hat{x}) - F^* \leq \tilde{O}\prn*{\frac{HB^2}{KR^2} + \frac{\sigma B}{\sqrt{MKR}} + \frac{H^{1/3}\sigma^{2/3}B^{4/3}}{M^{1/3}K^{1/3}R} + \frac{Q^{1/3}\sigma^{2/3}B^{5/3}}{K^{1/3}R^{4/3}}}\ .
\end{equation*}

For the setting as outlined above, this bound reduces to
\begin{equation*}
		\E F(\hat{x}) - F^* \lesssim \frac{D^2B^2}{KR^2} + \frac{D^{5/3}B^{5/3}}{K^{1/3}R^{4/3}}\ .
\end{equation*}
In the case where $DB$ is not too large ($DB \lesssim K^2R^2$), the dominant term for \fedac{} is $\frac{D^{5/3}B^{5/3}}{K^{1/3}R^{4/3}}$, and so we see that our algorithm improves upon \fedac{} as long as $R\lesssim \frac{\sqrt{K}}{DB}$, whereas for $R \gtrsim \frac{\sqrt{K}}{DB}$, \fedac{} provides better guarantees than \fedsn{}.

% \vspace{-0.8em}
\subsubsection*{Comparison with min-max method under Assumption \ref{assump:a1}}
\cite{woodworth2021min} identified the min-max optimal (up to logarithmic factors) stochastic first-order method in the distributed setting we consider, and under Assumption \ref{assump:a1}.  Namely, a min-max optimal method can be obtained by combining Minibatch-Accelerated-SGD \citep{cotter2011better}, which enjoys a guarantee of
\begin{equation*}
    \E F(\hat{x}) - F^* \leq O\prn*{\frac{HB^2}{R^2} + \frac{\sigma B}{\sqrt{MKR}}}\ ,
\end{equation*}
with Single-Machine Accelerated SGD, which runs $KR$ steps of an accelerated variant of SGD known as AC-SA \citep{lan2012optimal}, and enjoys a guarantee of
\begin{equation*}
    \E F(\hat{x}) - F^* \leq O\prn*{\frac{HB^2}{K^2R^2} + \frac{\sigma B}{\sqrt{KR}}}\ .
\end{equation*}

Therefore, an algorithm which returns the output of Minibatch Accelerated SGD when $K \leq \frac{\sigma^2 R^3}{H^2B^2}$, and otherwise returns the output of Single-Machine Accelerated SGD, achieves a guarantee of
\begin{equation*}
    \E F(\hat{x}) - F^* \leq O\prn*{\frac{H^2B^2}{K^2R^2} + \frac{\sigma B}{\sqrt{MKR}} + \min\crl*{\frac{HB^2}{R^2}, \frac{\sigma B}{\sqrt{KR}}}}\ .
\end{equation*}
As shown by \cite{woodworth2021min}, this matches the lower bound for stochastic distributed first-order optimization under Assumption \ref{assump:a1}, up to $O(\log^2 M)$ factors.

For the setting as outlined above, this bound reduces to
\begin{equation*}
		\E F(\hat{x}) - F^* \lesssim \frac{D^2B^2}{K^2R^2} + \frac{D^2B^2}{KR^2} + \min\crl*{\frac{D^2B^2}{R^2}, \frac{DB}{\sqrt{KR}}} \approx \frac{D^2B^2}{K^2R^2} +  \min\crl*{\frac{D^2B^2}{R^2}, \frac{DB}{\sqrt{KR}}}\ .
\end{equation*}

Thus, for $DB \lesssim \frac{R^{3/2}}{\sqrt{K}}$, the dominant term is $\frac{D^2B^2}{R^2}$, in which case \fedsn{} is better as long as $R \lesssim \sqrt{K}$. Furthermore, for $\frac{R^{3/2}}{\sqrt{K}} \lesssim DB \lesssim K^{3/2}R^{3/2}$, the dominant term is $\frac{DB}{\sqrt{KR}}$, in which case \fedsn{} is better as long as $R \gtrsim D^2B^2$. In these regimes, we see that Assumption \ref{assump:a2} allows \fedsn{} to improve over the best possible when relying only on Assumption \ref{assump:a1}.  We also note that when $DB \gtrsim K^{3/2}R^{3/2}$, the combined algorithm described above is better than the guarantee we prove for \fedsn{}---we do not know whether this is a weakness of our analysis or represents a true deficiency of \fedsn{} in this regime.

% \vspace{-0.8em}
\subsubsection*{Comparison with first-order lower bounds}
\cite{woodworth2021min} additionally provide lower bounds under other smoothness conditions, including quasi-self-concordance, which are relevant to the current work. Roughly speaking, they show that under Assumption \ref{assump:a2}(a), no first-order intermittent communication algorithm can guarantee suboptimality less than (ignoring constant and $\log M$ factors)
\begin{equation*}
   \E F(\hat{x}) - F^* \geq 
   \frac{HB^2}{K^2R^2} + \frac{\sigma B}{\sqrt{MKR}} + \min\crl*{\frac{HB^2}{R^2},\frac{\alpha\sigma B^2}{\sqrt{K}R^2},\frac{\sigma B}{\sqrt{KR}}}\ .
\end{equation*}
In the same parameter regime as above, the lower bound reduces to
\begin{equation*}
\E F(\hat{x}) - F^* \gtrsim \frac{D^2B^2}{K^2R^2} + \min\crl*{\frac{D^2B^2}{\sqrt{K}R^2},\frac{DB}{\sqrt{KR}}}\ .
\end{equation*}
Comparing this lower bound with our guarantee in \Cref{thm:combined-alg}, we see that, when $DB = O(1)$ and the number of rounds of communication is small (e.g., $R = O(\log K)$), our approximate Newton method can (ignoring $\log K$ factors) achieve an upper bound of $\E F(\hat{x}) - F^* \lesssim 1/\sqrt{K}$.  Therefore, in this important regime, \fedsn{} matches the lower bound under Assumption \ref{assump:a2}(a), albeit using a stronger oracle---the stochastic Hessian-vector product oracle ensured by Assumption \ref{assump:a2}(b), rather than only a stochastic first-order oracle.  No prior work has been able to match this lower bound, and so we do not know whether such an oracle (or perhaps allowing multiple accesses to each component, which is an even stronger oracle) is necessary in order to achieve it, or if perhaps the stronger oracle even allows for breaking it.  Either way, we improve over all prior stochastic methods we are aware of in this setting, and at the very least match the best possible using any stochastic first-order method.

\section{Experiments}\label{sec:experiments}

We begin by presenting a more practical variant of \fedsn{} called \fedsnlite{} in \Cref{alg:stochastic-newton-lite}. It differs from \fedsn{} in two major ways: first, it directly uses \ossgd{} (\Cref{alg:one-shot-averaging}) (with $\bar{\xi} = 1$) as an approximate quadratic solver without requiring a search over the regularization parameter as in \Cref{alg:constrained-quadratic}; and in addition, it scales the Newton update with a stochastic approximation of the Newton decrement (see \cite{nesterov1998introductory,boyd2004convex}) and a constant stepsize $\nu$ (we use $\nu=1.25$ in our experiments). Because of these changes, the Newton update is fairly robust to the choice of $\nu$, so we then only need to tune the learning rate of \ossgd{}, making it as usable as any other first-order method.  
\begin{algorithm}[H]
   \caption{\fedsnlite{}$(x_0)$}
   \label{alg:stochastic-newton-lite}
\begin{algorithmic}
\STATE \hspace{-1em}(Operating on objective $F(\cdot)$ with stochastic gradient $g(\cdot;\cdot)$ and Hessian-vector product $h(\cdot;\cdot,\cdot)$ oracles.)
    \REQUIRE $x_0 \in \R^d$.
    \renewcommand{\algorithmicrequire}{\textbf{Hyperparameters:}}
    \REQUIRE $T$: main iterations; $\nu$: Newton stepsize scale; and $\beta$: momentum.
   \FOR{$t = 0,1,\dots,T-1$}
\STATE \ossgd{}$(x_t,0)$
   \STATE $\nu_t := \nu\left(1+(\updatet_t^\top h(x_t, \updatet_t;z))^{1/2}\right)^{-1}$\ \hfill $\triangleright\ h(x_t, \updatet_t;z)\ \text{is s.t.}\ \mathbb{E}_z[h(x_t, \updatet_t;z)] = \nabla^2F(x_t)\updatet_t$ \footnotemark
   \STATE Update: $x_{t+1} = x_t + \nu_t\updatet_t$
   \ENDFOR
   \STATE \textbf{Return:} $x_T$
\end{algorithmic}
\end{algorithm}
\footnotetext{We can calculate an approximation to the Newton decrement at $\updatet_t$, i.e., $(\updatet_t^\top \nabla^2 F(x_t)\updatet_t)^{1/2}$,  using a call to the Hessian-vector product oracle (see Assumption \ref{assump:a2}(b)), along with an additional dot product, namely $\updatet_t^\top h(x_t, \updatet_t;z)$.}

Note that in each step of \fedsnlite{}, the subroutine \ossgd{} approximately solves a quadratic subproblem using a variant of one-shot averaging. Moreover, we implement \fedsnlite{} such that each call to the stochastic oracle from within \ossgd{} uses only a single sample (Case 2 in \Cref{alg:quad-grad-access}), and so it is asymptotically as expensive as the gradient oracle (i.e., $O(d)$, if $d$ is the dimension of the problem). For a discussion of the computational cost of using a Hessian-vector product oracle, see \Cref{sec:comp_cost}.

\textbf{Baselines.} We compare \fedsnlite{} against the two variants of \fedac{} (\Cref{alg:fedac}, \cite{yuan2020federated}), Minibatch SGD (\Cref{alg:mbsgd}, \cite{dekel2012optimal}), and Local SGD (\Cref{alg:lsgd}, \cite{zinkevich2010parallelized}). We also study the effect of adding Polyak's momentum, which we denote by $\beta$, to these algorithms (see \Cref{sec:baselines}). \fedac{} is mainly presented and analyzed for strongly convex functions by \cite{yuan2020federated}. In fact, they assume the knowledge of the strong convexity constant to tune \fedac{}, which is typically hard to know unless the function is explicitly regularized. To handle general convex functions \cite{yuan2020federated} build some \textit{internal regularization} into \fedac{} (see Appendix E.2 in their paper). However, their hyperparameter recommendations in this setting also depend on unknowns such as the smoothness of the function and the variance of the stochastic gradients. This poses a difficulty in comparing \fedac{} to the other algorithms, which do not require the knowledge of these unknowns. 

The main goal of our experiments is to be comprehensive and to fairly compare with the baselines, giving especially \fedac{} the benefit of the doubt in light of the above difficulty in comparing against it. With this in mind we conduct the two experiments on the  \href{https://www.csie.ntu.edu.tw/~cjlin/libsvmtools/datasets/binary.html\#a9a}{LIBSVM a9a} \citep{CC01a, Dua:2019} dataset for binary classification using the logistic regression model.

\subsubsection*{Experiment 1: Adding internal regularization to \fedac{}}\label{subsec:experiment1} 

\begin{figure}
     \centering
     \begin{subfigure}{\textwidth}
         \centering
        \includegraphics[width=0.9\textwidth]{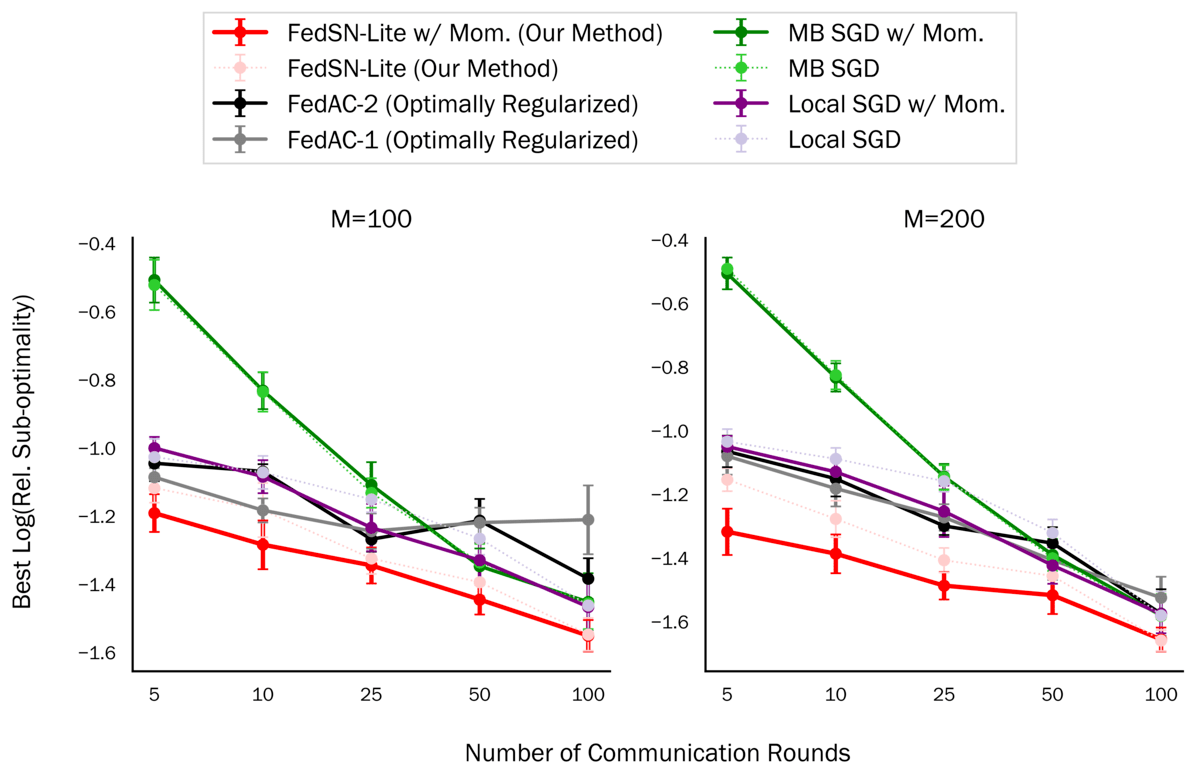}
        \caption{Each algorithm besides \fedac{} solves an unregularized ERM problem, and reports the best relative sub-optimality w.r.t.\ the optimal minimizer. All optimization runs were repeated $70$ times with the tuned hyperparameters.}
        \label{fig:a9a_grid_100_small}
     \end{subfigure}
     
     \begin{subfigure}{\textwidth}
         \centering
        \includegraphics[width=0.9\textwidth]{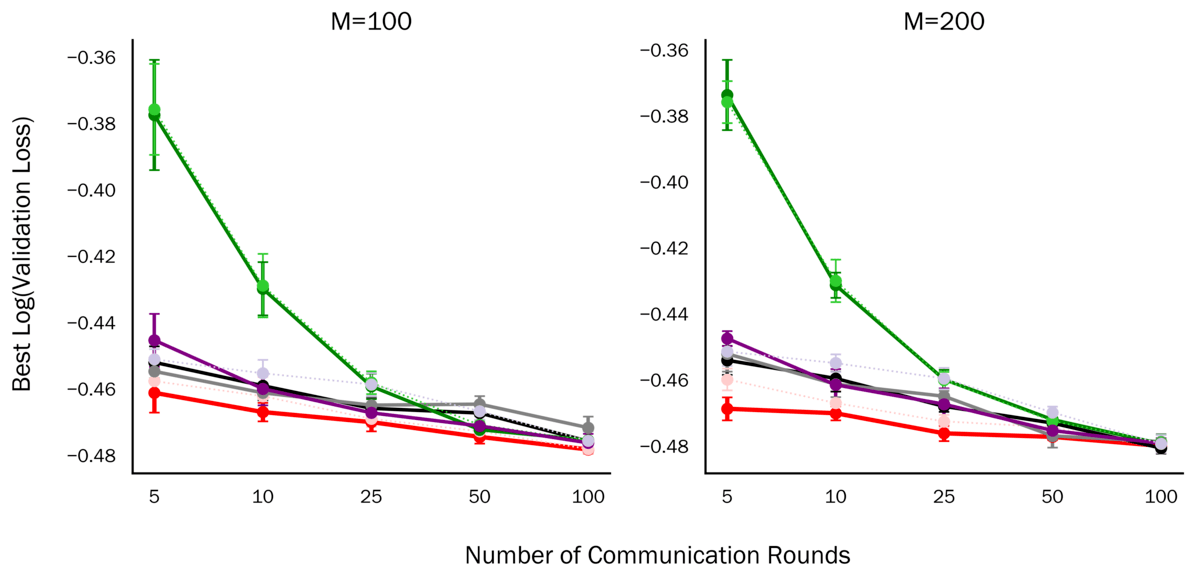}
        \caption{Each algorithm besides \fedac{} solves an unregularized ERM problem (with a single pass on the training data), and reports the best validation loss on a held-out dataset. We used $20{,}000$ samples for training, and validated on the remaining dataset. All optimization runs were repeated $20$ times with the tuned hyperparameters.}
        \label{fig:a9a_grid_100_test_small}
     \end{subfigure}
        
    \caption{\small Empirical comparison of \fedsnlite{} (\Cref{alg:stochastic-newton-lite}) to other methods (see \Cref{sec:baselines} for complete details) on the  \href{https://www.csie.ntu.edu.tw/~cjlin/libsvmtools/datasets/binary.html\#a9a}{LIBSVM a9a} \citep{CC01a, Dua:2019} dataset for minimizing:  (a) in-sample, and (b) out-of-sample unregularized logistic regression loss using $M \in \{100, 200\}$ machines. We vary the frequency of communication (horizontal axis of each plot), while keeping the total number of steps on each machine (theoretical parallel runtime) fixed at $KR=100$.  For \fedac{} we tune the learning rate for different levels of \textit{internal regularization} in~ $\{$1e-2, 1e-3, 1e-4, 1e-5, 1e-6$\}$. For the other algorithms we tune the learning rate for either $\beta=0$, i.e., without momentum, or for all $\beta\in \{0.1, 0.3, 0.5, 0.7, 0.9\}$, to choose the optimal level of momentum. We repeat this tuning procedure for each value of $M, R$, and thus each point in the plot represents an optimal configuration of that algorithm for that setting.}
\end{figure}

We take the more carefully optimized version of \fedac{} for strongly convex functions and tune its internal regularization and learning rate: (a) for minimizing the unregularized training loss (\Cref{fig:a9a_grid_100_small}), and (b) for minimizing the out-of-sample loss (\Cref{fig:a9a_grid_100_test_small}). This emulates the setting where the objective is assumed to be a general convex function, though \fedac{} sees a strongly convex function instead. Specifically, we are concerned with minimizing a convex function $F(x)$. In the first case (\Cref{fig:a9a_grid_100_small}) we are minimizing a finite sum, i.e., $F(x) = \sum_{i\in S} f_i(x)$ where $S$ is our training dataset. In the second case (\Cref{fig:a9a_grid_100_test_small}) $F(x)~= \mathbb{E}_{z\in \mathcal{D}}[f(x;z)]$, i.e., a stochastic optimization problem where we access the distribution $\mathcal{D}$ through our finite sample $S$. To estimate the true-error on $\mathcal{D}$, we split $S$ into $S_{train}$ and $S_{val}$, then sample without replacement from $S_{train}$ to train our models, and report the final performance on $S_{val}$.

\begin{figure}[H]
    \centering
    \hspace{-6em}
    \includegraphics[width=0.9\textwidth]{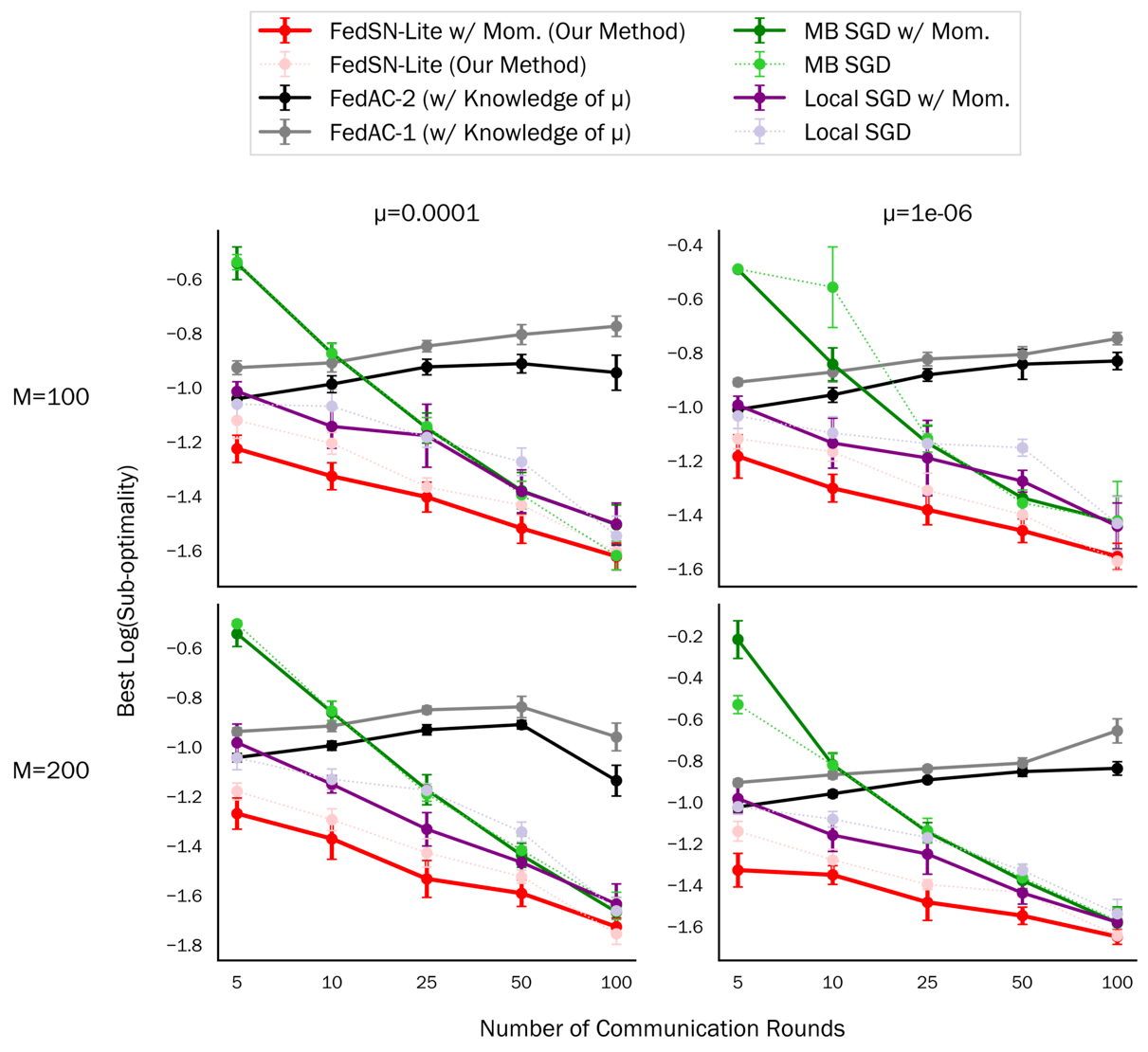}
    \caption{\small Empirical comparison of \fedsnlite{} (\Cref{alg:stochastic-newton-lite}) to other methods (see \Cref{sec:baselines} for complete details) on the  \href{https://www.csie.ntu.edu.tw/~cjlin/libsvmtools/datasets/binary.html\#a9a}{LIBSVM a9a} \citep{CC01a, Dua:2019} dataset for minimizing $\mu$-regularized logistic regression loss using $M \in \{100, 200\}$ machines and $\mu\in\{$1e-4, 1e-6$\}$ regularization strength. We vary the frequency of communication (horizontal axis of each plot), while keeping the total number of steps on each machine (theoretical parallel runtime) fixed at $KR=100$. Each algorithm besides solves a regularized ERM problem, and reports the best relative sub-optimality w.r.t.\ the optimal minimizer. For \fedac{} we use $\mu$ as the strong convexity constant to tune its hyperparameters. For the other algorithms we tune the learning rate for either $\beta=0$, i.e., without momentum, or for all $\beta\in \{0.1, 0.3, 0.5, 0.7, 0.9\}$, to choose the optimum level of momentum. We repeat this tuning procedure for each value of $\mu, M, R$, and thus each point in the plot represents an optimal configuration of that algorithm for that setting. All optimization runs were repeated $30$ times with the tuned hyperparameters.}
    \label{fig:a9a_grid_100_reg_small}
\end{figure}

\subsubsection*{Experiment 2: Optimizing regularized objectives}\label{subsec:experiment2}
We consider regularized empirical risk minimization (\Cref{fig:a9a_grid_100_reg_small}), where we provide \fedac{} the regularization strength $\mu$ which serves as its estimate of the strong convexity for the objective. Unlike Experiment 1, here we report regularized training loss, i.e., we train the models to optimize $F(x) + r(x) = \sum_{i\in S}f_i(x) + \frac{\mu}{2}\norm{x}_2^2$, for a finite dataset $S$. We also vary the regularization strength $\mu$, to understand the algorithms' dependence on the problem's conditioning. This was precisely the experiment conducted by \cite{yuan2020federated} (c.f., Figures 3 and 5 in their paper).

\textbf{Implementation details.} Both the experiments are performed over a wide range of communication rounds and machines, to mimic different systems trade-offs. We always search for the best learning rate $\eta \in \{0.0001,0.0002,$ $0.0005, 0.001, 0.002, 0.005, 0.01, 0.02, 0.05, 0.1, 0.2, 0.5, 1, 2, 5, 10, 20\}$ for every configuration of each algorithm (where we then set the parameter functions as $\eta(\lambda) = \eta$ and $w(\lambda) = \frac{1}{K}$). We verify that the optimal learning rates always lie in this range. More details can be found in \Cref{sec:implementation_details}.

\textbf{Observation.} In all our experiments we notice that \fedsnlite{} is either competitive with or outperforms the other baselines. This is especially true for the sparse communication settings, which are of most practical interest. A more comprehensive set of experiments can be found in \Cref{sec:add_exps}.

\section{Conclusion}
In this work, we have shown how to more efficiently optimize convex quasi-self-concordant objectives by leveraging parallel methods for quadratic problems. Our method can, in some parameter regimes, improve upon existing stochastic methods while maintaining a similar computational cost, and we have further seen how our method may provide empirical improvements in the low communication regime.  In order to do so we rely on stochastic Hessian-vector product access, instead of just stochastic gradient calculations.  In many situations stochastic Hessian-vector products can be calculated just as easily as stochastic gradients.  Furthermore, Hessian-vector products can be calculated to arbitrary accuracy using two stochastic gradient accesses {\em to the same component} (i.e., the same sample).  It remains open whether the same guarantees we achieve here can also be achieved using only independent stochastic gradients (a single stochastic gradient on each sample), or whether in the distributed stochastic setting access to Hessian-vector products is strictly more powerful than access to only independent stochastic gradients.

\paragraph{Acknowledgements.}
Research was partially supported by NSF-BSF award 1718970. BW is supported by a Google Research PhD Fellowship.

\bibliography{bibliography}

\begin{thebibliography}{44}
\providecommand{\natexlab}[1]{#1}
\providecommand{\url}[1]{\texttt{#1}}
\expandafter\ifx\csname urlstyle\endcsname\relax
  \providecommand{\doi}[1]{doi: #1}\else
  \providecommand{\doi}{doi: \begingroup \urlstyle{rm}\Url}\fi

\bibitem[Agarwal et~al.(2017)Agarwal, Allen-Zhu, Bullins, Hazan, and
  Ma]{agarwal2017finding}
Naman Agarwal, Zeyuan Allen-Zhu, Brian Bullins, Elad Hazan, and Tengyu Ma.
\newblock Finding approximate local minima faster than gradient descent.
\newblock In \emph{Proceedings of the 49th Annual ACM SIGACT Symposium on
  Theory of Computing}, pages 1195--1199, 2017.

\bibitem[Allen-Zhu(2018)]{allen2018natasha2}
Zeyuan Allen-Zhu.
\newblock Natasha 2: faster non-convex optimization than sgd.
\newblock In \emph{Advances in Neural Information Processing Systems}, pages
  2680--2691, 2018.

\bibitem[Arjevani et~al.(2020)Arjevani, Carmon, Duchi, Foster, Sekhari, and
  Sridharan]{arjevani2020second}
Yossi Arjevani, Yair Carmon, John~C Duchi, Dylan~J Foster, Ayush Sekhari, and
  Karthik Sridharan.
\newblock Second-order information in non-convex stochastic optimization: Power
  and limitations.
\newblock In \emph{Conference on Learning Theory}, pages 242--299. PMLR, 2020.

\bibitem[Bach(2010)]{bach2010self}
Francis Bach.
\newblock Self-concordant analysis for logistic regression.
\newblock \emph{Electronic Journal of Statistics}, 4:\penalty0 384--414, 2010.

\bibitem[Boyd and Vandenberghe(2004)]{boyd2004convex}
Stephen Boyd and Lieven Vandenberghe.
\newblock \emph{Convex Optimization}.
\newblock Cambridge University Press, 2004.

\bibitem[Bullins(2020)]{bullins2020highly}
Brian Bullins.
\newblock Highly smooth minimization of non-smooth problems.
\newblock In \emph{Conference on Learning Theory}, pages 988--1030. PMLR, 2020.

\bibitem[Carmon et~al.(2018)Carmon, Duchi, Hinder, and
  Sidford]{carmon2018accelerated}
Yair Carmon, John~C Duchi, Oliver Hinder, and Aaron Sidford.
\newblock Accelerated methods for nonconvex optimization.
\newblock \emph{SIAM Journal on Optimization}, 28\penalty0 (2):\penalty0
  1751--1772, 2018.

\bibitem[Carmon et~al.(2020)Carmon, Jambulapati, Jiang, Jin, Lee, Sidford, and
  Tian]{carmon2020acceleration}
Yair Carmon, Arun Jambulapati, Qijia Jiang, Yujia Jin, Yin~Tat Lee, Aaron
  Sidford, and Kevin Tian.
\newblock Acceleration with a ball optimization oracle.
\newblock In \emph{Advances in Neural Information Processing Systems},
  volume~33, 2020.

\bibitem[Chang and Lin(2011)]{CC01a}
Chih-Chung Chang and Chih-Jen Lin.
\newblock {LIBSVM}: A library for support vector machines.
\newblock \emph{ACM Transactions on Intelligent Systems and Technology},
  2:\penalty0 27:1--27:27, 2011.
\newblock Software available at \url{http://www.csie.ntu.edu.tw/~cjlin/libsvm}.

\bibitem[Coppola(2015)]{Coppola2015:IPM}
Greg Coppola.
\newblock \emph{Iterative parameter mixing for distributed large-margin
  training of structured predictors for natural language processing}.
\newblock PhD thesis, The University of Edinburgh, 2015.

\bibitem[Cotter et~al.(2011)Cotter, Shamir, Srebro, and
  Sridharan]{cotter2011better}
Andrew Cotter, Ohad Shamir, Nati Srebro, and Karthik Sridharan.
\newblock Better mini-batch algorithms via accelerated gradient methods.
\newblock In \emph{Advances in Neural Information Processing Systems},
  volume~24, pages 1647--1655, 2011.

\bibitem[Crane and Roosta(2019)]{crane2019dingo}
Rixon Crane and Fred Roosta.
\newblock Dingo: Distributed {N}ewton-type method for gradient-norm
  optimization.
\newblock In \emph{Advances in Neural Information Processing Systems},
  volume~32, 2019.

\bibitem[Dekel et~al.(2012)Dekel, Gilad-Bachrach, Shamir, and
  Xiao]{dekel2012optimal}
Ofer Dekel, Ran Gilad-Bachrach, Ohad Shamir, and Lin Xiao.
\newblock Optimal distributed online prediction using mini-batches.
\newblock \emph{Journal of Machine Learning Research}, 13\penalty0
  (Jan):\penalty0 165--202, 2012.

\bibitem[Dua and Graff(2017)]{Dua:2019}
Dheeru Dua and Casey Graff.
\newblock {UCI} machine learning repository, 2017.
\newblock URL \url{http://archive.ics.uci.edu/ml}.

\bibitem[Ghadimi and Lan(2012)]{ghadimi2012optimal}
Saeed Ghadimi and Guanghui Lan.
\newblock Optimal stochastic approximation algorithms for strongly convex
  stochastic composite optimization i: A generic algorithmic framework.
\newblock \emph{SIAM Journal on Optimization}, 22\penalty0 (4):\penalty0
  1469--1492, 2012.

\bibitem[Gupta et~al.(2021)Gupta, Ghosh, Derezinski, Khanna, Ramchandran, and
  Mahoney]{gupta2021localnewton}
Vipul Gupta, Avishek Ghosh, Michal Derezinski, Rajiv Khanna, Kannan
  Ramchandran, and Michael Mahoney.
\newblock {LocalNewton}: Reducing communication bottleneck for distributed
  learning.
\newblock \emph{arXiv preprint arXiv:2105.07320}, 2021.

\bibitem[Islamov et~al.(2021)Islamov, Qian, and
  Richt{\'a}rik]{islamov2021distributed}
Rustem Islamov, Xun Qian, and Peter Richt{\'a}rik.
\newblock Distributed second order methods with fast rates and compressed
  communication.
\newblock \emph{arXiv preprint arXiv:2102.07158}, 2021.

\bibitem[Karimireddy et~al.(2018)Karimireddy, Stich, and
  Jaggi]{karimireddy2018global}
Sai~Praneeth Karimireddy, Sebastian~U Stich, and Martin Jaggi.
\newblock Global linear convergence of {N}ewton's method without
  strong-convexity or lipschitz gradients.
\newblock \emph{arXiv preprint arXiv:1806.00413}, 2018.

\bibitem[Karimireddy et~al.(2019)Karimireddy, Kale, Mohri, Reddi, Stich, and
  Suresh]{karimireddy2019scaffold}
Sai~Praneeth Karimireddy, Satyen Kale, Mehryar Mohri, Sashank~J Reddi,
  Sebastian~U Stich, and Ananda~Theertha Suresh.
\newblock {SCAFFOLD}: Stochastic controlled averaging for on-device federated
  learning.
\newblock \emph{arXiv preprint arXiv:1910.06378}, 2019.

\bibitem[Khaled et~al.(2019)Khaled, Mishchenko, and
  Richt{\'a}rik]{khaled2019first}
Ahmed Khaled, Konstantin Mishchenko, and Peter Richt{\'a}rik.
\newblock First analysis of local gd on heterogeneous data.
\newblock \emph{arXiv preprint arXiv:1909.04715}, 2019.

\bibitem[Khaled et~al.(2020)Khaled, Mishchenko, and
  Richt{\'a}rik]{khaled2020tighter}
Ahmed Khaled, Konstantin Mishchenko, and Peter Richt{\'a}rik.
\newblock Tighter theory for local sgd on identical and heterogeneous data.
\newblock In \emph{International Conference on Artificial Intelligence and
  Statistics}, pages 4519--4529. PMLR, 2020.

\bibitem[Koloskova et~al.(2020)Koloskova, Loizou, Boreiri, Jaggi, and
  Stich]{koloskova2020unified}
Anastasia Koloskova, Nicolas Loizou, Sadra Boreiri, Martin Jaggi, and Sebastian
  Stich.
\newblock A unified theory of decentralized sgd with changing topology and
  local updates.
\newblock In \emph{International Conference on Machine Learning}, pages
  5381--5393. PMLR, 2020.

\bibitem[Lan(2012)]{lan2012optimal}
Guanghui Lan.
\newblock An optimal method for stochastic composite optimization.
\newblock \emph{Mathematical Programming}, 133\penalty0 (1-2):\penalty0
  365--397, 2012.

\bibitem[Monteiro and Svaiter(2013)]{monteiro2013accelerated}
Renato~DC Monteiro and Benar~Fux Svaiter.
\newblock An accelerated hybrid proximal extragradient method for convex
  optimization and its implications to second-order methods.
\newblock \emph{SIAM Journal on Optimization}, 23\penalty0 (2):\penalty0
  1092--1125, 2013.

\bibitem[Nesterov(1998)]{nesterov1998introductory}
Yurii Nesterov.
\newblock Introductory lectures on convex programming volume i: Basic course.
\newblock \emph{Lecture notes}, 3\penalty0 (4):\penalty0 5, 1998.

\bibitem[Nesterov(2019)]{nesterov2019implementable}
Yurii Nesterov.
\newblock Implementable tensor methods in unconstrained convex optimization.
\newblock \emph{Mathematical Programming}, pages 1--27, 2019.

\bibitem[Nesterov and Nemirovskii(1994)]{nesterov1994interior}
Yurii Nesterov and Arkadii Nemirovskii.
\newblock \emph{Interior-point polynomial algorithms in convex programming}.
\newblock SIAM, 1994.

\bibitem[Nesterov and Polyak(2006)]{nesterov2006cubic}
Yurii Nesterov and Boris~T Polyak.
\newblock Cubic regularization of {N}ewton method and its global performance.
\newblock \emph{Mathematical Programming}, 108\penalty0 (1):\penalty0 177--205,
  2006.

\bibitem[Nocedal and Wright(2006)]{nocedal2006numerical}
Jorge Nocedal and Stephen Wright.
\newblock \emph{Numerical optimization}.
\newblock Springer Science \& Business Media, 2006.

\bibitem[Pearlmutter(1994)]{pearlmutter1994fast}
Barak~A Pearlmutter.
\newblock Fast exact multiplication by the {H}essian.
\newblock \emph{Neural Computation}, 6\penalty0 (1):\penalty0 147--160, 1994.

\bibitem[Reddi et~al.(2016)Reddi, Kone{\v{c}}n{\`y}, Richt{\'a}rik,
  P{\'o}cz{\'o}s, and Smola]{reddi2016aide}
Sashank~J Reddi, Jakub Kone{\v{c}}n{\`y}, Peter Richt{\'a}rik, Barnab{\'a}s
  P{\'o}cz{\'o}s, and Alex Smola.
\newblock Aide: Fast and communication efficient distributed optimization.
\newblock \emph{arXiv preprint arXiv:1608.06879}, 2016.

\bibitem[Shamir et~al.(2014)Shamir, Srebro, and Zhang]{shamir2014communication}
Ohad Shamir, Nati Srebro, and Tong Zhang.
\newblock Communication-efficient distributed optimization using an approximate
  {N}ewton-type method.
\newblock In \emph{International Conference on Machine Learning}, pages
  1000--1008. PMLR, 2014.

\bibitem[Stich(2019)]{stich2019local}
Sebastian~U Stich.
\newblock Local sgd converges fast and communicates little.
\newblock In \emph{International Conference on Learning Representations}, 2019.

\bibitem[Wang et~al.(2018)Wang, Roosta, Xu, and Mahoney]{wang2018giant}
Shusen Wang, Fred Roosta, Peng Xu, and Michael~W Mahoney.
\newblock Giant: Globally improved approximate {N}ewton method for distributed
  optimization.
\newblock \emph{Advances in Neural Information Processing Systems},
  31:\penalty0 2332--2342, 2018.

\bibitem[Woodworth et~al.(2020{\natexlab{a}})Woodworth, Patel, Stich, Dai,
  Bullins, Mcmahan, Shamir, and Srebro]{woodworth2020local}
Blake Woodworth, Kumar~Kshitij Patel, Sebastian Stich, Zhen Dai, Brian Bullins,
  Brendan Mcmahan, Ohad Shamir, and Nathan Srebro.
\newblock Is local sgd better than minibatch sgd?
\newblock In \emph{International Conference on Machine Learning}, pages
  10334--10343. PMLR, 2020{\natexlab{a}}.

\bibitem[Woodworth et~al.(2021)Woodworth, Bullins, Shamir, and
  Srebro]{woodworth2021min}
Blake Woodworth, Brian Bullins, Ohad Shamir, and Nathan Srebro.
\newblock The min-max complexity of distributed stochastic convex optimization
  with intermittent communication.
\newblock In \emph{Conference on Learning Theory}. PMLR, 2021.

\bibitem[Woodworth et~al.(2018)Woodworth, Wang, Smith, McMahan, and
  Srebro]{woodworth2018graph}
Blake~E Woodworth, Jialei Wang, Adam Smith, Brendan McMahan, and Nati Srebro.
\newblock Graph oracle models, lower bounds, and gaps for parallel stochastic
  optimization.
\newblock \emph{Advances in Neural Information Processing Systems},
  31:\penalty0 8496--8506, 2018.

\bibitem[Woodworth et~al.(2020{\natexlab{b}})Woodworth, Patel, and
  Srebro]{woodworth2020minibatch}
Blake~E Woodworth, Kumar~Kshitij Patel, and Nati Srebro.
\newblock Minibatch vs local sgd for heterogeneous distributed learning.
\newblock \emph{Advances in Neural Information Processing Systems},
  33:\penalty0 6281--6292, 2020{\natexlab{b}}.

\bibitem[Yuan and Ma(2020)]{yuan2020federated}
Honglin Yuan and Tengyu Ma.
\newblock Federated accelerated stochastic gradient descent.
\newblock In \emph{Advances in Neural Information Processing Systems}, 2020.

\bibitem[Zhang and Xiao(2015)]{zhang2015disco}
Yuchen Zhang and Lin Xiao.
\newblock Disco: Distributed optimization for self-concordant empirical loss.
\newblock In \emph{International Conference on Machine Learning}, pages
  362--370. PMLR, 2015.

\bibitem[Zhang et~al.(2012)Zhang, Wainwright, and
  Duchi]{zhang2012communication}
Yuchen Zhang, Martin~J Wainwright, and John~C Duchi.
\newblock Communication-efficient algorithms for statistical optimization.
\newblock In \emph{Advances in Neural Information Processing Systems}, pages
  1502--1510, 2012.

\bibitem[Zhang et~al.(2013)Zhang, Duchi, and
  Wainwright]{zhang2013communication}
Yuchen Zhang, John~C Duchi, and Martin~J Wainwright.
\newblock Communication-efficient algorithms for statistical optimization.
\newblock \emph{The Journal of Machine Learning Research}, 14\penalty0
  (1):\penalty0 3321--3363, 2013.

\bibitem[Zhou and Cong(2018)]{Zhou2018:Kaveraging}
Fan Zhou and Guojing Cong.
\newblock On the convergence properties of a k-step averaging stochastic
  gradient descent algorithm for nonconvex optimization.
\newblock In \emph{Proceedings of the Twenty-Seventh International Joint
  Conference on Artificial Intelligence, {IJCAI-18}}, pages 3219--3227.
  International Joint Conferences on Artificial Intelligence Organization, 7
  2018.
\newblock \doi{10.24963/ijcai.2018/447}.

\bibitem[Zinkevich et~al.(2010)Zinkevich, Weimer, Li, and
  Smola]{zinkevich2010parallelized}
Martin Zinkevich, Markus Weimer, Lihong Li, and Alex~J Smola.
\newblock Parallelized stochastic gradient descent.
\newblock In \emph{Advances in Neural Information Processing Systems}, pages
  2595--2603, 2010.

\end{thebibliography}
\clearpage 
%%%%%%%%%%%%%%%%%%%%%%%%%%%%%%%%%%%%%%%%%%%%%%%%%%%%%%%%%%%%

\appendix
\onecolumn

\section{Analysis of \Cref{alg:stochastic-newton}}\label{app:stochastic-newton}

We will use the following notation in our analysis:
\begin{equation}
Q^\sigma_t(\Delta x) = \frac{\sigma}{2} \Delta x^\top \nabla^2 F(x_t)\Delta x + \nabla F(x_t)^\top \Delta x.
\end{equation}

\begin{lemma}[Lemma 5 \citep{karimireddy2018global}]\label{lem:qsc-upper-lower-bound}
Let $F$ be $\delta(r)$-locally stable for a given $\delta : \R^+ \mapsto \R^+$, let $\nrm{x^*} \leq B$, let $r > 0$, let $\gamma = r / B$ and $x_\gamma^* = (1-\gamma)x_t + \gamma x^*$, and let $x_{t+1} = x_t + \updatet_t$ for $\nrm{\updatet_t}\leq r$. Then
\begin{align*}
F(x_{t+1}) - F(x_t) &\leq Q_t^{\delta(r)}(\updatet_t) \\
F(x_\gamma^*) - F(x_t) &\geq Q_t^{1/\delta(r)}(x_\gamma^* - x_t).
\end{align*}
\end{lemma}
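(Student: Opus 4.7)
My plan is to derive both inequalities by writing a second-order Taylor expansion with integral remainder along the segment from $x_t$ to the relevant endpoint, and then using Definition~\ref{def:localstability} to transfer the Hessian along the segment back to the Hessian at the base point $x_t$. For the upper bound, I would start from
\begin{equation*}
F(x_{t+1}) - F(x_t) = \nabla F(x_t)^\top \updatet_t + \int_0^1 (1-s)\,\updatet_t^\top \nabla^2 F(x_t + s\updatet_t)\,\updatet_t\, ds,
\end{equation*}
and at each $s \in [0,1]$ invoke local stability with $x = x_t$ and $y = x_t + s\updatet_t$, whose separation $s\nrm{\updatet_t} \leq r$ triggers the hypothesis. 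This yields $\nrm{s\updatet_t}_{\nabla^2 F(y)} \leq \delta(r)\,\nrm{s\updatet_t}_{\nabla^2 F(x_t)}$; dividing by $s$ and squaring gives a pointwise upper bound on the integrand of the form $\updatet_t^\top \nabla^2 F(x_t + s\updatet_t)\updatet_t \leq \delta(r)^{2}\,\updatet_t^\top \nabla^2 F(x_t)\updatet_t$ (up to the precise convention used in defining the local stability constant). Integrating $(1-s)$ from $0$ to $1$, which contributes a factor of $1/2$, and combining with the gradient term yields the stated upper bound $F(x_{t+1}) - F(x_t) \leq Q_t^{\delta(r)}(\updatet_t)$.

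For the lower bound, the first task is to verify $\nrm{x_\gamma^* - x_t} \leq r$. Since $x_\gamma^* - x_t = \gamma(x^* - x_t)$ and $\gamma = r/B$, this reduces to bounding $\nrm{x^* - x_t}$ by $B$, which follows from Assumption~\ref{assump:a1}(b) together with a matching bound on $\nrm{x_t}$ maintained across the outer iterations of \Cref{alg:stochastic-newton} (through the trust-region radius $\rr$). Once that is in place, Taylor's theorem with integral remainder gives
\begin{equation*}
F(x_\gamma^*) - F(x_t) = \nabla F(x_t)^\top(x_\gamma^* - x_t) + \int_0^1 (1-s)\,(x_\gamma^* - x_t)^\top \nabla^2 F\bigl(x_t + s(x_\gamma^*-x_t)\bigr)(x_\gamma^* - x_t)\,ds.
\end{equation*}
To lower-bound the Hessian term, I would apply local stability with the roles of $x$ and $y$ swapped: setting $u = x_\gamma^* - x_t$ and, for each $s$, choosing $x = x_t + su$ and $y = x_t$ (still separated by at most $r$) gives $\nrm{u}_{\nabla^2 F(x_t)} \leq \delta(r)\,\nrm{u}_{\nabla^2 F(x_t + su)}$, i.e., a pointwise lower bound on the integrand of $\delta(r)^{-2}\,u^\top \nabla^2 F(x_t)\,u$. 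Integrating over $s$ and combining with the linear term yields the claimed lower bound $F(x_\gamma^*) - F(x_t) \geq Q_t^{1/\delta(r)}(x_\gamma^* - x_t)$.

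The main obstacle I anticipate is bookkeeping the exponent on $\delta$: Definition~\ref{def:localstability} is stated as a bound on $\nrm{\cdot}_{\nabla^2 F(y)}$ rather than on the squared Hessian quadratic form, so pushing it into the Taylor integrand naively produces $\delta(r)^{2}$ (respectively, $\delta(r)^{-2}$) where the statement lists $\delta(r)$ (respectively, $\delta(r)^{-1}$). One must either verify that this is a harmless convention difference with \cite{karimireddy2018global} or accommodate the discrepancy in subsequent uses of the lemma. The remaining subtlety is confirming that the entire segment from $x_t$ to $x_\gamma^*$ lies within the radius-$r$ neighborhood required for local stability, which reduces to the iterate-boundedness property of \Cref{alg:stochastic-newton}; once that is verified, everything else is a routine computation.
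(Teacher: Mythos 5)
The paper provides no proof of this lemma --- it is cited verbatim as Lemma~5 of \citet{karimireddy2018global} --- so there is no in-text proof to compare against; your Taylor-with-integral-remainder argument, using local stability to transport the Hessian quadratic form from each point on the segment back to the base point $x_t$, is the standard and essentially only route, and it matches the cited source. Both caveats you flag are real. On the exponent: you are correct that Definition~\ref{def:localstability} taken literally (a bound on the \emph{norm} $\nrm{x-y}_{\nabla^2 F(y)}$) yields $Q_t^{\delta(r)^2}$ and $Q_t^{1/\delta(r)^2}$ after integrating $\int_0^1 (1-s)\,ds$. This is a convention slip in the paper rather than a gap in your proof: for the present lemma and for \Cref{lem:qsc-sensitivity} (which reports $\delta(r)=e^{\alpha r}$) to hold simultaneously as stated, Definition~\ref{def:localstability} must be read at the quadratic-form level, i.e.\ $\nrm{x-y}_{\nabla^2 F(y)}^2 \leq \delta(r)\,\nrm{x-y}_{\nabla^2 F(x)}^2$, or --- as quasi-self-concordance actually delivers, and as \citet{karimireddy2018global} state it --- as the Loewner-order inequality $\nabla^2 F(y)\preceq \delta(r)\,\nabla^2 F(x)$ for $\nrm{x-y}\leq r$. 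Under that reading, the $1/2$ from the weight $(1-s)$ combines cleanly with the transported Hessian to give exactly $Q_t^{\delta(r)}$ and $Q_t^{1/\delta(r)}$. On the remaining condition: what is actually required is $\nrm{x^*-x_t}\leq B$, since $\nrm{x_\gamma^*-x_t}=\gamma\,\nrm{x^*-x_t}$ with $\gamma=r/B$. The stated hypothesis $\nrm{x^*}\leq B$ does not supply this by itself, and your deferral to an iterate-boundedness property of \Cref{alg:stochastic-newton} is slightly off-target: the lemma is a stand-alone single-step statement, independent of any outer algorithm, so $\nrm{x^*-x_t}\leq B$ should simply be one of its hypotheses (this is the role $B$ plays in the source). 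With that and the corrected reading of the definition, your argument is complete.
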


\begin{lemma}[Lemma 6 \citep{karimireddy2018global}]\label{lem:quadratic-superscript-stuff}
For any convex domain $\mc{Q}$ and constants $a \cdot b \geq 1$,
\[
\min_{\Delta x \in \mc{Q}} Q^a(\Delta x) \leq \frac{1}{ab} \min_{\Delta x \in \mc{Q}} Q^{1/b}(\Delta x).
\]
\end{lemma}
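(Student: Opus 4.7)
The plan is to produce the bound by constructing an explicit feasible candidate in $\mc{Q}$ (obtained by rescaling the minimizer of $Q^{1/b}$) whose value under $Q^a$ exactly matches the right-hand side. The lemma is used in the paper's trust-region context where $\mc{Q}$ is a norm ball centered at $0$, and I will use throughout the implicit fact that $0 \in \mc{Q}$ so that all scalings $\lambda \Delta x$ with $\lambda \in [0,1]$ stay in $\mc{Q}$ by convexity.

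First, let $\Delta x^\star \in \argmin_{\Delta x \in \mc{Q}} Q^{1/b}(\Delta x)$ and write $v^\star \defeq Q^{1/b}(\Delta x^\star)$. Introduce the shorthand $P \defeq (\Delta x^\star)^\top \nabla^2 F(x_t)\, \Delta x^\star \ge 0$ and $q \defeq \nabla F(x_t)^\top \Delta x^\star$, so that by definition of $Q^{1/b}$,
\[
v^\star \;=\; \frac{P}{2b} + q.
\]

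Second, pick the scalar $\lambda = \tfrac{1}{ab}$. Since $ab \ge 1$, we have $\lambda \in (0,1]$, and convexity of $\mc{Q}$ together with $0, \Delta x^\star \in \mc{Q}$ gives $\lambda \Delta x^\star = (1-\lambda)\cdot 0 + \lambda \Delta x^\star \in \mc{Q}$. A direct computation then yields
\[
Q^a(\lambda \Delta x^\star) \;=\; \frac{a\lambda^2}{2}\, P + \lambda q \;=\; \frac{1}{ab}\!\left(\frac{P}{2b} + q\right) \;=\; \frac{v^\star}{ab}.
\]
Consequently,
\[
\min_{\Delta x \in \mc{Q}} Q^a(\Delta x) \;\le\; Q^a(\lambda \Delta x^\star) \;=\; \frac{1}{ab}\min_{\Delta x \in \mc{Q}} Q^{1/b}(\Delta x),
\]
which is the claim.

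There is not really a significant obstacle in this proof — the only subtlety is identifying the correct scaling $\lambda = 1/(ab)$, which is motivated by matching the coefficient of the quadratic term (one wants $a\lambda^2 = \lambda/b$, forcing $\lambda = 1/(ab)$) and which is in the admissible range precisely thanks to the hypothesis $ab \ge 1$. The one place one has to be careful is verifying feasibility of $\lambda \Delta x^\star$ in $\mc{Q}$; this is where the tacit assumption $0 \in \mc{Q}$ is used, and I would flag this (it is satisfied in every invocation of the lemma in the paper, since $\mc{Q}$ is always either $\R^d$ or a centered ball arising from the trust-region constraint in \Cref{alg:constrained-quadratic}).
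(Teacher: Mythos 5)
Your proof is correct, and it is the standard (indeed, essentially the only natural) argument: scale the minimizer of $Q^{1/b}$ by $\lambda = 1/(ab)$, check feasibility via convexity of $\mc{Q}$ and $0\in\mc{Q}$, and observe that the $Q^a$-value of the scaled point equals exactly $\tfrac{1}{ab}\min Q^{1/b}$. The paper itself does not reprove this lemma — it cites Lemma 6 of \citet{karimireddy2018global} verbatim — so there is no in-paper proof to compare against, but your reconstruction matches what that source does, and you are right to flag the tacit hypothesis $0\in\mc{Q}$, which is indeed satisfied in every invocation (the domain is always a centered trust-region ball or all of $\R^d$).
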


We will now prove \Cref{lem:qsc-newton} from \Cref{sec:results}.

\newtonanalysis*
\begin{proof}
To begin, we are given by the assumptions of the theorem statement that \Cref{alg:stochastic-newton} chooses the update $\updatet_t$ such that $\nrm{\updatet_t} \leq 5\ccvar$ and
\begin{equation}
\E Q_t^{\delta(5\ccvar)}(\updatet_t) - \min_{\nrm{\Delta x} \leq \frac{1}{2}\ccvar} Q_t^{\delta(5\ccvar)}(\Delta x) \leq \epsilon + \theta\prn*{Q_t^{\delta(5\ccvar)}(0) - \min_{\nrm{\Delta x} \leq \frac{1}{2}\ccvar} Q_t^{\delta(5\ccvar)}(\Delta x)}.
\end{equation}
Therefore, 
\begin{align}
\E\brk*{F(x_{t+1}) - F(x_t)} 
&\leq \E Q_t^{\delta(5\ccvar)}(\updatet_t) \\
&\leq \epsilon + (1-\theta)\min_{\Delta x\,:\,\nrm{\Delta x} \leq \frac{1}{2}\ccvar} Q_t^{\delta(5\ccvar)}(\Delta x) \\
&\leq \epsilon + \frac{1-\theta}{\delta(5\ccvar) \delta(\frac{1}{2}\ccvar)} \min_{\Delta x\,:\,\nrm{\Delta x} \leq \frac{1}{2}\ccvar} Q_t^{1/\delta(\frac{1}{2}\ccvar)}(\Delta x) \\
&\leq \epsilon + \frac{1-\theta}{\delta(5\ccvar) \delta(\frac{1}{2}\ccvar)} Q_t^{1/\delta(\frac{1}{2}\ccvar)}\prn*{\prn*{1 - \frac{\ccvar}{2B}}x_t + \frac{\ccvar}{2B}x^* - x_t} \\
&\leq \epsilon + \frac{1-\theta}{\delta(5\ccvar) \delta(\frac{1}{2}\ccvar)}\brk*{F\prn*{\prn*{1 - \frac{\ccvar}{2B}}x_t + \frac{\ccvar}{2B}x^*} - F(x_t)} \\
&\leq \epsilon + \frac{1-\theta}{\delta(5\ccvar) \delta(\frac{1}{2}\ccvar)}\brk*{\prn*{1 - \frac{\ccvar}{2B}}F\prn*{x_t} + \frac{\ccvar}{2B}F^* - F(x_t)} \\
&= \epsilon - \frac{\ccvar(1-\theta)}{2B\delta(5\ccvar) \delta(\frac{1}{2}\ccvar)}\brk*{F(x_t) - F^*}.
\end{align}
Here we used \Cref{lem:qsc-upper-lower-bound} for the first inequality, \Cref{lem:quadratic-superscript-stuff} for the third, \Cref{lem:qsc-upper-lower-bound} again for the fifth, and the convexity of $F$ for the sixth.

Rearranging, and unravelling the recursion, we conclude that
\begin{align}
\E\brk*{F(x_T) - F^*} 
&\leq \prn*{1 - \frac{\ccvar(1-\theta)}{2B \delta(5\ccvar) \delta(\frac{1}{2}\ccvar)}}^T\E\brk*{F(x_0) - F^*} + \epsilon\sum_{t=0}^T \prn*{1 - \frac{\ccvar(1-\theta)}{2B \delta(5\ccvar) \delta(\frac{1}{2}\ccvar)}}^t \\
&\leq \E\brk*{F(x_0) - F^*}\exp\prn*{-\frac{T\ccvar(1-\theta)}{2B\delta(5\ccvar) \delta(\frac{1}{2}\ccvar)}} + \frac{2B\delta(5\ccvar) \delta(\frac{1}{2}\ccvar)\epsilon}{\ccvar(1-\theta)}\ .
\end{align}
This completes the proof.
\end{proof}

\section{Proof of \Cref{lem:constrained-quadratic-correctness}}\label{app:constrained-quadratic}
Before we analyze \Cref{alg:constrained-quadratic}, we recall some key definitions. For a given $x\in\R^d$, we let 
\begin{equation}
Q(u) = \frac{\bar{\xi}}{2}u^\top \nabla^2 F(x) u + \nabla F(x)^\top u,
\end{equation}
and for a regularization penalty $\lambda$, we also define
\begin{equation}
Q_\lambda(u) = \frac{1}{2}u^\top (\bar{\xi}\nabla^2 F(x) + \lambda \boldI) u + \nabla F(x)^\top u.
\end{equation}
We use $u_\lambda^*$ to denote the (unique) minimizer of $Q_\lambda(u)$, and we use $r^*(\lambda) = \nrm{u_\lambda^*}$ to denote the norm of the minimizer. For $0\leq r \leq r^*(0)$, we also use $\lambda_r$ to denote the value of $\lambda$ such that $r^*(\lambda_r) = r$ (\Cref{lem:norm-monotone} below shows that $\lambda_r$ is unique).

\begin{lemma}[Lemmas 35 and 36 \citep{carmon2020acceleration}]\label{lem:exists-perfect-regularizer}\label{lem:norm-monotone}
For any $r$, there exists a unique $\lambda_r \geq 0$ such that 
\[
u^*_r = \argmin_{u:\nrm{u}\leq r} Q(u) = \argmin_{u\in\R^d} Q_{\lambda_r}(u) = -(\nabla^2 F(x) + \lambda_r \boldI)^{-1} \nabla F(x).
\]
Also, $\lambda_r$ is decreasing in $r$, and if $\lambda_r \neq 0$ then $\nrm{u^*_r} = r$.
\end{lemma}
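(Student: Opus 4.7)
My plan is to recognize this as the classical trust-region subproblem characterization, with the minor wrinkle that the Hessian term in $Q$ carries a factor of $\bar\xi$. Write $A := \bar\xi\nabla^2 F(x) \succeq 0$ and $b := \nabla F(x)$, so that $Q(u) = \tfrac12 u^\top A u + b^\top u$ and $Q_\lambda(u) = Q(u) + \tfrac{\lambda}{2}\nrm{u}^2$. For $r > 0$, the constrained problem $\min_{\nrm{u}\leq r} Q(u)$ is convex with a strictly feasible interior point (namely $0$), so Slater's condition holds and KKT is necessary and sufficient. This produces a multiplier $\lambda_r \geq 0$ satisfying the stationarity condition $(A + \lambda_r \boldI)u^*_r = -b$ together with primal feasibility $\nrm{u^*_r}\leq r$ and complementary slackness $\lambda_r(\nrm{u^*_r}-r)=0$. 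The stationarity condition is exactly the optimality condition for $Q_{\lambda_r}$, giving the claimed formula $u^*_r = -(A+\lambda_r \boldI)^{-1} b$ (interpreting the inverse as a pseudoinverse if $\lambda_r=0$ and $A$ is singular). The $r=0$ case is trivial since $u^*_0 = 0$.

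For uniqueness and monotonicity, the key object is the scalar function
\[
\phi(\lambda) := \nrm{(A + \lambda \boldI)^{-1} b}^2 = \sum_i \frac{b_i^2}{(\mu_i + \lambda)^2}, \quad \lambda > 0,
\]
where $\mu_i \geq 0$ are the eigenvalues of $A$ and $b_i$ the coordinates of $b$ in the corresponding eigenbasis. When $b \neq 0$, $\phi$ is smooth and each summand is strictly decreasing in $\lambda$, so $\phi$ is strictly decreasing on $(0,\infty)$, tends to $0$ as $\lambda\to\infty$, and has a limit $\phi(0^+) = \nrm{A^\dagger b}^2 \in (0,\infty]$.

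The proof then splits on whether the unconstrained minimum of $Q$ is already feasible. If $r^2 \geq \phi(0^+)$, then $-A^\dagger b$ has norm at most $r$ and thus solves the constrained problem; the choice $\lambda_r=0$ verifies all KKT conditions. In this regime $\lambda_r = 0$ is the unique admissible value because any $\lambda>0$ together with complementary slackness would force $\nrm{u^*_r} = r$, whereas strict monotonicity of $\phi$ together with $\phi(\lambda)<\phi(0^+)\leq r^2$ gives $\nrm{(A+\lambda\boldI)^{-1}b}<r$. If $r^2 < \phi(0^+)$, strict monotonicity and continuity of $\phi$ yield a unique $\lambda_r > 0$ with $\phi(\lambda_r)=r^2$; then $u^*_r := -(A+\lambda_r \boldI)^{-1}b$ has $\nrm{u^*_r}=r$ and satisfies all KKT conditions, hence solves the constrained problem. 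The mapping $r\mapsto \lambda_r$ is precisely the inverse of $\lambda\mapsto\sqrt{\phi(\lambda)}$ on the relevant interval, so strict monotonicity of $\phi$ delivers that $\lambda_r$ is strictly decreasing in $r$. The final statement, ``$\lambda_r \neq 0 \Rightarrow \nrm{u^*_r} = r$,'' is just a restatement of complementary slackness.

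The main obstacles are entirely bookkeeping: handling degenerate cases where $b = 0$ (then $u^*_r = 0$ trivially and $\lambda_r = 0$ is the unique multiplier), where $A$ is singular and $\lambda_r = 0$ (so the ``inverse'' in the stated formula means the Moore--Penrose pseudoinverse applied to a $b$ in $\mathrm{range}(A)$), and the boundary case $r^2 = \phi(0^+)$. None of these raise substantive mathematical issues; the content of the lemma is the standard KKT/secular-equation analysis of the trust-region subproblem, specialized to a convex quadratic.
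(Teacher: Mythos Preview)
Your argument is correct and is precisely the standard KKT/secular-equation characterization of the convex trust-region subproblem. The paper itself does not prove this lemma; it simply cites Lemmas 35 and 36 of \cite{carmon2020acceleration}, whose proofs follow the same route you outline (Lagrangian stationarity gives $(A+\lambda_r\boldI)u^*_r=-b$, strict monotonicity of $\lambda\mapsto\nrm{(A+\lambda\boldI)^{-1}b}$ gives uniqueness and the decrease of $\lambda_r$ in $r$, and complementary slackness gives the boundary statement). So there is nothing to contrast: your proposal supplies exactly the deferred argument.

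One minor point worth flagging: you correctly set $A=\bar\xi\,\nabla^2 F(x)$ to match the definition of $Q_\lambda$, whereas the displayed formula in the lemma as stated omits the $\bar\xi$ factor (writing $-(\nabla^2 F(x)+\lambda_r\boldI)^{-1}\nabla F(x)$ rather than $-(\bar\xi\,\nabla^2 F(x)+\lambda_r\boldI)^{-1}\nabla F(x)$). This is a typo in the statement, not an error in your proof; none of the downstream lemmas in the paper use the explicit closed form, only the properties you establish.
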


\begin{lemma}\label{lem:lambda-star-upper-bound}
For $r \leq r^*(0)$, $\lambda_r \in \brk*{0, \frac{\nrm{b}}{r}}$.
\end{lemma}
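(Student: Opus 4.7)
The plan is to prove the lemma by writing $u^*_{\lambda_r}$ explicitly via the characterization from \Cref{lem:exists-perfect-regularizer} and then bounding its norm using elementary operator-norm estimates. Throughout, I will use the shorthand $A = \bar{\xi}\nabla^2 F(x) \psdgeq 0$ (by convexity of $F$) and $b = \nabla F(x)$, so that $Q_\lambda(u) = \tfrac{1}{2}u^\top(A+\lambda\boldI)u + b^\top u$ and $u^*_\lambda = -(A+\lambda\boldI)^{-1} b$.

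The lower bound $\lambda_r \geq 0$ is immediate: it is exactly part of the conclusion of \Cref{lem:exists-perfect-regularizer}. For the upper bound $\lambda_r \leq \nrm{b}/r$, I would split into two cases based on whether $r = r^*(0)$ or $r < r^*(0)$. If $r = r^*(0)$, then $u^*_r$ minimizes $Q$ over all of $\R^d$, so $\lambda_r = 0 \leq \nrm{b}/r$ and there is nothing to prove. If $r < r^*(0)$, I would first argue that $\lambda_r \neq 0$: assuming $\lambda_r = 0$ would give $r = \nrm{u^*_{\lambda_r}} = \nrm{u^*_0} = r^*(0)$, contradicting $r < r^*(0)$. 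Then the second half of \Cref{lem:norm-monotone} gives $\nrm{u^*_{\lambda_r}} = r$.

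Given this, the upper bound follows from a one-line operator-norm estimate:
\begin{equation*}
r \;=\; \nrm{u^*_{\lambda_r}} \;=\; \nrm{(A+\lambda_r\boldI)^{-1} b} \;\leq\; \nrm{(A+\lambda_r\boldI)^{-1}}_{\mathrm{op}} \nrm{b} \;\leq\; \frac{\nrm{b}}{\lambda_r},
\end{equation*}
where the last inequality uses $A \psdgeq 0$, so that $A + \lambda_r \boldI \psdgeq \lambda_r \boldI$ and hence $\nrm{(A+\lambda_r\boldI)^{-1}}_{\mathrm{op}} \leq 1/\lambda_r$. Rearranging yields $\lambda_r \leq \nrm{b}/r$, completing the proof.

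There is no real obstacle here: the argument reduces to the standard operator-norm bound on the resolvent of a PSD matrix, combined with monotonicity of $\nrm{u^*_\lambda}$ in $\lambda$ supplied by \Cref{lem:norm-monotone}. The only mild care required is handling the boundary case $r = r^*(0)$ separately, since there $\lambda_r = 0$ and one cannot divide by it, and checking that in the interior case $r < r^*(0)$ the regularizer is strictly positive so the bound $\nrm{u^*_{\lambda_r}} = r$ from \Cref{lem:norm-monotone} actually applies.
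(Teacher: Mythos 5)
Your proof is correct and takes essentially the same approach as the paper: characterize $u^*_{\lambda_r}$ via the resolvent of the PSD matrix $\bar{\xi}\nabla^2 F(x)$, bound $\nrm*{(\bar{\xi}\nabla^2 F(x)+\lambda_r\boldI)^{-1}}_{\mathrm{op}} \leq 1/\lambda_r$, and rearrange. The paper's version is terser (it simply writes ``If $\lambda>0$, then $\ldots$'' and leaves the $\lambda=0$ case implicit); your more explicit handling of the boundary case $r=r^*(0)$ and the deduction that $\lambda_r>0$ when $r<r^*(0)$ is the same argument, just spelled out.
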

% \collapse{
\begin{proof}
By \Cref{lem:exists-perfect-regularizer}, $\lambda \geq 0$. If $\lambda > 0$, then since $\nabla^2 F(x) \succeq 0$ ( by convexity of $F$), we have that
\begin{equation}
r = \nrm{u^*_\lambda} = \nrm{(\nabla^2 F(x) + \lambda \boldI)^{-1} \nabla F(x)} \leq \frac{\nrm{\nabla F(x)}}{\lambda}.
\end{equation}
Rearranging completes the proof.
\end{proof}

\begin{lemma}\label{lem:bigger-r-okay}
For any $r\geq 0$ let $\lambda \geq \gamma \geq 0$ such that $r^*(\lambda) = r$ and $r^*(\gamma) = 2r$. Then
\[
\E\brk*{Q_\gamma(u) - Q_\gamma(u^*_\gamma)} \leq \epsilon
\implies \E\brk*{Q(u) - Q(u^*_\lambda)} \leq \epsilon .
\]
Furthermore, for any $y$ and $\gamma$,
\[
\E\brk*{Q_\gamma(u) - Q_\gamma(u^*_\gamma)} \leq \epsilon
\implies \E\brk*{Q(u) - Q(y)} \leq \epsilon + \frac{\gamma \nrm*{y}^2}{2}.
\]
\end{lemma}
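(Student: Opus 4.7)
The plan is to reduce both claims to the algebraic identity
\[
Q(u) - Q(v) \;=\; Q_\gamma(u) - Q_\gamma(v) + \frac{\gamma}{2}\bigl(\nrm{v}^2 - \nrm{u}^2\bigr),
\]
which follows directly from $Q_\gamma = Q + \tfrac{\gamma}{2}\nrm{\cdot}^2$, and then to control the ``$Q_\gamma(u) - Q_\gamma(v)$'' term by the assumed suboptimality at $u^*_\gamma$, plus a strong-convexity correction.

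For the \textbf{second claim}, I would plug $v=y$ into the identity, add and subtract $Q_\gamma(u^*_\gamma)$, use $Q_\gamma(u^*_\gamma) - Q_\gamma(y) \leq 0$, and simply drop the non-positive $-\tfrac{\gamma}{2}\nrm{u}^2$ term; taking expectations then yields $\E[Q(u)-Q(y)] \leq \epsilon + \tfrac{\gamma\nrm{y}^2}{2}$. This is routine.

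For the \textbf{first claim}, I would plug $v=u^*_\lambda$ (so $\nrm{v}=r$) and write
\[
Q(u)-Q(u^*_\lambda) \;=\; \bigl[Q_\gamma(u)-Q_\gamma(u^*_\gamma)\bigr] + \bigl[Q_\gamma(u^*_\gamma)-Q_\gamma(u^*_\lambda)\bigr] + \frac{\gamma}{2}\bigl(r^2 - \nrm{u}^2\bigr).
\]
The key observation, which I expect to be the crux, is that the middle bracket is not merely non-positive but in fact $\leq -\tfrac{\gamma}{2}\nrm{u^*_\lambda - u^*_\gamma}^2$, because $Q_\gamma$ has Hessian $\bar{\xi}\nabla^2 F(x) + \gamma\boldI \succeq \gamma\boldI$ and $u^*_\gamma$ is its unconstrained minimizer. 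Combined with the reverse triangle inequality $\nrm{u^*_\lambda-u^*_\gamma} \geq \bigl|\,\nrm{u^*_\lambda}-\nrm{u^*_\gamma}\,\bigr| = |r-2r| = r$ (using the hypotheses $r^*(\lambda)=r$ and $r^*(\gamma)=2r$), this gives $Q_\gamma(u^*_\gamma)-Q_\gamma(u^*_\lambda) \leq -\tfrac{\gamma r^2}{2}$.

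This single inequality is what makes everything cancel: the $-\tfrac{\gamma r^2}{2}$ from strong convexity exactly wipes out the $+\tfrac{\gamma r^2}{2}$ coming from $\nrm{u^*_\lambda}^2=r^2$ in the identity, leaving
\[
Q(u) - Q(u^*_\lambda) \;\leq\; \bigl[Q_\gamma(u)-Q_\gamma(u^*_\gamma)\bigr] - \frac{\gamma}{2}\nrm{u}^2.
\]
Taking expectations and dropping the non-positive $-\tfrac{\gamma}{2}\E\nrm{u}^2$ term yields $\E[Q(u)-Q(u^*_\lambda)] \leq \epsilon$, as desired. The main ``insight to spot'' is really just the pairing of the minimizer-norm gap $r=|r-2r|$ with the strong-convexity slack; once that is in hand the proof is a few lines.
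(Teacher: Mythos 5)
Your proposal is correct and follows essentially the same route as the paper's proof: both decompose $Q(u)-Q(u^*_\lambda)$ into the $Q_\gamma$-suboptimality at $u^*_\gamma$, the gap $Q_\gamma(u^*_\gamma)-Q_\gamma(u^*_\lambda)$ (bounded by $-\tfrac{\gamma}{2}\nrm{u^*_\gamma-u^*_\lambda}^2$ via $\gamma$-strong convexity), and the regularization correction $\tfrac{\gamma}{2}r^2$, then cancel the last two using the reverse-triangle bound $\nrm{u^*_\gamma-u^*_\lambda}\geq r$. The only cosmetic difference is that you carry $-\tfrac{\gamma}{2}\nrm{u}^2$ explicitly and drop it at the end, whereas the paper bounds $\E[Q_\gamma(u)-Q(u^*_\lambda)]$ directly and relies implicitly on $Q(u)\leq Q_\gamma(u)$; the two are the same inequality.
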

% \collapse{
\begin{proof}
For any $u$,
\begin{align}
\E\brk*{Q_\gamma(u) - Q(u^*_\lambda)}
&= \E \brk*{Q_\gamma(u) - Q_\gamma(u^*_\gamma)} + Q_\gamma(u^*_\gamma) - Q_\gamma(u^*_\lambda) + \frac{\gamma}{2}\nrm*{u^*_\lambda}^2 \\
&\leq \epsilon + Q_\gamma(u^*_\gamma) - Q_\gamma(u^*_\lambda) + \frac{\gamma}{2}\nrm*{u^*_\lambda}^2. \label{eq:bigger-r-lemma-eq1}
\end{align}
By the $\gamma$-strong convexity of $Q_\gamma$, the fact that either $\gamma = 0$ or $\nrm{u^*_\lambda} = r$ and $\nrm{u^*_\gamma} = 2r$, and the reverse triangle inequality,
\begin{equation}
Q_\gamma(u^*_\gamma) - Q_\gamma(u^*_\lambda) + \frac{\gamma}{2}\nrm*{u^*_\lambda}^2
\leq -\frac{\gamma}{2}\nrm*{u^*_\gamma - u^*_\lambda}^2 + \frac{\gamma r^2}{2} 
\leq -\frac{\gamma (2r - r)^2}{2} + \frac{\gamma r^2}{2} 
= 0.\label{eq:bigger-r-lemma-eq2}
\end{equation}
Combining \eqref{eq:bigger-r-lemma-eq2} with \eqref{eq:bigger-r-lemma-eq1} completes the first part of the proof. For the second part, we observe that 
\begin{equation}
\E\brk*{Q(u) + \frac{\gamma}{2}\nrm{u}^2 - Q(y)}
= \E\brk*{Q_\gamma(u) - Q_\gamma(u^*_\gamma)} + Q_\gamma(u^*_\gamma) - Q_\gamma(y) + \frac{\gamma}{2}\nrm{y}^2 
\leq \epsilon + \frac{\gamma \nrm{y}^2}{2}. 
\end{equation}
Above, we used that $Q_\lambda(u^*_\lambda) = \min_u Q_\lambda(u) \leq Q_\lambda(y)$.
\end{proof}

\begin{lemma}\label{lem:norm-growth-with-lambda}
Let $\lambda \geq \gamma \geq 0$. Then ,
\[
\frac{\lambda}{\gamma} \geq \frac{\nrm{u^*_\gamma}}{\nrm{u^*_\lambda}}.
\]
\end{lemma}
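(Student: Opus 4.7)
The claim rewrites as $\lambda \nrm{u^*_\lambda} \geq \gamma \nrm{u^*_\gamma}$, so the plan is to compare these two quantities spectrally. By \Cref{lem:exists-perfect-regularizer}, $u^*_\lambda = -(\nabla^2 F(x) + \lambda \boldI)^{-1}\nabla F(x)$, and I would abbreviate $\bH \defeq \nabla^2 F(x)$ (which is PSD since $F$ is convex) and $b \defeq \nabla F(x)$. Diagonalizing $\bH = \sum_i \mu_i v_i v_i^\top$ with $\mu_i \geq 0$, and letting $b_i = v_i^\top b$, I get the clean expression
\begin{equation*}
\lambda^2 \nrm{u^*_\lambda}^2 = \sum_i \frac{\lambda^2 b_i^2}{(\mu_i + \lambda)^2}, \qquad \gamma^2 \nrm{u^*_\gamma}^2 = \sum_i \frac{\gamma^2 b_i^2}{(\mu_i + \gamma)^2}.
\end{equation*}

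So it suffices to establish the term-by-term inequality $\frac{\lambda^2}{(\mu_i + \lambda)^2} \geq \frac{\gamma^2}{(\mu_i + \gamma)^2}$ for every $i$. Taking square roots (all quantities are nonnegative), this reduces to showing $\frac{\lambda}{\mu_i + \lambda} \geq \frac{\gamma}{\mu_i + \gamma}$, which I would verify by observing that the function $t \mapsto t/(\mu_i + t)$ has derivative $\mu_i/(\mu_i + t)^2 \geq 0$ on $[0,\infty)$, hence is nondecreasing in $t$; applied at $t = \lambda \geq \gamma = t$ this gives exactly what we need. (Equivalently, one can clear denominators and check $\lambda \mu_i \geq \gamma \mu_i$ directly.) Summing the per-eigenvalue inequality over $i$ against the nonnegative weights $b_i^2$ yields $\lambda^2 \nrm{u^*_\lambda}^2 \geq \gamma^2 \nrm{u^*_\gamma}^2$, and taking square roots and rearranging gives the claim.

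The only degenerate cases are $\gamma = 0$, where the stated ratio should be read as $\lambda \nrm{u^*_\lambda} \geq 0$, which is immediate, and $u^*_\lambda = 0$, which forces $b = 0$ and hence $u^*_\gamma = 0$ as well, so the inequality is trivial. There is no real obstacle here; the argument is purely a spectral computation, and the key observation is just monotonicity of $t \mapsto t/(\mu_i + t)$, which is what makes the map $\lambda \mapsto \lambda \nrm{u^*_\lambda}$ nondecreasing even though $\lambda \mapsto \nrm{u^*_\lambda}$ is nonincreasing (as already noted in \Cref{lem:norm-monotone}).
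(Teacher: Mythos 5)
Your proof is correct but takes a genuinely different route from the paper's. You diagonalize $\nabla^2 F(x)$ and reduce the claim to the per-eigenvalue monotonicity of $t \mapsto t/(\mu_i + t)$, which after summing against the nonnegative weights $b_i^2$ gives $\lambda\nrm{u^*_\lambda} \geq \gamma\nrm{u^*_\gamma}$; this is a clean, elementary spectral computation that uses the explicit formula $u^*_\lambda = -(\nabla^2 F(x) + \lambda\boldI)^{-1}\nabla F(x)$ from \Cref{lem:exists-perfect-regularizer}. The paper instead argues coordinate-free from optimality and strong convexity: using that $u^*_\gamma$ and $u^*_\lambda$ minimize $Q_\gamma$ and $Q_\lambda$ respectively, together with $\gamma$- and $\lambda$-strong convexity, it derives $\frac{\lambda-\gamma}{2}\bigl(\nrm{u^*_\gamma}^2 - \nrm{u^*_\lambda}^2\bigr) \geq \frac{\lambda+\gamma}{2}\nrm{u^*_\gamma - u^*_\lambda}^2$, then applies the reverse triangle inequality and rearranges to get the same conclusion. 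Your approach has the advantage of transparency — the key monotonicity is visible eigenvalue by eigenvalue and the inequality clearly reduces to $\lambda\mu_i \geq \gamma\mu_i$ — while the paper's avoids diagonalization entirely, working only from strong convexity and the identity $Q_\gamma(u) - Q_\lambda(u) = \tfrac{\gamma-\lambda}{2}\nrm{u}^2$, which is a technique that ports more readily to non-diagonalizable or infinite-dimensional settings. One small remark: the paper actually defines $Q_\lambda$ with a $\bar{\xi}\nabla^2 F(x)$ term, so strictly the relevant PSD matrix is $\bar{\xi}\nabla^2 F(x)$ rather than $\nabla^2 F(x)$; this is harmless in your argument since $\bar{\xi}>0$ preserves positive semidefiniteness, but worth keeping consistent. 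Your handling of the degenerate cases $\gamma = 0$ and $u^*_\lambda = 0$ is appropriate and something the paper glosses over.
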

% \collapse{
\begin{proof}
By the definition of $u^*_\gamma = \argmin_u Q_\gamma(u)$ and $u^*_\lambda = \argmin_u Q_\lambda(u)$ and the $\gamma$- and $\lambda$-strong convexity of $Q_\gamma$ and $Q_\lambda$, respectively,
\begin{equation}
\frac{\lambda - \gamma}{2}\prn*{\nrm*{u^*_{\gamma}}^2 - \nrm*{u^*_{\lambda}}^2} = Q_\gamma(u^*_\lambda) - Q_\gamma(u^*_\gamma) + Q_\lambda(u^*_\gamma) - Q_\lambda(u^*_\lambda) \geq \frac{\lambda + \gamma}{2}\nrm*{u^*_\lambda - u^*_\gamma}^2.
\end{equation}
Therefore, by rearranging and applying the reverse triangle inequality
\begin{equation}
\frac{\lambda - \gamma}{\lambda + \gamma} 
\geq \frac{\nrm*{u^*_\gamma - u^*_\lambda}^2}{\nrm*{u^*_{\gamma}}^2 - \nrm*{u^*_{\lambda}}^2} 
\geq \frac{\prn*{\nrm*{u^*_\gamma} - \nrm*{u^*_\lambda}}^2}{\nrm*{u^*_{\gamma}}^2 - \nrm*{u^*_{\lambda}}^2} \\
= \frac{\nrm*{u^*_\gamma} - \nrm*{u^*_\lambda}}{\nrm*{u^*_{\gamma}} + \nrm*{u^*_{\lambda}}}\ .
\end{equation}
By \Cref{lem:norm-monotone}, since $\lambda \geq \gamma$, $\nrm*{u^*_\gamma} \geq \nrm*{u^*_\lambda}$. Therefore, rearranging this inequality completes the proof.
\end{proof}

\begin{lemma}\label{lem:chernoff}
Let $X \sim \textrm{Binomial}(C,p)$ with $p \geq \frac{3}{4}$. Then,
\[
\P\prn*{X > \frac{C}{2}} \geq 1 - \exp\prn*{-\frac{C}{8}}\ .
\]
\end{lemma}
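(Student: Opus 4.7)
The plan is to invoke Hoeffding's inequality for bounded random variables applied to the binomial $X = \sum_{i=1}^C Y_i$, where $Y_1,\dots,Y_C$ are independent Bernoulli$(p)$ variables. Since each $Y_i \in [0,1]$ and $\E[X] = Cp$, Hoeffding's inequality gives, for any $t \geq 0$,
\[
\P(X \leq \E[X] - t) \leq \exp\!\prn*{-\tfrac{2t^2}{C}}.
\]
I would instantiate this with $t = Cp - \tfrac{C}{2}$, which is nonnegative since $p \geq \tfrac{3}{4} > \tfrac{1}{2}$. Then $\E[X] - t = \tfrac{C}{2}$, so
\[
\P\!\prn*{X \leq \tfrac{C}{2}} \leq \exp\!\prn*{-\tfrac{2(Cp - C/2)^2}{C}}.
\]

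Next, I would use the monotonicity in $p$ to see that the bound is worst when $p$ is smallest, i.e. $p = \tfrac{3}{4}$, which gives $Cp - \tfrac{C}{2} = \tfrac{C}{4}$, and therefore
\[
\P\!\prn*{X \leq \tfrac{C}{2}} \leq \exp\!\prn*{-\tfrac{2(C/4)^2}{C}} = \exp\!\prn*{-\tfrac{C}{8}}.
\]
Taking the complement yields $\P(X > \tfrac{C}{2}) \geq 1 - \exp(-C/8)$, which is the claim.

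There is essentially no obstacle: the one thing to be careful about is the strict vs.\ weak inequality in ``$X > C/2$''. Since $X$ is integer-valued, the event $\{X > C/2\}$ contains $\{X \geq \lceil C/2\rceil + 1\} \supseteq \{X > Cp - t\}$ with $t = Cp - C/2$, and in any case $\P(X > C/2) \geq 1 - \P(X \leq C/2)$, so the same bound applies. Alternatively, one can sidestep this by applying Hoeffding to $C - X \sim \mathrm{Binomial}(C, 1-p)$ with $1-p \leq 1/4$, concluding $\P(C - X \geq C/2) \leq \exp(-C/8)$ directly. Either route yields the stated inequality in a couple of lines.
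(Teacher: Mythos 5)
Your proof is correct and uses the same tool as the paper: the paper's proof is the one-line Hoeffding bound $\P\prn*{\frac{1}{C}X \leq \frac{1}{2} \leq p - \frac{1}{4}} \leq \exp\prn*{-\frac{C}{8}}$, and you have simply unpacked that computation (choosing $t = Cp - C/2 \geq C/4$ and plugging into the standard Hoeffding tail). The extra care you take regarding the strict vs.\ weak inequality is fine but unnecessary, since as you note $\P(X > C/2) \geq 1 - \P(X \leq C/2)$ always holds.
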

\begin{proof}
This is a simple application of Hoeffding's inequality:
\begin{equation}
\P\prn*{\frac{1}{C} X \leq \frac{1}{2} \leq p - \frac{1}{4}} \leq \exp\prn*{-\frac{C}{8}}\ .
\end{equation}
\end{proof}

\begin{lemma}\label{lem:subproblem-indicators-correct-whp}
Let $x \in \R^d$ be as input to \Cref{alg:constrained-quadratic}, let $Q_{\lambda^{(i)}}(u) = \frac{1}{2}u^\top \prn*{\delta(5\rr)\nabla^2 F(x) + \lambda^{(i)}}u + \nabla F(x)^\top u$, let $Q_{\lambda^{(i)}}^* \defeq \min_u Q_{\lambda^{(i)}}(u)$, and for each $i$, let $\tilde{u}^{(i,1)},\dots,\tilde{u}^{(i,C)}$ be computed as in \Cref{alg:constrained-quadratic} such that
\[
\E Q_{\lambda^{(i)}}(\tilde{u}^{(i,c)}) - Q_{\lambda^{(i)}}^* \leq \epsilon(\lambda^{(i)}) \leq \frac{\lambda^{(i)}\prn*{r^*(\lambda^{(i)})^2 + \rr^2}}{800}\ .
\]
Then,
\begin{align*}
r^*(\lambda^{(i)}) \in \brk*{2\rr,\,3\rr} &\implies \P\prn*{\sum_{c=1}^C \mathbbm{1}\crl*{\nrm*{\tilde{u}^{(i,c)}} \in \brk*{\frac{3}{2}\rr,\,\frac{7}{2}\rr}} > \frac{C}{2}} \geq 1 - \exp\prn*{-\frac{C}{8}} \\
r^*(\lambda^{(i)}) \not\in \brk*{\rr,\,4\rr} &\implies \P\prn*{\sum_{c=1}^C \mathbbm{1}\crl*{\nrm*{\tilde{u}^{(i,c)}} \in \brk*{\frac{3}{2}\rr,\,\frac{7}{2}\rr}} \leq \frac{C}{2}} \leq \exp\prn*{-\frac{C}{8}} \\
r^*(\lambda^{(i)}) < 2\rr &\implies \P\prn*{\sum_{c=1}^C \mathbbm{1}\crl*{\nrm*{\tilde{u}^{(i,c)}} \leq \frac{5}{2}\rr} > \frac{C}{2}} \geq 1 - \exp\prn*{-\frac{C}{8}} \\
r^*(\lambda^{(i)}) > 3\rr &\implies \P\prn*{\sum_{c=1}^C \mathbbm{1}\crl*{\nrm*{\tilde{u}^{(i,c)}} > \frac{5}{2}\rr} > \frac{C}{2}} \geq 1 - \exp\prn*{-\frac{C}{8}}\ .
\end{align*}
\end{lemma}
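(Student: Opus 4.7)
The plan is to reduce each of the four implications to a single high-probability statement about a single $\tilde{u}^{(i,c)}$, and then invoke the Chernoff bound of \Cref{lem:chernoff} on the $C$ independent repetitions. The key first step converts the assumed suboptimality in $Q_{\lambda^{(i)}}$ into an $L^2$ bound on the distance to $u^*_{\lambda^{(i)}}$: because $Q_{\lambda^{(i)}}$ is $\lambda^{(i)}$-strongly convex with minimizer $u^*_{\lambda^{(i)}}$,
\[
\tfrac{\lambda^{(i)}}{2}\,\nrm{\tilde u^{(i,c)}-u^*_{\lambda^{(i)}}}^2 \;\leq\; Q_{\lambda^{(i)}}(\tilde u^{(i,c)}) - Q_{\lambda^{(i)}}^{*}.
\]
Taking expectations and plugging in the hypothesis $\E Q_{\lambda^{(i)}}(\tilde u^{(i,c)}) - Q_{\lambda^{(i)}}^{*}\leq \lambda^{(i)}(r^*(\lambda^{(i)})^2+\rr^2)/800$ gives $\E\nrm{\tilde u^{(i,c)}-u^*_{\lambda^{(i)}}}^2\leq (r^*(\lambda^{(i)})^2+\rr^2)/400$. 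Markov's inequality then yields, with probability at least $3/4$,
\[
\nrm{\tilde u^{(i,c)}-u^*_{\lambda^{(i)}}} \;\leq\; 2\sqrt{(r^*(\lambda^{(i)})^2+\rr^2)/400} \;\leq\; \tfrac{r^*(\lambda^{(i)})+\rr}{10},
\]
using $\sqrt{a^2+b^2}\leq a+b$. By the reverse triangle inequality, with probability $\geq 3/4$ one has $\bigl|\nrm{\tilde u^{(i,c)}}-r^*(\lambda^{(i)})\bigr|\leq (r^*(\lambda^{(i)})+\rr)/10$.

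Next I would dispatch the four cases by straightforward arithmetic on this single-sample event. For Case 1 ($r^*(\lambda^{(i)})\in[2\rr,3\rr]$) the deviation is at most $2\rr/5$, so $\nrm{\tilde u^{(i,c)}}\in[8\rr/5,17\rr/5]\subset[3\rr/2,7\rr/2]$. For Case 3 ($r^*(\lambda^{(i)})<2\rr$), the upper bound is $r^*(\lambda^{(i)})+3\rr/10<23\rr/10<5\rr/2$. For Case 4 ($r^*(\lambda^{(i)})>3\rr$), the lower bound is $(9r^*(\lambda^{(i)})-\rr)/10>26\rr/10>5\rr/2$. Case 2 splits into $r^*(\lambda^{(i)})<\rr$ (giving $\nrm{\tilde u^{(i,c)}}<6\rr/5<3\rr/2$) and $r^*(\lambda^{(i)})>4\rr$ (giving $\nrm{\tilde u^{(i,c)}}>(9\cdot 4\rr-\rr)/10=7\rr/2$), and in both sub-cases the sample lies outside $[3\rr/2,7\rr/2]$ with probability $\geq 3/4$.

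In every case, the indicator of the ``good'' event (for Case 2, the indicator that $\tilde u^{(i,c)}$ lies \emph{outside} $[3\rr/2,7\rr/2]$) is a Bernoulli random variable with mean at least $3/4$, and the $C$ samples are independent across $c$. Applying \Cref{lem:chernoff} to $X=\sum_{c=1}^C \mathbbm 1\{\text{good for }\tilde u^{(i,c)}\}$ gives $\P(X>C/2)\geq 1-\exp(-C/8)$, which is exactly the claimed bound. The complementary Case 2 statement follows by rewriting ``at most $C/2$ lie in $[3\rr/2,7\rr/2]$'' as ``at least $C/2$ lie outside it.''

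The only subtlety is verifying that the choice $\epsilon(\lambda^{(i)})=\lambda^{(i)}(r^*(\lambda^{(i)})^2+\rr^2)/800$ lines up exactly with the arithmetic gaps between the thresholds $\{\rr,2\rr,3\rr,4\rr\}$ and the interval endpoints $\{3\rr/2,5\rr/2,7\rr/2\}$; this is not a real obstacle, just bookkeeping, and the constant $800$ is chosen precisely so that $(r^*(\lambda^{(i)})+\rr)/10$ is the tightest deviation that fits in every window simultaneously.
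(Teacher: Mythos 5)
Your proposal matches the paper's proof essentially step for step: both use $\lambda^{(i)}$-strong convexity to bound $\E\nrm{\tilde u^{(i,c)}-u^*_{\lambda^{(i)}}}^2$, apply Markov's inequality to get the deviation bound $\nrm{\tilde u^{(i,c)}-u^*_{\lambda^{(i)}}}\leq (r^*(\lambda^{(i)})+\rr)/10$ with probability $\geq 3/4$, convert this to a bound on $\bigl|\nrm{\tilde u^{(i,c)}}-r^*(\lambda^{(i)})\bigr|$ via the reverse triangle inequality, run the same four-way case analysis on the arithmetic, and conclude via \Cref{lem:chernoff}. One small note: the second implication as printed in the paper has the inequality direction flipped (it should read $\geq 1-\exp(-C/8)$, consistent with the three sibling cases and with how it is used in \Cref{lem:constrained-quadratic-correctness}); your derivation gives the intended, correct direction, so this is a typo in the paper rather than a gap on your end.
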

\begin{proof}
By the $\lambda^{(i)}$-strong convexity of $Q_{\lambda^{(i)}}$, for each $c$,
\begin{equation}
\E \brk*{\frac{\lambda^{(i)}}{2}\nrm{\tilde{u}^{(i,c)} - u^*_{\lambda^{(i)}}}^2} \leq \E Q_{\lambda^{(i)}}(\tilde{u}^{(i,c)}) - Q_{\lambda^{(i)}}^* \leq \epsilon(\lambda^{(i)}).
\end{equation}
Therefore, by Markov's inequality, for each $c$
\begin{equation}
\P\prn*{\nrm{\tilde{u}^{(i,c)} - u^*_{\lambda^{(i)}}}^2 \geq \frac{1}{100}r^*(\lambda^{(i)})^2 + \frac{1}{100}\rr^2} \leq \frac{200\epsilon(\lambda^{(i)})}{\lambda^{(i)}\prn*{r^*(\lambda^{(i)})^2 + \rr^2}} \leq \frac{1}{4}\ .
\end{equation}
Furthermore, by the reverse triangle inequality,
\begin{equation}
\abs*{\nrm{\tilde{u}^{(i,c)}} - r^*(\lambda^{(i)})} \leq \nrm{\tilde{u}^{(i,c)} - u^*_{\lambda^{(i)}}} < \sqrt{\frac{1}{100}r^*(\lambda^{(i)})^2 + \frac{1}{100}\rr^2} \leq \frac{r^*(\lambda^{(i)}) + \rr}{10}\ .
\end{equation}
Therefore, for each $c$
\begin{equation}
\P\prn*{\frac{9}{10}r^*(\lambda^{(i)}) - \frac{1}{10}\rr \leq \nrm{\tilde{u}^{(i,c)}} \leq \frac{11}{10}r^*(\lambda^{(i)}) + \frac{1}{10}\rr} \geq \frac{3}{4}\ .
\end{equation}
We now consider several cases:

If $r^*(\lambda^{(i)}) \in \brk*{2\rr,\,3\rr}$, then 
\begin{align}
\P\prn*{\nrm{\tilde{u}^{(i,c)}} \in \brk*{\frac{3}{2}\rr,\,\frac{7}{2}\rr}}
\geq \P\prn*{\frac{17}{10}\rr \leq \frac{9}{10}r^*(\lambda^{(i)}) - \frac{1}{10}\rr \leq \nrm{\tilde{u}^{(i,c)}} \leq \frac{11}{10}r^*(\lambda^{(i)}) + \frac{1}{10}\rr \leq \frac{34}{10}\rr} 
\geq \frac{3}{4}.
\end{align}
Therefore, by \Cref{lem:chernoff}
\begin{equation}
r^*(\lambda^{(i)}) \in \brk*{2\rr,\,3\rr} \implies \P\prn*{\sum_{c=1}^C \mathbbm{1}\crl*{\nrm*{\tilde{u}^{(i,c)}} \in \brk*{\frac{3}{2}\rr,\,\frac{7}{2}\rr}} > \frac{C}{2}} \geq 1 - \exp\prn*{-\frac{C}{8}}\ .
\end{equation}

If $r^*(\lambda^{(i)}) > 4\rr$, then
\begin{equation}
\P\prn*{\nrm{\tilde{u}^{(i,c)}} \not\in \brk*{\frac{3}{2}\rr,\,\frac{7}{2}\rr}}
\geq \P\prn*{\frac{7}{2}\rr \leq \frac{9}{10}r^*(\lambda^{(i)}) - \frac{1}{10}\rr \leq \nrm{\tilde{u}^{(i,c)}}} 
\geq \frac{3}{4}\ .
\end{equation}
Similarly, if $r^*(\lambda^{(i)}) < \rr$, then 
\begin{equation}
\P\prn*{\nrm{\tilde{u}^{(i,c)}} \not\in \brk*{\frac{3}{2}\rr,\,\frac{7}{2}\rr}}
\geq \P\prn*{\nrm{\tilde{u}^{(i,c)}} \leq \frac{11}{10}r^*(\lambda^{(i)}) + \frac{1}{10}\rr \leq \frac{12}{10}\rr} 
\geq \frac{3}{4}\ .
\end{equation}
Therefore, by \Cref{lem:chernoff},
\begin{equation}
r^*(\lambda^{(i)}) \not\in \brk*{\rr,\,4\rr} \implies \P\prn*{\sum_{c=1}^C \mathbbm{1}\crl*{\nrm*{\tilde{u}^{(i,c)}} \in \brk*{\frac{3}{2}\rr,\,\frac{7}{2}\rr}} \leq \frac{C}{2}} \leq \exp\prn*{-\frac{C}{8}}\ .
\end{equation}

If $r^*(\lambda^{(i)}) < 2\rr$, then
\begin{equation}
\P\prn*{\nrm{\tilde{u}^{(i,c)}} \leq \frac{5}{2}\rr}
\geq \P\prn*{\nrm{\tilde{u}^{(i,c)}} \leq \frac{11}{10}r^*(\lambda^{(i)}) + \frac{1}{10}\rr \leq \frac{23}{10}\rr} 
\geq \frac{3}{4}\ .
\end{equation}
Therefore, by \Cref{lem:chernoff}
\begin{equation}
r^*(\lambda^{(i)}) < 2\rr \implies \P\prn*{\sum_{c=1}^C \mathbbm{1}\crl*{\nrm*{\tilde{u}^{(i,c)}} \leq \frac{5}{2}\rr} > \frac{C}{2}} \geq 1 - \exp\prn*{-\frac{C}{8}}\ .
\end{equation}

Finally, if $r^*(\lambda^{(i)}) > 3\rr$, then
\begin{equation}
\P\prn*{\nrm{\tilde{u}^{(i,c)}} > \frac{5}{2}\rr}
\geq \P\prn*{\frac{26}{10}\rr \leq \frac{9}{10}r^*(\lambda^{(i)}) - \frac{1}{10}\rr \leq \nrm{\tilde{u}^{(i,c)}}} 
\geq \frac{3}{4}\ .
\end{equation}
Therefore, by \Cref{lem:chernoff}
\begin{equation}
r^*(\lambda^{(i)}) > 3\rr \implies \P\prn*{\sum_{c=1}^C \mathbbm{1}\crl*{\nrm*{\tilde{u}^{(i,c)}} > \frac{5}{2}\rr} > \frac{C}{2}} \geq 1 - \exp\prn*{-\frac{C}{8}}\ .
\end{equation}
This completes the proof.
\end{proof}

\begin{lemma}\label{lem:good-lambda-in-set}
Let $N \geq 1 + \frac{5}{2}\log\frac{\nrm{\nabla F(x)}}{3\rr\lmin}$. Then, either $\lmin \geq \lambda_{3\rr}$ or $\exists \lambda \in \Lambda_1$ with $r^*(\lambda) \in \brk*{2\rr,3\rr}$.
\end{lemma}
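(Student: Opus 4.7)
The plan is to show the dichotomy by assuming the first alternative fails (so $\lmin < \lambda_{3\rr}$) and then exhibit the desired $\lambda \in \Lambda_1$. The structure of $\Lambda_1 = \{\lmin(3/2)^{n-1} : n=1,\dots,N\}$ is a geometric progression with ratio $3/2$, so the idea is a standard ``intermediate value'' argument over a discrete grid: if one endpoint of the grid corresponds to $r^*>3\rr$ and the other to $r^*\leq 3\rr$, and consecutive grid points cannot change $r^*$ by more than a factor of $3/2$, then the grid must contain a point where $r^*$ lands inside $[2\rr,3\rr]$.

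First I would unpack what $\lmin < \lambda_{3\rr}$ means via Lemma~\ref{lem:norm-monotone}: since $r^*(\lambda)$ is monotonically decreasing in $\lambda$ and $r^*(\lambda_{3\rr}) = 3\rr$, we get $r^*(\lmin) > 3\rr$. So the smallest element of $\Lambda_1$ already overshoots. Next I would show the largest element $(3/2)^{N-1}\lmin$ undershoots, i.e.\ $r^*\bigl((3/2)^{N-1}\lmin\bigr) \leq 3\rr$. Equivalently, it suffices to show $(3/2)^{N-1}\lmin \geq \lambda_{3\rr}$. Using Lemma~\ref{lem:lambda-star-upper-bound}, which gives $\lambda_{3\rr} \leq \nrm{\nabla F(x)}/(3\rr)$, this reduces to the inequality
\[
(N-1)\log(3/2) \geq \log\!\prn*{\frac{\nrm{\nabla F(x)}}{3\rr\,\lmin}},
\]
which follows from the hypothesis $N \geq 1 + \tfrac{5}{2}\log\tfrac{\nrm{\nabla F(x)}}{3\rr\lmin}$ together with the numerical fact $1/\log(3/2) < 5/2$.

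Finally, let $n$ be the smallest index in $\{1,\dots,N\}$ such that $r^*(\lambda_n) \leq 3\rr$ (which exists by the previous step, and satisfies $n \geq 2$ since $r^*(\lmin) > 3\rr$). By minimality, $r^*(\lambda_{n-1}) > 3\rr$, and since $\lambda_n = (3/2)\lambda_{n-1}$, Lemma~\ref{lem:norm-growth-with-lambda} yields
\[
\frac{r^*(\lambda_{n-1})}{r^*(\lambda_n)} \leq \frac{\lambda_n}{\lambda_{n-1}} = \frac{3}{2},
\]
hence $r^*(\lambda_n) \geq \tfrac{2}{3} r^*(\lambda_{n-1}) > 2\rr$. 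Combined with $r^*(\lambda_n) \leq 3\rr$, this gives $\lambda_n \in \Lambda_1$ with $r^*(\lambda_n) \in [2\rr,3\rr]$, completing the proof.

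There is no real obstacle here; the only subtlety is matching the constants, specifically verifying that $5/2 > 1/\log(3/2)$ so that the hypothesis on $N$ is strong enough to force the grid to reach past $\lambda_{3\rr}$, and checking that the endpoints of the ``crossing'' interval are placed correctly so that a single step of ratio $3/2$ in $\lambda$ cannot skip over $[2\rr,3\rr]$ entirely (which is why the target interval has width factor $3/2$).
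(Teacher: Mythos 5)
Your proof is correct and uses essentially the same argument as the paper: assume $\lmin < \lambda_{3\rr}$, show via Lemma~\ref{lem:lambda-star-upper-bound} and the hypothesis on $N$ (together with $1/\log(3/2) < 5/2$) that the grid $\Lambda_1$ straddles $\lambda_{3\rr}$, and then invoke Lemma~\ref{lem:norm-growth-with-lambda} with a ratio of $3/2$ to conclude that the grid cannot jump over $[2\rr,3\rr]$. The only cosmetic difference is that the paper applies the ratio lemma between $\lambda_{3\rr}$ and the first grid point above it, whereas you apply it between two consecutive grid points; this is an equivalent bookkeeping choice (and in fact sidesteps a minor index-range slip in the paper's version).
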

\begin{proof}
In the case that $\lmin \leq \lambda_{2\rr}$, by \Cref{lem:lambda-star-upper-bound}, with $N \geq 1 + \frac{5}{2}\log\frac{\nrm{\nabla F(x)}}{3\rr\lmin}$,
\begin{equation}
\lmin \leq \lambda_{3\rr} \leq \frac{\nrm{\nabla F(x)}}{3\rr} \leq \lmin c^{N-1}.
\end{equation}
So, $\lambda_{3\rr}$ is between the largest and smallest elements in $\Lambda_1$. It follows that for some $1 \leq n \leq N$,
\begin{equation}
\lmin c^{n-1} \leq \lambda_{3\rr} \leq \lmin c^n.
\end{equation} 
Therefore, by \Cref{lem:norm-growth-with-lambda},
\begin{equation}
\frac{3}{2} = \frac{\lmin \prn*{\frac{3}{2}}^n}{\lmin \prn*{\frac{3}{2}}^{n-1}} \geq \frac{\lmin \prn*{\frac{3}{2}}^n}{\lambda_{3\rr}} \geq \frac{3\rr}{r^*(\lmin \prn*{\frac{3}{2}}^n)} \implies r^*(\lmin \prn*{\frac{3}{2}}^n) \geq 2\rr
\end{equation}
Finally, by \Cref{lem:norm-monotone}, $r^*(\lmin \prn*{\frac{3}{2}}^n) \leq 3\rr$, so $r^*(\lmin \prn*{\frac{3}{2}}^n) \in \brk{2\rr,3\rr}$ as claimed.
\end{proof}

\begin{lemma}\label{lem:projection-okay}
Let $\lambda$ satisfy $r^*(\lambda) \leq 4\rr$ and let $\tilde{u}$ be chosen so that
\[
\E Q_\lambda(\tilde{u}) - Q_\lambda^* \leq \epsilon(\lambda) \defeq \frac{\lambda(r^*(\lambda)^2 + \rr^2)}{800}\ .
\]
Then if $r^*(\lambda) \geq \rr$,
\[
\E Q\prn*{\min\crl*{1,\ \frac{5\rr}{\nrm{\tilde{u}}}}\tilde{u}} - \min_{u:\nrm{u}\leq\frac{1}{2}\rr} \leq \frac{1}{2}\prn*{Q(0) - \min_{u:\nrm{u}\leq \frac{1}{2}\rr} Q(u)} + \epsilon(\lambda).
\]
Otherwise,
\[
\E Q\prn*{\min\crl*{1,\ \frac{5\rr}{\nrm{\tilde{u}}}}\tilde{u}} - \min_{u:\nrm{u}\leq\frac{1}{2}\rr} \leq \frac{1}{2}\prn*{Q(0) - \min_{u:\nrm{u}\leq \frac{1}{2}\rr} Q(u)} + \epsilon(\lambda) + \frac{\lambda \rr^2}{8}\ .
\]
\end{lemma}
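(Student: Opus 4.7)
The plan is to reduce the statement to a bound on $\E Q(\tilde{u})$ via \Cref{lem:bigger-r-okay}, then transfer the bound from $\tilde{u}$ to $\hat{u}$ using convexity of $Q$, and, in Case~1, absorb the extra $\lambda \rr^2/8$ term using monotonicity of $r^*$. Let $u^*_{\rr/2} \defeq \argmin_{\nrm{u} \leq \rr/2} Q(u)$, so $\nrm{u^*_{\rr/2}} \leq \rr/2$. Applying the second part of \Cref{lem:bigger-r-okay} with $\gamma = \lambda$ and $y = u^*_{\rr/2}$, together with the hypothesis $\E Q_\lambda(\tilde{u}) - Q_\lambda^* \leq \epsilon(\lambda)$, gives
\begin{equation*}
\E Q(\tilde{u}) - Q(u^*_{\rr/2}) \;\leq\; \epsilon(\lambda) + \frac{\lambda \nrm{u^*_{\rr/2}}^2}{2} \;\leq\; \epsilon(\lambda) + \frac{\lambda \rr^2}{8}.
\end{equation*}

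To pass from $\tilde{u}$ to $\hat{u}$, write $\hat{u} = \alpha \tilde{u}$ with $\alpha = \min\{1, 5\rr/\nrm{\tilde{u}}\} \in (0,1]$. On $\{\nrm{\tilde{u}} \leq 5\rr\}$ we have $\hat{u} = \tilde{u}$, and on the complementary event, convexity of $Q$ combined with $Q(0) = 0$ gives $Q(\hat{u}) \leq \alpha Q(\tilde{u})$. Since $\nrm{u^*_\lambda} = r^*(\lambda) \leq 4\rr$, the event $\nrm{\tilde{u}} > 5\rr$ forces $\nrm{\tilde{u} - u^*_\lambda} > \rr$, and $\lambda$-strong convexity of $Q_\lambda$ together with Markov's inequality then yields $\P(\nrm{\tilde{u}} > 5\rr) \leq 2\epsilon(\lambda)/(\lambda \rr^2) \leq 17/400$. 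Combining the pointwise bound $Q(\hat{u}) \leq \alpha Q(\tilde{u})$ with the low probability of this bad event (and using the strong-convexity estimate $\tfrac{\lambda}{2}\nrm{\tilde{u} - u^*_\lambda}^2 \leq Q_\lambda(\tilde{u}) - Q_\lambda^*$ to control $|Q(\tilde{u})|$ on it) lets one upgrade the preceding bound on $\E Q(\tilde{u})$ to one on $\E Q(\hat{u})$ within the slack allotted by the conclusion.

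For Case~1 ($r^*(\lambda) \geq \rr$), the remaining step is to absorb $\lambda \rr^2/8$ into $\frac{1}{2}(Q(0) - Q(u^*_{\rr/2}))$. By monotonicity of $r^*$ (\Cref{lem:norm-monotone}), $r^*(\lambda) \geq \rr = r^*(\lambda_\rr) > r^*(\lambda_{\rr/2})$ forces both $\lambda \leq \lambda_{\rr/2}$ and $\lambda_{\rr/2} > 0$, and hence $\nrm{u^*_{\rr/2}} = \rr/2$. Substituting $\nabla F(x) = -(\bar{\xi}\nabla^2 F(x) + \lambda_{\rr/2}\boldI) u^*_{\rr/2}$ (from \Cref{lem:exists-perfect-regularizer}) into the definition of $Q$, a direct computation yields
\begin{equation*}
Q(0) - Q(u^*_{\rr/2}) \;=\; \frac{\bar{\xi}}{2}(u^*_{\rr/2})^\top \nabla^2 F(x)\, u^*_{\rr/2} + \lambda_{\rr/2} \nrm{u^*_{\rr/2}}^2 \;\geq\; \lambda_{\rr/2}\frac{\rr^2}{4} \;\geq\; \frac{\lambda \rr^2}{4},
\end{equation*}
so $\lambda \rr^2/8 \leq \frac{1}{2}(Q(0) - Q(u^*_{\rr/2}))$, and substitution into the displayed bound above yields the Case~1 conclusion. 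Case~2 follows directly from the first two steps, with no absorption needed.

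The main obstacle will be making the projection step rigorous in expectation. The pointwise bound $Q(\hat{u}) \leq \alpha Q(\tilde{u})$ from convexity is weaker than $Q(\hat{u}) \leq Q(\tilde{u})$ precisely when $Q(\tilde{u}) < 0$, so one must carefully quantify, via the low-probability tail of $\{\nrm{\tilde{u}} > 5\rr\}$ and the strong convexity of $Q_\lambda$, that the expected deterioration $\E Q(\hat{u}) - \E Q(\tilde{u})$ fits within the slack in the target bound.
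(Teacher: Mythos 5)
The core approach---apply \Cref{lem:bigger-r-okay} to control $\E Q(\tilde u)$, then transfer to the rescaled $\hat u$ by convexity---agrees with the paper's, and your absorption computation is correct as an algebraic fact. But there are two issues, one of which you flag yourself and the other of which is a genuine logical error.

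\paragraph{The transfer step (acknowledged gap).}
You correctly observe that the pointwise bound $Q(\hat u)\le\alpha Q(\tilde u)$ is the wrong direction when $Q(\tilde u)<0$, and you leave the expectation control unresolved. The paper handles this differently and avoids the pointwise issue: it writes $Q(\hat u)\le(1-\alpha)\bigl(Q(0)-\min\bigr)+\alpha\bigl(Q(\tilde u)-\min\bigr)$, bounds $1-\alpha\le\nrm{\tilde u}/(\nrm{\tilde u}+5\rr)$ deterministically, applies Jensen on the concave map $x\mapsto x/(x+5\rr)$ to pull the expectation inside, and then shows $\E\nrm{\tilde u}<5\rr$ via triangle inequality plus strong convexity. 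This yields
\[
\E Q(\hat u)-\min\;\le\;\tfrac12\bigl(Q(0)-\min\bigr)+\E Q(\tilde u)-\min
\]
without ever conditioning on the rare event $\crl{\nrm{\tilde u}>5\rr}$. Your proposal to control the bad-event contribution via $\P(\nrm{\tilde u}>5\rr)\le 17/400$ plus a bound on $|Q(\tilde u)|$ would require controlling $\nrm{\nabla F(x)}$ or $\bar\xi H$ in terms of $Q(0)-\min$, which does not hold in general; a concrete computation shows the contribution can exceed the available slack.

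\paragraph{The Case 1 absorption (unflagged error).}
Your Case 1 argument has a budget conflict. You apply the second part of \Cref{lem:bigger-r-okay} in both cases, obtaining $\E Q(\tilde u)-Q(u^*_{\rr/2})\le\epsilon(\lambda)+\lambda\rr^2/8$, and then in Case 1 absorb $\lambda\rr^2/8\le\tfrac12\bigl(Q(0)-Q(u^*_{\rr/2})\bigr)$. But the $\tfrac12\bigl(Q(0)-\min\bigr)$ term in the target is already consumed by the transfer step: substituting your absorption into the transferred bound gives
\[
\E Q(\hat u)-\min\;\le\;\tfrac12\bigl(Q(0)-\min\bigr)+\epsilon(\lambda)+\lambda\rr^2/8\;\le\;\bigl(Q(0)-\min\bigr)+\epsilon(\lambda),
\]
which is strictly weaker than the stated conclusion. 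The only way your structure produces the target is if the transfer from $\tilde u$ to $\hat u$ is free, which is precisely the unresolved gap in the previous paragraph. The paper avoids the $\lambda\rr^2/8$ term entirely in Case 1 by invoking the \emph{first} part of \Cref{lem:bigger-r-okay} with $\gamma=\lambda$ and the auxiliary regularization $\lambda_{r^*(\lambda)/2}$, yielding $\E Q(\tilde u)-\min_{\nrm u\le r^*(\lambda)/2}Q(u)\le\epsilon(\lambda)$ and then using $r^*(\lambda)/2\ge\rr/2$ to enlarge the ball. Replacing your Case 1 step with the paper's and adopting the Jensen-based transfer would close both gaps.
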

\begin{proof}
First, 
\begin{align}
\E& Q\prn*{\min\crl*{1,\ \frac{5\rr}{\nrm{\tilde{u}}}}\tilde{u}} - \min_{u:\nrm{u}\leq \frac{1}{2}\rr} Q(u) \nonumber\\
&= \E Q\prn*{\prn*{1 - \min\crl*{1,\ \frac{5\rr}{\nrm{\tilde{u}}}}}0 + \min\crl*{1,\ \frac{5\rr}{\nrm{\tilde{u}}}}\tilde{u}} - \min_{u:\nrm{u}\leq \frac{1}{2}\rr} Q(u)  \\
&\leq \E\brk*{\prn*{1 - \min\crl*{1,\ \frac{5\rr}{\nrm{\tilde{u}}}}} \prn*{Q(0) - \min_{u:\nrm{u}\leq \frac{1}{2}\rr} Q(u)} + \min\crl*{1,\ \frac{5\rr}{\nrm{\tilde{u}}}}\prn*{Q(\tilde{u}) - \min_{u:\nrm{u}\leq \frac{1}{2}\rr} Q(u)}}  \\
&\leq \E\brk*{\frac{\nrm{\tilde{u}}}{\nrm{\tilde{u}} + 5\rr}} \prn*{Q(0) - \min_{u:\nrm{u}\leq \frac{1}{2}\rr} Q(u)} + \E Q(\tilde{u}) - \min_{u:\nrm{u}\leq \frac{1}{2}\rr} Q(u) \\
&\leq \frac{\E\nrm{\tilde{u}}}{\E\nrm{\tilde{u}} + 5\rr} \prn*{Q(0) - \min_{u:\nrm{u}\leq \frac{1}{2}\rr} Q(u)} + \E Q(\tilde{u}) - \min_{u:\nrm{u}\leq \frac{1}{2}\rr} Q(u).
\end{align}
For the first inequality we used the convexity of $Q$, and for the final inequality we used Jensen's inequality on the concave function $x \mapsto \frac{x}{x+5\rr}$.
Next, we bound
\begin{align}
\E \nrm{\tilde{u}} 
&\leq r^*(\lambda) + \E\nrm{\tilde{u} - u^*_\lambda} \\
&\leq 4\rr + \sqrt{\E\nrm{\tilde{u} - u^*_\lambda}^2} \\
&\leq 4\rr + \sqrt{\frac{2}{\lambda}\E Q_\lambda(\tilde{u}) - Q_\lambda^*} \\
&\leq 4\rr + \sqrt{\frac{2\epsilon(\lambda)}{\lambda}} \\
&\leq 4\rr + \sqrt{\frac{r^*(\lambda)^2 + \rr^2}{400}} \\
&\leq 4\rr + \sqrt{\frac{17\rr^2}{400}} \\
&< 5\rr.
\end{align}
Therefore, 
\begin{equation}
\E Q\prn*{\min\crl*{1,\ \frac{5\rr}{\nrm{\tilde{u}}}}\tilde{u}} - \min_{u:\nrm{u}\leq \frac{1}{2}\rr} Q(u) 
\leq \frac{1}{2}\prn*{Q(0) - \min_{u:\nrm{u}\leq \frac{1}{2}\rr} Q(u)} + \E Q(\tilde{u}) - \min_{u:\nrm{u}\leq \frac{1}{2}\rr} Q(u).
\end{equation}
If $r^*(\lambda) \geq \rr$, then by the first part of \Cref{lem:bigger-r-okay}
\begin{equation}
\E Q(\tilde{u}) - \min_{u:\nrm{u}\leq \frac{1}{2}\rr} Q(u) \leq \E Q(\tilde{u}) - \min_{u:\nrm{u}\leq \frac{1}{2}r^*(\lambda)} Q(u) \leq \epsilon(\lambda).
\end{equation}
Otherwise, by the second part of \Cref{lem:bigger-r-okay}
\begin{equation}
\E Q(\tilde{u}) - \min_{u:\nrm{u}\leq \frac{1}{2}\rr} Q(u) = \E Q(\tilde{u}) - Q\prn{u^*_{\lambda_{\frac{1}{2}\rr}}} \leq \epsilon(\lambda) + \frac{\lambda\nrm{u^*_{\lambda_{\frac{1}{2}\rr}}}^2}{2} = \epsilon(\lambda) + \frac{\lambda \rr^2}{8}\ .
\end{equation}
\end{proof}

We are now ready to prove \Cref{lem:constrained-quadratic-correctness}.
\cqscorrectness*

\begin{proof}
First, we note that $\abs*{\Lambda_1} = N$, and in each iteration either the algorithm terminates or $\Lambda_{i+1}$ is chosen such that $\abs*{\Lambda_{i+1}} \leq \frac{1}{2}\abs*{\Lambda_i}$. Therefore, the algorithm terminates after at most $\ceil{\log_2 N}$ iterations.

By \Cref{lem:good-lambda-in-set}, either $\lmin \geq \lambda_{3\rr}$ or there exists $\lambda \in \Lambda_1$ for some $\lambda$ such that $r^*(\lambda) \in \brk*{2\rr,3\rr}$. If such a $\lambda$ exists, we denote this (not necessarily unique) value $\lambda^*$.

By \Cref{lem:subproblem-indicators-correct-whp} and the union bound, the following holds for all iteration $i = 1,2,\dots$ with probability at least $1 - 2\ceil{\log_2 N}\exp\prn*{-\frac{C}{8}}$:
\begin{align}
r^*(\lambda^{(i)}) \in \brk*{2\rr,\,3\rr} &\implies \sum_{c=1}^C \mathbbm{1}\crl*{\nrm*{\tilde{u}^{(i,c)}} \in \brk*{\frac{3}{2}\rr,\,\frac{7}{2}\rr}} > \frac{C}{2} \label{eq:constrained-quadratic-case1}\\
r^*(\lambda^{(i)}) \not\in \brk*{\rr,\,4\rr} &\implies \sum_{c=1}^C \mathbbm{1}\crl*{\nrm*{\tilde{u}^{(i,c)}} \in \brk*{\frac{3}{2}\rr,\,\frac{7}{2}\rr}} \leq \frac{C}{2} \label{eq:constrained-quadratic-case2}\\
r^*(\lambda^{(i)}) < 2\rr &\implies \sum_{c=1}^C \mathbbm{1}\crl*{\nrm*{\tilde{u}^{(i,c)}} \leq \frac{5}{2}\rr} > \frac{C}{2} \label{eq:constrained-quadratic-case3}\\
r^*(\lambda^{(i)}) > 3\rr &\implies \sum_{c=1}^C \mathbbm{1}\crl*{\nrm*{\tilde{u}^{(i,c)}} > \frac{5}{2}\rr} > \frac{C}{2}\ .\label{eq:constrained-quadratic-case4}
\end{align}
For most of the rest of the proof, we condition on this event, which we denote $E$.

Under $E$, if $\lambda^{(i)} = \lambda^*$, then the algorithm will terminate on Line \ref{algline:lambda-accepted}, and even if $\lambda^{(i)} \neq \lambda^*$, if the algorithm terminates on Line \ref{algline:lambda-accepted}, then $\lambda^{(i)} \in \brk*{\rr,4\rr}$. In either case, by the first part of \Cref{lem:projection-okay}
\begin{equation}
\E Q(\hat{u}) - \min_{u:\nrm{u}\leq\frac{1}{2}\rr} Q(u) \leq \frac{1}{2}\prn*{Q(0) - \min_{u:\nrm{u}\leq\frac{1}{2}\rr} Q(u)} + \epsilon(\lambda^{(i)}).
\end{equation}
Finally, since $r^*(\lambda^{(i)}) \leq 4\rr$, $\lambda^{(i)} \geq \lambda_{4\rr}$.

If the algorithm instead updates 
\begin{equation}
\Lambda_{i+1} = \Lambda_i \setminus \crl*{\lambda \in \Lambda_i\,:\, \lambda \geq \lambda^{(i)}}
\end{equation}
as on Line \ref{algline:lower}, then conditioned on $E$,
\begin{equation}
\sum_{c=1}^C \mathbbm{1}\crl*{\nrm*{\tilde{u}^{(i,c)}} \in \brk*{\frac{3}{2}\rr,\,\frac{7}{2}\rr}} \leq \frac{C}{2} < \sum_{c=1}^C \mathbbm{1}\crl*{\nrm*{\tilde{u}^{(i,c)}} \leq \frac{5}{2}\rr}
\end{equation}
implies $\lambda^{(i)} < 2\rr$. By \Cref{lem:norm-monotone}, since $r^*(\lambda^*) \geq 2\rr > r^*(\lambda^{(i)})$, $\lambda^* < \lambda^{(i)}$ and therefore $\lambda^* \in \Lambda_{i+1}$.

If the algorithm instead updates 
\begin{equation}
\Lambda_{i+1} = \Lambda_i \setminus \crl*{\lambda \in \Lambda_i\,:\, \lambda \leq \lambda^{(i)}}
\end{equation}
as on Line \ref{algline:higher}, then conditioned on $E$
\begin{equation}
\sum_{c=1}^C \mathbbm{1}\crl*{\nrm*{\tilde{u}^{(i,c)}} \in \brk*{\frac{3}{2}\rr,\,\frac{7}{2}\rr}} \leq \frac{C}{2} < \sum_{c=1}^C \mathbbm{1}\crl*{\nrm*{\tilde{u}^{(i,c)}} > \frac{5}{2}\rr}
\end{equation}
implies $\lambda^{(i)} > 3\rr$. By \Cref{lem:norm-monotone}, since $r^*(\lambda^*) \leq 3\rr < r^*(\lambda^{(i)})$, we have $\lambda^* > \lambda^{(i)}$, and therefore $\lambda^* \in \Lambda_{i+1}$.

Finally, $E$ implies that the algorithm will never reach Line \ref{algline:return-0}.

Therefore, conditioned on $E$, if $\lambda^* \in \Lambda_1$, then the algorithm will never remove $\lambda^*$ from the set of $\lambda$'s under consideration, and it will eventually terminate on Line \ref{algline:lambda-accepted} by returning a point such that 
\begin{equation}
\E Q(\hat{u}) - \min_{u:\nrm{u}\leq\frac{1}{2}\rr} Q(u) \leq \frac{1}{2}\prn*{Q(0) - \min_{u:\nrm{u}\leq\frac{1}{2}\rr} Q(u)} + \epsilon(\lambda^{(i)}).
\end{equation}
Otherwise, if such a $\lambda^*$ does not exist and the algorithm does not terminate on Line \ref{algline:lambda-accepted}, then it terminates on Line \ref{algline:use-lmin} using $\lmin \geq \lambda_{3\rr} \geq \lambda_{4\rr}$, which implies $r^*(\lmin) \leq 4\rr$. Therefore, by the second part of \Cref{lem:projection-okay}, 
\begin{equation}
\E Q(\tilde{u}) - \min_{u:\nrm{u}\leq \frac{1}{2}\rr}Q(u) \leq \frac{1}{2}\prn*{Q(0) - \min_{u:\nrm{u}\leq\frac{1}{2}\rr} Q(u)} + \epsilon(\lmin) + \frac{\lmin \rr^2}{8}\ .
\end{equation}
Therefore since $\epsilon(\lambda)$ is decreasing in $\lambda$, conditioned on $E$ the algorithm's output satisfies
\begin{equation}
\E Q(\tilde{u}) - \min_{u:\nrm{u}\leq \frac{1}{2}\rr}Q(u) \leq \frac{1}{2}\prn*{Q(0) - \min_{u:\nrm{u}\leq\frac{1}{2}\rr} Q(u)} + \epsilon(\lambda_{4\rr}) + \frac{\lmin \rr^2}{8}\ .
\end{equation}

We now consider the case that $E$ does not hold. In this case, the algorithm's output is guaranteed to have norm at most $5\rr$. Therefore, 
\begin{align}
Q(\hat{u}) = \frac{\delta(5\rr)}{2}\hat{u}^\top \nabla^2 F(x) \hat{u} + \nabla F(x)^\top \hat{u} &\leq \frac{25\delta(5\rr)\nrm{\nabla^2 F(x)}_2\rr^2}{2} + 5\rr\nrm{\nabla F(x)} \\
&= Q(0) + \frac{25\delta(5\rr)\nrm{\nabla^2 F(x)}_2\rr^2}{2} + 5\rr\nrm{\nabla F(x)}.
\end{align}
Therefore, conditioned on $\lnot E$,
\begin{equation}
Q(\hat{u}) - \min_{u:\nrm{u}\leq\frac{1}{2}\rr} Q(u) \leq Q(0) - \min_{u:\nrm{u}\leq\frac{1}{2}\rr} Q(u) + \frac{25\delta(5\rr)\nrm{\nabla^2 F(x)}_2\rr^2}{2} + 5\rr\nrm{\nabla F(x)}.
\end{equation}

We conclude by noting that
\begin{align}
\E Q(\hat{u})& - \min_{u:\nrm{u}\leq\frac{1}{2}\rr} Q(u) \nonumber\\
&= \E\brk*{Q(\hat{u})\,|\, E}\P(E) + \E\brk*{Q(\hat{u})\,|\, \lnot E}\P(\lnot E) - \min_{u:\nrm{u}\leq\frac{1}{2}\rr} Q(u) \\
&\leq \frac{1 + \P(\lnot E)}{2}\prn*{Q(0) - \min_{u:\nrm{u}\leq\frac{1}{2}\rr} Q(u)} + \epsilon(\lambda_{4\rr}) + \frac{\lmin \rr^2}{8} \\
&\qquad+ \prn*{\frac{25\delta(5\rr)\nrm{\nabla^2 F(x)}_2\rr^2}{2} + 5\rr\nrm{\nabla F(x)}}\P(\lnot E) \\
&\leq \frac{1 + 2\ceil{\log_2 N}\exp\prn*{-\frac{C}{8}}}{2}\prn*{Q(0) - \min_{u:\nrm{u}\leq\frac{1}{2}\rr} Q(u)} + \epsilon(\lambda_{4\rr}) + \frac{\lmin \rr^2}{8} \nonumber\\
&\qquad+ \prn*{\frac{25\delta(5\rr)\nrm{\nabla^2 F(x)}_2\rr^2}{2} + 5\rr\nrm{\nabla F(x)}}\cdot 2\ceil{\log_2 N}\exp\prn*{-\frac{C}{8}}
\end{align}
Using the fact that 
\begin{equation}
C \geq 8\log\prn*{\ceil{\log_2 N}\prn*{4 + \frac{8\delta(5\rr)\nrm{\nabla^2 F(x)}_2}{\lmin} + \frac{80\nrm{\nabla F(x)}}{\lmin \rr}}}
\end{equation}
we conclude
\begin{equation}
\E Q(\hat{u}) - \min_{u:\nrm{u}\leq\frac{1}{2}\rr} Q(u) \leq \frac{3}{4}\prn*{Q(0) - \min_{u:\nrm{u}\leq\frac{1}{2}\rr} Q(u)} + \epsilon(\lambda_{4\rr}) + \frac{\lmin \rr^2}{4}\ .
\end{equation}
This completes the proof.
\end{proof}

\section{Proof of \Cref{lem:sgd-with-multiplicative-noise}}\label{app:proof-sgd-with-multiplicative-noise}
\sgdwithmultnoise*
\begin{proof}
We will use $\bu_k = \frac{1}{M}\sum_{m=1}^M u_k^m$ to denote the average of each independent run of SGD's $k^{\textrm{th}}$ iterate. This quantity is never explicitly computed until the end, but we can nevertheless use it for our analysis. Likewise, we will use $\bg_k = \frac{1}{M}\sum_{m=1}^M \gamma(u_k^m;z_k^m,z_k^{'m})$ to denote the average of the stochastic gradients of $Q_\lambda(u)$ computed at time $k$, whereby we recall that $\gamma(u_k^m;z_k^m,z_k^{'m}) \defeq \bar{\xi}h(x,u_k^m;z_k^{'m})+\lambda u_k^m+g(x;z_k^m)$ as defined in \Cref{alg:one-shot-averaging}, along with the requisite oracle access as described in \Cref{alg:quad-grad-access}. 

We also have that, by Assumptions \ref{assump:a1}(c) and \ref{assump:a2}(b), for Case 1:
\begin{equation*}\E_{z_k^m,z_k^{'m}} [\gamma(u_k^m;z_k^m,z_k^{'m})] = \nabla Q_\lambda(u)\ \textrm{and}\ \E_{z_k^m,z_k^{'m}} \nrm*{\gamma(u_k^m;z_k^{m},z_k^{'m}) - \nabla Q_\lambda(u_k^m)}^2 \leq \sigma^2 + \rho^2\nrm*{u_k^m}^2,
\end{equation*}
while for Case 2 we have:
\begin{equation*}\E_{z_k^m} [\gamma(u_k^m;z_k^m,z_k^{m})] = \nabla Q_\lambda(u)\ \textrm{and}\ \E_{z_k^m} \nrm*{\gamma(u_k^m;z_k^{m},z_k^{m}) - \nabla Q_\lambda(u_k^m)}^2 \leq 2\sigma^2 + 2\rho^2\nrm*{u_k^m}^2,
\end{equation*}
where we used the fact that $\nrm*{a+b}^2 \leq 2\nrm*{a}^2+2\nrm*{b}^2$ for any $a,b \in \R^d$, and so in either case the variance term is bounded by $2\sigma^2 + 2\rho^2\nrm*{u_k^m}^2$.

A key feature of these stochastic gradients of $Q_\lambda (u)$, which we will use frequently, is that by the linearity of $\nabla Q_\lambda(u) = (\bar{\xi}\nabla^2F(x)+\lambda \boldI)u + \nabla F(x)$, 
\begin{equation}
\E \bg_k = \frac{1}{M}\sum_{m=1}^M \E \gamma(u_k^m;z_k^m,z_k^{'m}) = \frac{1}{M}\sum_{m=1}^M \E \nabla Q_\lambda(u_k^m) = \nabla Q_\lambda(\E \bu_k).
\end{equation}
We begin by expanding,
\begin{align}
\E&\nrm*{\bu_{k+1} - u^*}^2 \nonumber\\
&= \E\nrm*{\bu_k - u^*}^2 + \eta_k^2\E\nrm*{\bg_k}^2 - 2\eta_k\E\inner{\nabla Q_\lambda(\bu_k)}{\bu_k - u^*} \\
&= \E\nrm*{\bu_k - u^*}^2 + \eta_k^2\E\nrm*{\nabla Q_\lambda(\bu_k)}^2 - 2\eta_k\brk*{Q_\lambda(\bu_k) - Q_\lambda^* + \frac{\lambda}{2}\nrm*{\bu_k - u^*}^2} + \eta_k^2\E\nrm*{\bg_k - \nabla Q_\lambda(\bu_k)}^2 \\
&\leq (1 - \eta_k\lambda)\E\nrm*{\bu_k - u^*}^2 - 2\eta_k(1 - (\bar{\xi}H+\lambda)\eta_k)\E\brk*{Q_\lambda(\bu_k) - Q_\lambda^*} + \eta_k^2\prn*{\frac{2\sigma^2}{M} + \frac{2\rho^2}{M^2}\sum_{m=1}^M\E\nrm*{u_k^m}^2}.
\end{align}
Since $\eta_k \leq \frac{1}{2(\bar{\xi}H+\lambda)}$ for all $k$, $(1 - (\bar{\xi}H+\lambda)\eta_k) \geq \frac{1}{2}$ so we can rearrange
\begin{align}
\E\brk*{Q_\lambda(\bu_k) - Q_\lambda^*} \leq \prn*{\frac{1}{\eta_k} - \lambda}\E\nrm*{\bu_k - u^*}^2 - \frac{1}{\eta_k}\E\nrm*{\bu_{k+1} - u^*}^2 + \eta_k\prn*{\frac{2\sigma^2}{M} + \frac{2\rho^2}{M^2}\sum_{m=1}^M\E\nrm*{u_k^m}^2}. \label{eq:sgd-sc-quadratics-multiplicative-noise}
\end{align}
From here, we note that since $u_k^1,\dots,u_k^M$ are i.i.d.,
\begin{align}
\frac{1}{M^2}\sum_{m=1}^M\E\nrm*{u_k^m}^2 
&= \frac{1}{M^2}\sum_{m=1}^M\brk*{\E\nrm*{u_k^m - \E u_k^m}^2 + \nrm*{\E u_k^m}^2} \\
&= \E\nrm*{\bu_k^m - \E \bu_k^m}^2 + \frac{1}{M}\nrm*{\E \bu_k}^2 \\
&\leq \E\nrm*{\bu_k - u^*}^2 + \frac{1}{M}\nrm*{\E \bu_k}^2 \\
&\leq \E\nrm*{\bu_k - u^*}^2 + \frac{2}{M}\nrm*{\E \bu_k - u^*}^2 + \frac{2}{M}\nrm*{u^*}^2.
\end{align}
Furthermore, for each $k$, since $\eta_{k-1} \leq \frac{1}{\bar{\xi}H+\lambda}$
\begin{align}
\nrm*{\E \bu_k - u^*}^2
&\leq \nrm*{\E \bu_{k-1} - u^*}^2 + \eta_{k-1}^2\nrm*{\nabla Q_\lambda(\E\bu_{k-1})}^2 - 2\eta_{k-1}\inner{\nabla Q_\lambda(\E \bu_{k-1})}{\bu_{k-1} - u^*} \\
&\leq \nrm*{\E \bu_{k-1} - u^*}^2 + 2(\bar{\xi}H+\lambda)\eta_{k-1}^2\brk*{Q_\lambda(\E\bu_{k-1}) - Q_\lambda^*} - 2\eta_{k-1}\brk*{Q_\lambda(\E\bu_{k-1}) - Q_\lambda^*} \\
&\leq \nrm*{\E \bu_{k-1} - u^*}^2 \leq \nrm*{\E \bu_0 - u^*}^2 = \nrm{u^*}^2.
\end{align}
Therefore, returning to \eqref{eq:sgd-sc-quadratics-multiplicative-noise},
\begin{align}
\E\brk*{Q_\lambda(\bu_k) - Q_\lambda^*} 
&\leq \prn*{\frac{1}{\eta_k} - \lambda}\E\nrm*{\bu_k - u^*}^2 - \frac{1}{\eta_k}\E\nrm*{\bu_{k+1} - u^*}^2 + \frac{2\eta_k\sigma^2}{M} + 2\eta_k\rho^2\prn*{\E\nrm*{\bu_k - u^*}^2 + \frac{4}{M}\nrm*{u^*}^2} \\
&= \prn*{\frac{1}{\eta_k} - \lambda + \eta_k\rho^2}\E\nrm*{\bu_k - u^*}^2 - \frac{1}{\eta_k}\E\nrm*{\bu_{k+1} - u^*}^2 + \frac{2\eta_k\prn*{\sigma^2 + \rho^2\nrm*{u^*}^2}}{M}\ .
\end{align}

We first consider the case $K \leq \frac{2\max\crl*{\bar{\xi}H+\lambda,\frac{\rho^2}{\lambda}}}{\lambda}$, so that
\begin{equation*}\eta_k = \eta = \min\crl*{\frac{1}{2(\bar{\xi}H+\lambda)},\ \frac{\lambda}{2\rho^2}}\quad \textrm{ and }\quad w_k = (1-\lambda\eta + \eta^2\rho^2)^{-k-1}.
\end{equation*}
Then,
\begin{align}
\E &Q_\lambda\prn*{\frac{1}{\sum_{k=0}^K w_k}\sum_{k=0}^K w_k \bu_k} - Q_\lambda^* \nonumber\\
&\leq \frac{1}{\eta\sum_{k=0}^K w_k}\sum_{k=0}^K\brk*{ w_k\prn*{1 - \eta\lambda + \eta^2\rho^2}\E\nrm*{\bu_k - u^*}^2 - w_k\E\nrm*{\bu_{k+1} - u^*}^2} + \frac{2\eta\prn*{\sigma^2 + \rho^2\nrm*{u^*}^2}}{M} \\
&= \frac{1}{\eta\sum_{k=0}^K w_k}\sum_{k=0}^K\brk*{ \prn*{1 - \eta\lambda + \eta^2\rho^2}^{-k}\E\nrm*{\bu_k - u^*}^2 - \prn*{1 - \eta\lambda + \eta^2\rho^2}^{-(k+1)}\E\nrm*{\bu_{k+1} - u^*}^2}\\
&\qquad + \frac{2\eta\prn*{\sigma^2 + \rho^2\nrm*{u^*}^2}}{M} \\
&\leq \frac{\E\nrm*{\bu_0 - u^*}^2}{\eta\sum_{k=0}^K \prn*{1 - \eta\lambda + \eta^2\rho^2}^{-(k+1)}} + \frac{2\eta\prn*{\sigma^2 + \rho^2\nrm*{u^*}^2}}{M} \\
&\leq \frac{\nrm*{u^*}^2}{\eta\max\crl*{K,\,\prn*{1 - \eta\lambda + \eta^2\rho^2}^{-(K+1)}}} + \frac{2\eta\prn*{\sigma^2 + \rho^2\nrm*{u^*}^2}}{M} \\
&\leq 2\max\crl*{\bar{\xi}H+\lambda, \frac{\rho^2}{\lambda}}\nrm*{u^*}^2\min\crl*{\frac{1}{K},\,\exp\prn*{-\frac{K+1}{4}\min\crl*{\frac{\lambda}{\bar{\xi}H+\lambda},\ \frac{\lambda^2}{\rho^2}}}^{K+1}}\nonumber \\
&\qquad\qquad + \frac{2(\sigma^2 + \rho^2\nrm*{u^*}^2)}{\lambda MK}\ .\label{eq:sgd-with-multiplicative-noise-case1}
\end{align}
For the last line, we used that $K \leq \frac{2\max\crl*{\bar{\xi}H+\lambda,\frac{\rho^2}{\lambda}}}{\lambda} = \frac{1}{\eta\lambda}$.

In the second case $\prn*{K > \frac{2\max\crl*{\bar{\xi}H+\lambda,\frac{\rho^2}{\lambda}}}{\lambda}}$, we consider the first $K/2$ iterations where \begin{equation*}
    \eta_k = \eta = \min\crl*{\frac{1}{2(\bar{\xi}H+\lambda)},\frac{\lambda}{2\rho^2}}:
    \end{equation*}
\begin{align}
\E Q_\lambda(\bu_{K/2-1}) - Q_\lambda^* 
&\leq \prn*{\frac{1}{\eta} - \lambda + \eta\rho^2}\E\nrm*{\bu_{K/2-1} - u^*}^2 - \frac{1}{\eta}\E\nrm*{\bu_{K/2} - u^*}^2 + \frac{2\eta\prn*{\sigma^2 + \rho^2\nrm*{u^*}^2}}{M} \\
\implies \E\nrm*{\bu_{K/2} - u^*}^2
&\leq \prn*{1 - \eta\lambda + \eta^2\rho^2}\E\nrm*{\bu_{K/2-1} - u^*}^2 + \frac{2\eta^2\prn*{\sigma^2 + \rho^2\nrm*{u^*}^2}}{M} \\
&= \prn*{1 - \eta\lambda + \eta^2\rho^2}^{K/2}\E\nrm*{\bu_0 - u^*}^2 + \frac{2\eta^2\prn*{\sigma^2 + \rho^2\nrm*{u^*}^2}}{M}\sum_{k=0}^{K/2-1}\prn*{1 - \eta\lambda + \eta^2\rho^2}^k \\
&= \nrm*{u^*}^2\prn*{1 - \eta\lambda + \eta^2\rho^2}^{K/2} + \frac{2\eta^2\prn*{\sigma^2 + \rho^2\nrm*{u^*}^2}}{M}\frac{1 - \prn*{1 - \eta\lambda + \eta^2\rho^2}^{K/2}}{\eta\lambda - \eta^2\rho^2} \\
&\leq \nrm*{u^*}^2\prn*{1 - \eta\lambda + \eta^2\rho^2}^{K/2} + \frac{2\eta\prn*{\sigma^2 + \rho^2\nrm*{u^*}^2}}{M(\lambda - \eta\rho^2)} \\
&\leq \nrm*{u^*}^2\prn*{1 - \frac{1}{2}\eta\lambda}^{K/2} + \frac{4\eta\prn*{\sigma^2 + \rho^2\nrm*{u^*}^2}}{\lambda M}\ .\label{eq:sgd-halfway-iterate-distance}
\end{align}
This bounds the distance of the $(K/2)^{\textrm{th}}$ iterate to the optimum, from which we can upper bound the suboptimality of the averaged iterate. Let $W = \sum_{k=0}^K w_k = \sum_{k=K/2}^K w_k$. Then by the convexity of $Q_\lambda(u)$,
\begin{align}
\E &Q_\lambda\prn*{\frac{1}{W}\sum_{k=0}^K w_k \bu_k} - Q_\lambda^* \nonumber\\
&\leq \frac{1}{W}\sum_{k=0}^K\brk*{w_k\prn*{\frac{1}{\eta_k} - \lambda + \eta_k\rho^2}\E\nrm*{\bu_k - u^*}^2 - \frac{w_k}{\eta_k}\E\nrm*{\bu_{k+1} - u^*}^2 + \frac{2w_k\eta_k\prn*{\sigma^2 + \rho^2\nrm*{u^*}^2}}{M}} \\
&\leq \frac{w_{K/2}\prn*{\frac{1}{\eta_{K/2}} - \lambda + \eta_{K/2}\rho^2}\E\nrm*{\bu_{K/2} - u^*}^2}{W} \nonumber\\
&\qquad\qquad+ \frac{1}{W}\sum_{k=K/2+1}^K\brk*{\prn*{\frac{w_k}{\eta_k} - w_k\lambda + w_k\eta_k\rho^2 - \frac{w_{k-1}}{\eta_{k-1}}}\E\nrm*{\bu_k - u^*}^2 + \frac{2w_k\eta_k\prn*{\sigma^2 + \rho^2\nrm*{u^*}^2}}{M}}\ .
\end{align}
With our setting of $w_k = (a+k-K/2-1)$ and $\eta_k = \frac{4}{\lambda(a + k - K/2)}$ with $a = \frac{8}{\lambda}\max\crl*{\frac{\rho^2}{\lambda},\ \bar{\xi}H+\lambda}$, we have for $k > K/2$,
\begin{align}
\frac{\eta_{k-1}}{\eta_k} - \eta_{k-1}\lambda + \eta_{k-1}\eta_k\rho^2
&= \frac{a+k-K/2}{a+k-K/2-1} - \frac{4}{a+k-K/2-1} + \frac{16\rho^2}{\lambda^2(a+k-K/2)(a+k-K/2-1)} \\
&= \frac{a+k-K/2}{a+k-K/2-1} + \frac{1}{a+k-K/2-1}\prn*{2 - 4 + \frac{16\rho^2}{\lambda^2(a+k-K/2)}} \\
&\leq \frac{w_{k-1}}{w_k} + \frac{1}{a+k-K/2-1}\prn*{-2 + \frac{16\rho^2}{\lambda^2 a}} \\
&\leq \frac{w_{k-1}}{w_k}\ .
\end{align}
Therefore, we have
\begin{align}
\E Q_\lambda(\hat{x})& - Q_\lambda^* \nonumber\\
&\leq \frac{w_{K/2}\prn*{\frac{1}{\eta_{K/2}} - \lambda + \eta_{K/2}\rho^2}\E\nrm*{\bu_{K/2} - u^*}^2}{W} + \frac{2}{W}\sum_{k=K/2+1}^K\frac{w_k\eta_k\prn*{\sigma^2 + \rho^2\nrm*{u^*}^2}}{M} \\
&= \frac{(a-1)\prn*{\frac{\lambda a}{4} - \lambda + \frac{4\rho^2}{\lambda a}}\E\nrm*{\bu_{K/2} - u^*}^2}{W} + \frac{2(\sigma^2 + \rho^2\nrm*{u^*}^2)}{WM}\sum_{k=K/2+1}^K \frac{4(a+k-K/2 - 1)}{\lambda(a + k - K/2)} \\
&\leq \frac{\lambda a^2\E\nrm*{\bu_{K/2} - u^*}^2}{4W} + \frac{8K(\sigma^2 + \rho^2\nrm*{u^*}^2)}{\lambda WM} \\
&\leq \frac{\lambda a^2}{4W}\prn*{\nrm*{u^*}^2\prn*{1 - \frac{1}{2}\eta\lambda}^{K/2} + \frac{4\eta\prn*{\sigma^2 + \rho^2\nrm*{u^*}^2}}{\lambda M}} + \frac{8K(\sigma^2 + \rho^2\nrm*{u^*}^2)}{\lambda WM} \\
&= \frac{\lambda a^2\nrm*{u^*}^2}{4W}\prn*{1 - \frac{1}{2}\eta\lambda}^{K/2}  + \frac{(\sigma^2 + \rho^2\nrm*{u^*}^2)}{\lambda WM}\prn*{8K + \eta \lambda a^2} \\
&= \frac{\lambda a^2\nrm*{u^*}^2}{4W}\prn*{1 - \min\crl*{\frac{\lambda}{4(\bar{\xi}H+\lambda)},\ \frac{\lambda^2}{4\rho^2}}}^{K/2} + \frac{\prn*{8K + 4a}(\sigma^2 + \rho^2\nrm*{u^*}^2)}{\lambda WM}\ ,
\end{align}
where, for the third-to-last line we used \eqref{eq:sgd-halfway-iterate-distance}, and the last line we used that $\eta = \min\crl*{\frac{1}{2(\bar{\xi}H+\lambda)},\ \frac{\lambda}{2\rho^2}} = \frac{4}{\lambda a}$. Finally, we lower bound
\begin{equation}
W = \sum_{k=K/2}^K (a + k - K/2) = \sum_{i=0}^{K/2} a + i \geq \frac{1}{2}aK + \frac{1}{8}K^2.
\end{equation}
Thus, since $K \geq \frac{2}{\lambda}\max\crl*{\frac{\rho^2}{\lambda},\ \bar{\xi}H+\lambda} = \frac{a}{4}$, we conclude
\begin{align}
\E Q_\lambda(\hat{x}) - Q_\lambda^* 
&\leq 96\lambda\nrm*{u^*}^2\prn*{1 - \min\crl*{\frac{\lambda}{4(\bar{\xi}H+\lambda)},\ \frac{\lambda^2}{4\rho^2}}}^{K/2} + \frac{96(\sigma^2 + \rho^2\nrm*{u^*}^2)}{\lambda MK}\ .
\end{align}
Combining this and \eqref{eq:sgd-with-multiplicative-noise-case1} completes the proof.
\end{proof}

\section{Proof of \Cref{thm:combined-alg}}\label{app:combined-alg}

\combinedalg*

\begin{proof}
We first recall the hyperparameters from \Cref{tab:hyperparams}, whose settings we will refer to throughout the course of the proof:
\begin{table}[H]
\def\arraystretch{1.5}%
\centering
\begin{tabular}{ l l } 
 \toprule
 Hyperparameter Setting  & Description\\
 \midrule 
$T \defeq \Bigg\lfloor\frac{R}{4\zeta}\log^2\prn*{\prn*{\frac{R}{\zeta}}}\Bigg\rfloor$ (for $\zeta = 4096 + 4\prn*{80 + 32\log K + 24\log(1+2\alpha B)}^2$) & Main iterations\\[0.3em]
$\beta := 0$ & Momentum\\
 $\rr \defeq \min\crl*{\frac{32B}{T}\log(TK),\,\frac{1}{5\alpha}}$ & Trust-region radius\\
 $\bar{\xi} \defeq \exp(\alpha \rr)$ & Local stability\\
 $\lmin \defeq \max\crl*{\frac{2eH}{K-2},\,\frac{2\rho}{\sqrt{K}},\, \frac{32eH\log(51200)}{K},\, \frac{4\rho\sqrt{2\log(51200)}}{\sqrt{K}},\,\frac{320\sqrt{2}\rho}{\sqrt{MK}},\,\frac{320\sigma}{\rr\sqrt{MK}},\, \frac{8eH}{K-16}}$ & Regularization bound\\
 $N \defeq \left\lceil1 + \frac{5}{2}\log\frac{H(B + 5T\rr)}{3\lmin\rr}\right\rceil$ & Binary search iterations\\
 $C \defeq \left\lceil 8\log\prn*{\ceil{\log_2 N}\prn*{4 + \frac{e H}{\lmin} + \frac{80H(B + 5T\rr)}{\lmin \rr}}} \right\rceil$ & Reg.\ quadratic repetitions\\[0.8em]
 \toprule 
\end{tabular}
\end{table}

We also note that since all of the updates $\updatet_t$ have norm at most $5\rr$, $\nrm{x_t - x^*} \leq B + 5T\rr$ for all $t$, and therefore by the $H$-smoothness of $F$, $\nrm{\nabla F(x_t)} \leq H(B + 5T\rr)$ for all $t$. Furthermore, since $F$ is $H$-smooth and $\rr \leq \frac{1}{5\alpha}$, $\bar{\xi} \nabla^2 F(x_t) \preceq e H \boldI$ for all $t$. Therefore, our settings of $N$ and $C$ satisfy the conditions of \Cref{lem:constrained-quadratic-correctness}, and for each $t$,
\begin{equation}\label{eq:alg2-guarantee1}
\E Q_t(\updatet_t) - \min_{\Delta x:\nrm{\Delta x}\leq\frac{1}{2}\rr} Q_t(\Delta x) \leq \frac{3}{4}\prn*{Q_t(0) - \min_{\Delta x:\nrm{\Delta x}\leq\frac{1}{2}\rr} Q_t(\Delta x)} + \epsilon(\lambda_{4\rr}) + \frac{\lmin \rr^2}{4}
\end{equation}
as long as the error guarantee of \Cref{alg:one-shot-averaging} satisfies for all $\lambda \geq \lmin$
\begin{equation}
\E Q_\lambda(\hat{u}) - \min_u Q_\lambda(u) \leq \epsilon(\lambda) = \frac{\lambda(r^*(\lambda)^2 + \rr^2)}{800}\ .
\end{equation}
By \Cref{lem:sgd-with-multiplicative-noise}, since the objectives are such that $\nrm*{\bar{\xi} \nabla^2 F(x_t) + \lambda \boldI}_2 \leq e H +\lambda$ for all $t$ and $\lambda$, the output with optimally chosen stepsizes have error at most
\begin{align}
&\E  Q_\lambda(\hat{u}) - \min_u Q_\lambda(u)
\leq \tilde{\epsilon}(\lambda) \\
&\defeq 
\begin{cases}
2\max\crl*{eH+\lambda, \frac{\rho^2}{\lambda}}r^*(\lambda)^2 \exp\prn*{-\frac{K+1}{4}\min\crl*{\frac{\lambda}{eH + \lambda},\ \frac{\lambda^2}{\rho^2}}} + \frac{2(\sigma^2 + \rho^2r^*(\lambda)^2)}{\lambda MK} & K \leq \frac{2}{\lambda}\max\crl*{eH+\lambda,\, \frac{\rho^2}{\lambda}} \\
96\lambda r^*(\lambda)^2\exp\prn*{ -\frac{K}{8}\min\crl*{\frac{\lambda}{eH+\lambda},\ \frac{\lambda^2}{\rho^2}}} + \frac{96(\sigma^2 + \rho^2r^*(\lambda)^2)}{\lambda MK} & K > \frac{2}{\lambda}\max\crl*{eH+\lambda,\, \frac{\rho^2}{\lambda}}\ .
\end{cases}
\end{align}
With our choice of
\begin{equation}
\lmin = \max\crl*{\frac{2eH}{K-2},\,\frac{2\rho}{\sqrt{K}},\, \frac{32eH\log(51200)}{K},\, \frac{4\rho\sqrt{2\log(51200)}}{\sqrt{K}},\,\frac{320\sqrt{2}\rho}{\sqrt{MK}},\,\frac{320\sigma}{\rr\sqrt{MK}},\, \frac{8eH}{K-16}}\ ,
\end{equation}
we note that
\begin{equation}
K \geq \frac{2}{\lambda}\max\crl*{eH + \lambda,\,\frac{\rho^2}{\lambda}}\ ,
\end{equation}
so for $\lambda \geq \lmin$
\begin{equation}
\tilde{\epsilon}(\lambda) \leq 96\lambda r^*(\lambda)^2\exp\prn*{ -\frac{K}{8}\min\crl*{\frac{\lambda}{eH+\lambda},\ \frac{\lambda^2}{\rho^2}}} + \frac{96(\sigma^2 + \rho^2r^*(\lambda)^2)}{\lambda MK}\ .
\end{equation}
Furthermore, $K \geq 175$ and $\lambda\geq\lmin\geq \max\crl*{\frac{32eH\log(51200)}{K},\, \frac{4\rho\sqrt{2\log(51200)}}{\sqrt{K}}}$ implies 
\begin{equation}
96\exp\prn*{-\frac{K}{8}\min\crl*{\frac{\lambda}{eH+\lambda},\ \frac{\lambda^2}{\rho^2}}} \leq \frac{1}{1600}\ .
\end{equation}
Likewise, $\lambda\geq\lmin\geq \frac{320\sqrt{2}\rho}{\sqrt{MK}}$ implies 
\begin{equation}
\frac{96\rho^2}{\lambda^2 MK} \leq \frac{1}{1600}\ .
\end{equation}
Finally, $\lambda\geq\lmin\geq \frac{320\sigma}{\rr\sqrt{MK}}$ implies
\begin{equation}
\frac{96\sigma^2}{\lambda^2 MK} \leq \frac{\rr^2}{800}\ .
\end{equation}
Putting these together, we conclude that for $\lambda \geq \lmin$
\begin{equation}
\tilde{\epsilon}(\lambda) \leq \frac{\lambda(r^*(\lambda)^2 + \rr^2)}{800} = \epsilon(\lambda).
\end{equation}

Combining this with \eqref{eq:alg2-guarantee1}, we conclude that the output of \Cref{alg:constrained-quadratic} satisfies
\begin{align}
\E& Q_t(\updatet_t) - \min_{\Delta x:\nrm{\Delta x}\leq\frac{1}{2}\rr} Q_t(\Delta x) \nonumber\\
&\leq \frac{3}{4}\prn*{Q_t(0) - \min_{\Delta x:\nrm{\Delta x}\leq\frac{1}{2}\rr} Q_t(\Delta x)} \nonumber\\
&\qquad+ 512\lambda \rr^2 \exp\prn*{ -\frac{K}{8}\min\crl*{\frac{\lambda}{eH+\lambda},\ \frac{\lambda^2}{\rho^2}}} + \frac{96(\sigma^2 + 16\rho^2\rr^2)}{\lambda MK} + \frac{\lmin \rr^2}{4} \\
&\leq \frac{3}{4}\prn*{Q_t(0) - \min_{\Delta x:\nrm{\Delta x}\leq\frac{1}{2}\rr} Q_t(\Delta x)} \nonumber\\
&\qquad+ 512\lambda \rr^2 \exp\prn*{ -\frac{K}{8}\min\crl*{\frac{\lambda}{eH+\lambda},\ \frac{\lambda^2}{\rho^2}}} + \frac{\sigma\rr + 16\rho\rr^2}{2\sqrt{MK}} + \frac{\lmin \rr^2}{4}\ .
\end{align}

Now, we upper bound $\lambda \mapsto \lambda \exp\prn*{ -\frac{K}{8}\min\crl*{\frac{\lambda}{eH+\lambda},\ \frac{\lambda^2}{\rho^2}}}$ for $\lambda \in \Lambda_1$. First, 
\begin{equation}
\lambda \exp\prn*{ -\frac{K}{8}\min\crl*{\frac{\lambda}{eH+\lambda},\ \frac{\lambda^2}{\rho^2}}} = \max\crl*{\lambda \exp\prn*{ -\frac{\lambda K}{8eH+8\lambda}},\, \lambda\exp\prn*{ -\frac{\lambda^2 K}{8\rho^2}}}\ .
\end{equation}
Considering each term separately, 
\begin{equation}
\frac{d}{d\lambda}\brk*{\lambda \exp\prn*{ -\frac{\lambda K}{8eH+8\lambda}}} = \exp\prn*{-\frac{\lambda K}{8eH+8\lambda}}\prn*{1 - \frac{eHK\lambda}{8(eH+\lambda)^2}}\ .
\end{equation}
This is less than zero if $8(eH+\lambda)^2 \leq eHK\lambda$, i.e.,
\begin{equation}
\frac{eH}{16}\prn*{K-16 - \sqrt{(K-16)^2 - 16}} \leq \lambda \leq \frac{eH}{16}\prn*{K-16 + \sqrt{(K-16)^2 - 16}}.
\end{equation}
With our choice of $\lmin \geq \frac{8eH}{K-16}$, for any $\lambda \geq \lmin$ and $K \geq 175$,
\begin{equation}
\lambda \geq \frac{8eH}{K-16} \geq \frac{eH}{16}\prn*{K-16 - \sqrt{(K-16)^2 - 16}},
\end{equation}
so the left side of this inequality is satisfied. Thus, for $\lambda \in \Lambda_1$ such that $\lambda \leq \frac{eH}{16}\prn*{K-16 + \sqrt{(K-16)^2 - 16}}$, 
\begin{equation}
\lambda \exp\prn*{ -\frac{\lambda K}{8eH+8\lambda}} \leq \lmin \exp\prn*{ -\frac{\lmin K}{8eH+8\lmin}}\ .
\end{equation}
Also, if $\lambda > \frac{eH}{16}\prn*{K-16 + \sqrt{(K-16)^2 - 16}}$, then since $K \geq 175$
\begin{equation}
\lambda \exp\prn*{ -\frac{\lambda K}{8eH+8\lambda}} \leq \lambda\exp\prn*{ -\frac{\frac{eH(K-16)}{16}K}{8eH+\frac{eH(K-16)}{2}}} \leq \lambda\exp\prn*{ -\frac{K}{10}}\ .
\end{equation}
Furthermore, for $\lambda \in \Lambda_1$, 
\begin{equation}
\lambda \leq \lmin\prn*{\frac{3}{2}}^{N-1} \leq \frac{3\lmin}{2}\frac{H(B + 5T\rr)}{3\lmin \rr} = \frac{3H(B + 5T\rr)}{6 \rr}\ .
\end{equation}
Therefore, for any $\lambda \in \Lambda_1$ 
\begin{equation}
\lambda \exp\prn*{ -\frac{\lambda K}{8eH+8\lambda}} \leq \max\crl*{\lmin \exp\prn*{ -\frac{\lmin K}{8eH+8\lmin}},\,\frac{3H(B + 5T\rr)}{6 \rr}\exp\prn*{ -\frac{K}{10}}}\ .
\end{equation}

Similarly, 
\begin{equation}
\frac{d}{d\lambda}\brk*{\lambda \exp\prn*{ -\frac{\lambda^2 K}{8\rho^2}}} = \exp\prn*{ -\frac{\lambda^2 K}{8\rho^2}}\prn*{1 - \frac{\lambda^2 K}{4\rho^2}}\ .
\end{equation}
This is negative for all $\lambda \geq \lmin \geq \frac{2\rho}{\sqrt{K}}$, so
\begin{equation}
\lambda \exp\prn*{ -\frac{\lambda^2 K}{8\rho^2}} \leq \lmin \exp\prn*{ -\frac{\lmin^2 K}{8\rho^2}}\ .
\end{equation}
We conclude that
\begin{align}
\E& Q_t(\updatet_t) - \min_{\Delta x:\nrm{\Delta x}\leq\frac{1}{2}\rr} Q_t(\Delta x) \nonumber\\
&\leq \frac{3}{4}\prn*{Q_t(0) - \min_{\Delta x:\nrm{\Delta x}\leq\frac{1}{2}\rr} Q_t(\Delta x)} + \frac{\sigma\rr + 16\rho\rr^2}{2\sqrt{MK}} + \frac{\lmin \rr^2}{4} \nonumber\\
&\qquad+ 512\rr^2\max\crl*{\lmin \exp\prn*{ -\frac{\lmin K}{8eH+8\lmin}},\,\frac{3H(B + 5T\rr)}{6 \rr}\exp\prn*{ -\frac{K}{10}},\,\lmin \exp\prn*{ -\frac{\lmin^2 K}{8\rho^2}}}\ .
\end{align}

Now, because $F$ is $\alpha$-quasi-self-concordant, by \Cref{lem:qsc-sensitivity}, $F$ is $\exp(\alpha r)$-locally stable, so with our choice of $\rr \leq \frac{1}{5\alpha}$, we have that $\exp(\frac{1}{2}\rr) \exp(5\rr) \leq e^{1.1} \leq 4$. Thus, it follows from \Cref{lem:qsc-newton}, for $\theta = \frac{3}{4}$, combined with the guarantee on the output of \Cref{alg:constrained-quadratic} from \Cref{lem:constrained-quadratic-correctness}, that
\begin{align}
&\E F(x_T) - F^* \nonumber\\
&\leq \E\brk*{F(x_0) - F^*}\exp\prn*{-\frac{T\rr}{32B}} + \frac{32B}{\rr}\prn*{\frac{\sigma\rr + 16\rho\rr^2}{2\sqrt{MK}} + \frac{\lmin \rr^2}{4}} \nonumber\\
&\qquad+ \frac{32B}{\rr} \cdot 512\rr^2\max\crl*{\lmin \exp\prn*{ -\frac{\lmin K}{8eH+8\lmin}},\,\frac{3H(B + 5T\rr)}{6 \rr}\exp\prn*{ -\frac{K}{10}},\,\lmin \exp\prn*{ -\frac{\lmin^2 K}{8\rho^2}}} \\
&\leq \E\brk*{F(x_0) - F^*}\exp\prn*{-\frac{T\rr}{32B}} + \frac{32\sigma B + 512\rho B\rr}{2\sqrt{MK}} + 8\lmin B \rr \nonumber\\
&\qquad+ 2^{14}\max\crl*{\lmin B\rr \exp\prn*{ -\frac{\lmin K}{8eH+8\lmin}},\,\frac{3H(B^2 + 5TB\rr)}{6}\exp\prn*{ -\frac{K}{10}},\,\lmin B\rr \exp\prn*{ -\frac{\lmin^2 K}{8\rho^2}}} \\
&\leq \E\brk*{F(x_0) - F^*}\exp\prn*{-\frac{T\rr}{32B}} + \frac{32\sigma B + 512\rho B\rr}{2\sqrt{MK}} \nonumber\\
&\qquad+ \lmin B \rr \prn*{8 + 2^{14}\max\crl*{\exp\prn*{ -\frac{\lmin K}{8eH+8\lmin}},\,\frac{3H(B + 5T\rr)}{6\lmin \rr}\exp\prn*{ -\frac{K}{10}},\,\exp\prn*{ -\frac{\lmin^2 K}{8\rho^2}}}}\ .
\end{align}

We have, for a constant $c$,
\begin{align}
\lmin 
&= \max\crl*{\frac{2eH}{K-2},\,\frac{2\rho}{\sqrt{K}},\, \frac{32eH\log(51200)}{K},\, \frac{4\rho\sqrt{2\log(51200)}}{\sqrt{K}},\,\frac{320\sqrt{2}\rho}{\sqrt{MK}},\,\frac{320\sigma}{\rr\sqrt{MK}},\, \frac{8eH}{K-16}} \\
&= \max\crl*{\frac{32eH\log(51200)}{K},\, \frac{4\rho\sqrt{2\log(51200)}}{\sqrt{K}},\,\frac{320\sqrt{2}\rho}{\sqrt{MK}},\,\frac{320\sigma}{\rr\sqrt{MK}}} \\
&= c\cdot\max\crl*{\frac{H}{K},\, \frac{\rho}{\sqrt{K}},\,\frac{\sigma}{\rr\sqrt{MK}}}\ .
\end{align}

So, for a constant $c'$, and using $\E F(x_0) - F^* \leq \frac{HB^2}{2}$,
\begin{align}
&\E F(x_T) - F^* \nonumber\\
&\leq c'\cdot\bigg(HB^2\exp\prn*{-\frac{T\rr}{32B}} + \frac{\sigma B + \rho B\rr}{\sqrt{MK}} \nonumber\\
&\qquad+ \lmin B \rr \prn*{1 + \max\crl*{\exp\prn*{ -\frac{\lmin K}{H+\lmin}},\,\frac{H(B + T\rr)}{\lmin \rr}\exp\prn*{ -\frac{K}{10}},\,\exp\prn*{ -\frac{\lmin^2 K}{\rho^2}}}}\bigg) \\
&\leq c'\cdot\prn*{HB^2\exp\prn*{-\frac{T\rr}{32B}} + \frac{\sigma B + \rho B\rr}{\sqrt{MK}} + \max\crl*{\lmin B \rr,\,H(B^2 + TB\rr)\exp\prn*{ -\frac{K}{10}}}} \\
&= c'\cdot\bigg(HB^2\exp\prn*{-\frac{T\rr}{32B}} + \frac{\sigma B + \rho B\rr}{\sqrt{MK}} \nonumber\\
&\qquad+ \max\crl*{\frac{HB\rr}{K},\, \frac{\rho B\rr}{\sqrt{K}},\,\frac{\sigma B}{\sqrt{MK}},\,H(B^2 + TB\rr)\exp\prn*{ -\frac{K}{10}}}\bigg) \\
&= c'\cdot \prn*{HB^2\exp\prn*{-\frac{T\rr}{32B}} + \frac{\sigma B}{\sqrt{MK}} + \frac{HB\rr}{K} + \frac{\rho B\rr}{\sqrt{K}} + H(B^2 + TB\rr)\exp\prn*{ -\frac{K}{10}}}.
\end{align}
So, since $\rr = \min\crl*{\frac{32B}{T}\log(TK),\,\frac{1}{5\alpha}}$, and using the fact that, for $\zeta, a, b > 0$, $e^{-\zeta\min\crl*{a,b}} \leq e^{-\zeta a} + e^{-\zeta b}$, we have, for a constant $c''$,
\begin{align}
\E F(x_T) - F^* 
&\leq c''\cdot\bigg(HB^2\exp\prn*{-\frac{T}{160\alpha B}} + \frac{HB^2}{TK} + \frac{\sigma B}{\sqrt{MK}} + \frac{HB^2\log TK}{TK} \nonumber\\
&\qquad\qquad+ \frac{\rho B^2\log TK}{T\sqrt{K}} + HB^2\log(TK)\exp\prn*{ -\frac{K}{10}} \bigg)\\
&\leq c''\cdot\bigg(HB^2\exp\prn*{-\frac{T}{160\alpha B}} + \frac{\sigma B}{\sqrt{MK}} + \frac{HB^2\log TK}{TK} \nonumber\\
&\qquad\qquad+ \frac{\rho B^2\log TK}{T\sqrt{K}} + HB^2\log(TK)\exp\prn*{ -\frac{K}{10}} \bigg)\ ,
\end{align}
where the last inequality follows from the fact $\log(TK) \geq 1$, since $TK \geq 175$.

Finally, each call to \Cref{alg:constrained-quadratic} requires at most $C \left\lceil \log N \right\rceil$ rounds of communication (one for each call to Algorithm \ref{alg:one-shot-averaging}). Therefore, we can implement up to $R / (C\left\lceil \log N \right\rceil)$ iterations of Algorithm \ref{alg:stochastic-newton} using our $R$ rounds of communication. We recall that $N$ and $C$ are set as
\begin{align}
N &= \left\lceil1 + \frac{5}{2}\log\frac{H(B + 5T\rr)}{3\lmin\rr}\right\rceil \\
C &= \left\lceil 8\log\prn*{\ceil{\log_2 N}\prn*{4 + \frac{e H}{\lmin} + \frac{80H(B + 5T\rr)}{\lmin \rr}}} \right\rceil\ .
\end{align}
Therefore, we need to choose $T$ such that $T \leq \frac{R}{C\left\lceil\log N\right\rceil}$. To provide an explicit lower bound on how large $T$ can be, we therefore lower bound the right hand side. 
First, we have
\begin{align}
N 
&= \left\lceil1 + \frac{5}{2}\log\frac{H(B + 5T\rr)}{3\lmin\rr}\right\rceil \\
&\leq 2 + \frac{5}{2}\log\frac{H\prn*{B + 5T\frac{B}{T}\log(TK)}}{3\frac{2eH}{K-2}\min\crl*{\frac{32B}{T}\log(TK),\,\frac{1}{5\alpha}}} \\
&\leq 2 + \frac{5}{2}\log\frac{BK\log(TK)}{e\min\crl*{\frac{32B}{T}\log(TK),\,\frac{1}{5\alpha}}} \\
&\leq 2 + \frac{5}{2}\max\crl*{\log (TK\log(TK)),\, \log(2\alpha BK\log(TK))} \\
&\leq 2 + 5\log(1 + 2\alpha B) + 5\log (TK).
\end{align}
Similarly, 
\begin{align}
C 
&= \ceil*{8\log\prn*{\ceil{\log_2 N}\prn*{4 + \frac{e H}{\lmin} + \frac{80H(B + 5T\rr)}{\lmin \rr}}}} \\
&\leq 1 + 8\log\prn*{\ceil{\log_2 N}\prn*{4 + \frac{e H}{\frac{2eH}{K-2}} + 240\max\crl*{TK\log(TK),\, 2\alpha BK\log(TK)}}} \\
&\leq 1 + 8\log\prn*{\ceil{\log_2 N}\prn*{4K + 240\max\crl*{T^2K^2,\, 2\alpha BTK^2}}} \\
&\leq 1 + 8\log\prn*{\prn*{1 + \log N}\prn*{353T^2K^2 + 693\alpha BTK^2}} \\
&\leq 1 + 8\log(693) + 16\log(TK) + 8\log(1 + \alpha B) + 8\log(1 + \log(N)) \\
&\leq 54 + 16\log(TK) + 8\log(1 + \alpha B) + 16\log\log N \\
&\leq 80 + 32\log(TK) + 24\log(1+2\alpha B).
\end{align}
Therefore, 
\begin{align}
\frac{R}{C\left\lceil \log N \right\rceil} 
&\geq \frac{R}{\prn*{80 + 32\log(TK) + 24\log(1+2\alpha B)}\left\lceil\log\prn*{2 + 5\log(1 + 2\alpha B) + 5\log (TK)}\right\rceil} \\
&\geq \frac{R}{\prn*{80 + 32\log(TK) + 24\log(1+2\alpha B)}^2} \\
&\geq \frac{R}{2048 + 2\prn*{80 + 32\log K + 24\log(1+2\alpha B)}^2}\ .
\end{align}
So, it suffices to choose $T$ such that
\begin{equation}
T\log^2(T) \leq \frac{R}{2048 + 2\prn*{80 + 32\log K + 24\log(1+2\alpha B)}^2}\ .
\end{equation}

Note that equality holds for
\begin{align*}
    T &= \exp\prn*{2W\prn*{\prn*{\frac{R}{4096 + 4\prn*{80 + 32\log K + 24\log(1+2\alpha B)}^2}}^{1/2}}},
\end{align*}
where $W(\cdot)$ denotes the Lambert $W$ function. Thus, because $W(x) \geq \log(x)-\log\log(x)$ for $x \geq e$, and since we assume \[R \geq \tilde{\Omega}(1) = e^2\prn*{4096 + 4\prn*{80 + 32\log K + 24\log(1+2\alpha B)}^2},\] we have that
\begin{align*}
    &\exp\prn*{2W\prn*{\prn*{\frac{R}{4096 + 4\prn*{80 + 32\log K + 24\log(1+2\alpha B)}^2}}}^{1/2}}\\
    &\geq \exp\Bigg(2\prn*{\log\prn*{\frac{R}{4096 + 4\prn*{80 + 32\log K + 24\log(1+2\alpha B)}^2}}^{1/2}} \\
    &\qquad - \log\log\prn*{\frac{R}{4096 + 2\prn*{80 + 32\log K + 24\log(1+2\alpha B)}^2}}^{1/2}\Bigg)\\
    & = \frac{1}{4}\prn*{\frac{R}{4096 + 4\prn*{80 + 32\log K + 24\log(1+2\alpha B)}^2}}\\
    &\qquad\cdot\log^2\prn*{\prn*{\frac{R}{4096 + 4\prn*{80 + 32\log K + 24\log(1+2\alpha B)}^2}}}.
\end{align*}

Thus, letting 
\begin{align*}
    T &= \Bigg\lfloor\frac{1}{4}\prn*{\frac{R}{4096 + 4\prn*{80 + 32\log K + 24\log(1+2\alpha B)}^2}}\\
    &\qquad\cdot\log^2\prn*{\prn*{\frac{R}{4096 + 4\prn*{80 + 32\log K + 24\log(1+2\alpha B)}^2}}}\Bigg\rfloor,
\end{align*}
and using $\tilde{O}$ notation to hide polylogarithmic factors in $R$, $K$, and $\alpha B$, we have
\begin{align}
\E [F(x_T)] - F^* \leq HB^2\prn*{\exp\prn*{-\frac{R}{\tilde{O}\prn*{\alpha B}}} + \exp\prn*{ -\frac{K}{O(1)}}} + \tilde{O}\prn*{\frac{\sigma B}{\sqrt{MK}} + \frac{HB^2}{KR} + \frac{\rho B^2}{\sqrt{K}R}}\ .
\end{align}

This completes the proof.
\end{proof}

\section{Additional Information for Experiments}\label{sec:add_exp}
\subsection{Baselines}\label{sec:baselines}
We have compared \fedsnlite{} against the two variants of \fedac{} \citep{yuan2020federated}, Minibatch SGD \citep{dekel2012optimal}, and Local SGD \citep{zinkevich2010parallelized}. Two settings of hyperparameters are considered for \fedac{} in \cite{yuan2020federated} for strongly convex functions:
\begin{itemize}
    \item \textbf{\textsc{FedAc-I}}: $\eta \in (0,1/H],\ \gamma = \max\left\{\sqrt{\frac{\eta}{\lambda K}}, \eta\right\},\ \alpha = \frac{1}{\gamma \lambda}, \beta = \alpha + 1$;
    \item \textbf{\textsc{FedAc-II}}: $\eta \in (0,1/H],\ \gamma = \max\left\{\sqrt{\frac{\eta}{\lambda K}}, \eta\right\},\ \alpha = \frac{3}{2\gamma \lambda} - \frac{1}{2}, \beta = \frac{2\alpha^2-1}{\alpha -1}$,
\end{itemize}
where $H$ is the smoothness constant as in Assumption \ref{assump:a1}, $\lambda$ is an estimate of the strong convexity, and $\eta$ is the learning rate which has to be tuned. Thus, a limitation of \fedac{} is that it requires either the knowledge of $\lambda$ (say, through explicit $\lambda$-regularization), or that the algorithm adds regularization to the objective itself (this is how \cite{yuan2020federated} present \fedac{} for general convex functions). In our experiments we have both of these settings, i.e., \fedac{} with internal regularization $\lambda$ or explicitly $\lambda$-regularized objectives. For brevity, in \Cref{alg:fedac} we present \fedac{} with five hyperparameters: $\alpha$, $\beta$, $\eta$, $\gamma$, and $\lambda$. When the objective is regularized we use $\lambda=0$ (c.f. \Cref{subsec:experiment2}, Experiment 2), whereas otherwise we tune $\lambda$ (c.f. \Cref{subsec:experiment1}, Experiment 1), to ensure the best possible performance for \fedac{}.

\begin{algorithm}[H]
   \caption{\fedac{}$(x_0, \alpha, \beta, \eta, \gamma, \lambda)$}
   \label{alg:fedac}
\begin{algorithmic}
    \STATE \hspace{-1em}(Operating on objective $F(\cdot) + \lambda/2\norm{.}^2$ as opposed to $F(.)$ with stochastic gradient oracle $g_{\lambda}(\cdot;\cdot)$\footnotemark.)
    \STATE \textbf{Intitialize:} $x^{ag,m}_0 = x^m_0 = x_0$ for all $m\in[M]$
   \FOR{$t = 0,1,\dots,T-1$}
   \FOR{every worker $m\in[M]$ \textbf{in parallel}}
   \STATE $x^{md, m}_t \gets \beta^{-1}x^m_t + (1-\beta^{-1})x^{ag, m}_t$
   \STATE $g^m_t \gets g_{\lambda}(x^{md, m}_t, z^m_t)$ \hfill{} $\triangleright$ Query the stochastic first-order oracle at $x^{md, m}_t$, for $z^m_t\sim \mathcal{D}$
   \STATE $v^{ag,m}_{t+1} \gets x^{md, m}_t - \eta \cdot g^m_t$
   \STATE $v^{m}_{t+1} \gets (1-\alpha^{-1})x^m_t + \alpha^{-1}x^{md, m}_t - \gamma g^m_t$
   \IF{$t$ mod $K = -1$}
   \STATE $x^m_{t+1} \gets \frac{1}{M}\sum_{m'=1}^{M}v^{m'}_{t+1}$
   \STATE $x^{ag,m}_{t+1} \gets \frac{1}{M}\sum_{m'=1}^{M}v^{ag,m'}_{t+1}$
   \ELSE
   \STATE $x^m_{t+1}\gets v^m_{t+1}$
   \STATE $x^{ag,m}_{t+1}\gets v^{ag,m}_{t+1}$
   \ENDIF
   \ENDFOR
   \ENDFOR
   \STATE \textbf{Return:} $\bar{x}^{ag}_T = \frac{1}{M}\sum_{m'=1}^{M}x^{ag, m'}_T$
\end{algorithmic}

\end{algorithm}
\footnotetext{Note that $g_\lambda(x;z) = g(x;z) + \lambda x$, i.e., the stochastic oracle for the regularized objective can always be obtained using the oracle for the unregularized objective (see Assumption \ref{assump:a1}).)}
In \Cref{alg:lsgd} we describe Local SGD (a.k.a. \textsc{FedAvg}) \citep{zinkevich2010parallelized} with learning rate $\eta$ and Polyak's momentum (a.k.a. heavy ball method) parameter $\beta$. Setting $\beta=0$ recovers the familiar algorithm as analyzed in \cite{woodworth2020local}. Finally, in \Cref{alg:mbsgd} we describe Minibatch SGD with fixed learning rate $\eta$ and momentum parameter $\beta$. Note that in our experiments we compare the algorithms using the same number of machines $M$, communication rounds $R$ and theoretical parallel runtime $T$ against each other, where $T=KR$. Also note that unlike \Cref{alg:fedac}, there is no internal regularization in \Cref{alg:lsgd,alg:mbsgd}. While conducting Experiment 2 (c.f., \cref{subsec:experiment2}) we assume that $F(.)$ is regularized, so that \Cref{alg:lsgd,alg:mbsgd} instead minimize $F(.) + \frac{\lambda}{2}\norm{.}^2$ and have access to $g_\lambda(.;.)$. The reason why $\lambda$ is not presented as a hyperparameter for any algorithms beside \fedac{}, is because the regularization is a part of the algorithm \fedac{} itself which we tune besides its learning rate (c.f., Section E in \cite{yuan2020federated}).  

\begin{algorithm}[H]
   \caption{Local SGD$(x_0, \beta, \eta)$}
   \label{alg:lsgd}
\begin{algorithmic}
\STATE (Operating on objective $F(\cdot)$ with stochastic gradient oracle $g(\cdot;\cdot)$.)
\STATE \textbf{Intitialize:} $x_{0,0}^{m}\gets x_{0}$ for all $m\in [M]$
   \FOR{$r = 0,\dots,R-1$}
   \FOR{every worker $m\in[M]$ \textbf{in parallel}}
   \FOR{$k = 0, \dots, K-1$}
        \STATE $g^m_{r,k} \gets g(x^m_{r,k}; z_{r,k}^m)$ \hfill{} $\triangleright$ Query the stochastic first-order oracle at $x^m_{r,k}$ (for $z^m_{r,k}\sim \mathcal{D}$ drawn by the oracle)
        \STATE $x^m_{r,k+1} \gets x^m_{r,k} - \eta g_{r,k}^m + \mathbbold{1}_{k>0}\beta(x^m_{r,k} - x^m_{r,k-1})$
    \ENDFOR
   \ENDFOR
   \STATE $x_{r+1} \gets \frac{1}{M}\sum_{m'=1}^{M}x_{r,K}^{m'}$
   \ENDFOR
   \STATE \textbf{Return:} $x_R$
\end{algorithmic}
\end{algorithm}

\begin{algorithm}[H]
   \caption{Minibatch SGD$(x_0, \beta, \eta)$}
   \label{alg:mbsgd}
\begin{algorithmic}
    \STATE (Operating on objective $F(\cdot)$ with stochastic gradient oracle $g(\cdot;\cdot)$.)
    \STATE \textbf{Intitialize:} $x^m_0 = x_0$ for all $m\in[M]$
   \FOR{$r = 0,\dots,R-1$}
   \FOR{every worker $m\in[M]$ \textbf{in parallel}}
   \FOR{$k = 0, \dots, K-1$}
        \STATE $g^m_{r,k} \gets g(x_r; z_{r,k}^m)$ \hfill{} $\triangleright$ Query the stochastic first-order oracle at $x_r$ (for $z^m_{r,k}\sim \mathcal{D}$ drawn by the oracle)
    \ENDFOR
   \ENDFOR
   \STATE $g_r \gets \frac{1}{KM}\sum_{k'=0}^{K-1}\sum_{m'=1}^{M}g^{m'}_{r,k'}$
   \STATE $x_{r+1} \gets x_{r} - \eta g_{r} + \mathbbold{1}_{r>0}\beta (x_{r} - x_{r-1})$
   \ENDFOR
   \STATE \textbf{Return:} $x_R$
\end{algorithmic}
\end{algorithm}

In addition, we may see that \ossgd{} with $\eta_k(\lambda) = \eta$ and $w_k(\lambda) = \frac{1}{K}$ (as in each iteration of \fedsnlite{}) is a special case of Local SGD (\Cref{alg:lsgd}) with $R=1$. Thus to summarize, in our experiments, we have compared the following algorithms: 
\begin{itemize}
    \item \fedsnlite{} (which uses \ossgd{}) without momentum i.e., $\beta = 0$ or with optimally tuned momentum $\beta\in\{0.1, 0.3, 0.5, 0.7, 0.9\}$,
    \item \textsc{FedAc-1} and \textsc{FedAC-2}, with either no internal regularization i.e., $\lambda = 0$ (in experiment 2 when the objective is explicitly regularized) or optimally tuned internal regularization $\lambda\in \{$1e-~2, 1e-3, 1e-4, 1e-5, 1e-6$\}$ (in experiment 1),
    \item Local SGD without momentum i.e., $\beta = 0$ or with tuned momentum $\beta\in\{0.1, 0.3, 0.5, 0.7, 0.9\}$,
    \item Minibatch SGD without momentum i.e., $\beta = 0$ or with tuned momentum $\beta\in\{0.1, 0.3, 0.5, 0.7, 0.9\}$,
\end{itemize}

We search for the best learning rate $\eta \in \{0.0001,0.0002,$ $0.0005, 0.001, 0.002, 0.005, 0.01, 0.02, 0.05, 0.1, 0.2, \\0.5, 1, 2, 5, 10, 20\}$ for every configuration of each algorithm. We verify (along with \cite{yuan2020federated}) that the optimal learning rate always lies in this range.

\subsection{More implementation details}\label{sec:implementation_details}

\textbf{Dataset.} Following the setup in \cite{yuan2020federated}, we use the full \href{https://www.csie.ntu.edu.tw/~cjlin/libsvmtools/datasets/binary.html#a9a}{LIBSVM a9a} \citep{CC01a, Dua:2019} dataset. It is a binary classification dataset with $32{,}561$ points and $123$ features. For generating \cref{fig:a9a_grid_100_small} and \ref{fig:a9a_grid_100_reg_small} we use the entire dataset as a training set. For generating \cref{fig:a9a_grid_100_test_small} we split the dataset, using $20{,}000$ points as the training set and the rest as the validation set.

\textbf{Experiment 2.} As described in the body of the paper, each sub-plot (in \Cref{fig:a9a_grid_100_reg_small}) shows the performance of the different algorithms and their variants (as discussed above) on $\mu$-regularized logistic regression.  For the experiments we vary the values of $\mu$, $M$, and $R$, for fixed theoretical parallel runtime $KR$. Each of these is a different \emph{setting}, reflecting different computation-communication-accuracy trade-offs. For each of these settings, we run an algorithm configuration (with some $\beta$ and $\lambda=0$ for \fedac{}), with different learning rates $\eta$, and record the best loss (which is the Regularized ERM-loss on the entire dataset) obtained \emph{at any point} during optimization. Based on this run we pick the optimal learning rate for each setting for each algorithm.

Note that this means every point in a sub-plot is an individual experiment, for which we have tuned. Since we are only concerned with optimization, we tune and report the accuracy only on the training set (with $32{,}561$ points). Both the stochastic first-order oracle and the Hessian-vector product oracle, used by the respective algorithms, do sampling with replacement. Following \cite{yuan2020federated}, we ensure that our learning rate tuning grid was both wide and fine enough for the algorithms to achieve their optimal training losses. Since we are using stochastic algorithms which sample with replacement, without any definite order on the dataset (as opposed to making a single pass), the best suboptimality is potentially different for each run of the algorithm, even when choosing the same learning rate and initialization. Thus, once we have the optimal parameters ($\eta,\beta, \lambda$), we rerun the algorithm multiple times on the corresponding setting. The error bars represent the standard deviation of the best suboptimality obtained when using this optimal learning rate. 

Note that changing the regularization strength changes the optimal error value on the RERM task. Thus, to get the optimal loss value, we run exact Newton's method separately on each of these settings (for different values of $\mu$), until the algorithm had converged up to $\approx 12$ decimal places. We also ensure that our optimal loss values look similar to \cite{yuan2020federated}. The reported suboptimality in \Cref{fig:a9a_grid_100_small} is obtained after subtracting this optimal error value from the algorithms' error, followed by dividing this excess error with the optimal value.

\textbf{Experiment 1 (a).} As described in the body of the paper, each sub-plot in \Cref{fig:a9a_grid_100_small}  shows the performance of the different algorithms and their variants (as discussed above) on logistic regression for getting the best possible unregularized training loss. We train \fedac{} on an appropriately regularized RERM problem on the training set. All other variants minimize an ERM problem on the dataset. This is achieved through the internal regularization parameter for $\fedac{}$ as discussed above. We vary $M$ and $R$ while keeping the parallel runtime $KR$ to be fixed. Our hyperparameter tuning and repetitions are similar to the description for experiment 2 above, though we additionally tune the regularization strength $\lambda$ in the RERM problem for \fedac{}. All the algorithms do sampling with replacement on the training set, and can make multiple passes on the dataset. The optimal training loss is again obtained using 100 iterations of the Newton method.

\textbf{Experiment 1 (b).} As described in the body of the paper, each sub-plot in \Cref{fig:a9a_grid_100_test_small}  shows the performance of the different algorithms and their variants (as discussed above) on logistic regression for getting the best possible validation loss. We train \fedac{} on an appropriately regularized RERM problem on the training set. All other variants minimize an ERM problem on the training set. This is achieved through the internal regularization parameter for $\fedac{}$ as discussed above. We vary $M$ and $R$ while keeping the parallel runtime $KR$ to be fixed. Our hyperparameter tuning and repetitions are similar to the description for experiment 2 above, though we additionally tune the regularization strength $\lambda$ in the RERM problem for \fedac{}. We use the same split of training and validation datasets across the multiple repetitions. All the algorithms do sampling without replacement on the training set, and can make at most one pass on the dataset.   

\textbf{Hardware details.} All the experiments were performed on a personal computer, Dell XPS 7390. The total compute time (CPU) on the machine was about $200$ hours. No GPUs were used in any of the experiments in this paper. 

\subsection{Computational cost of a Hessian-vector product oracle}\label{sec:comp_cost}
Consider minimizing the function $F(x) = \mathbb{E}_z[f(x;z)]$ given access to various stochastic oracles. Note that \fedac{}, Minibatch SGD, and Local SGD are all first-order algorithms, in that they use a first-order stochastic oracle. Thus, each time they observe a stochastic gradient (as per Assumption \ref{assump:a1}), the oracle uses a single unit of randomness (e.g., a single data point, when thinking in terms of a finite training dataset).

In contrast, our main theoretical method \fedsn{} proceeds via two possible options: for Case 1 as established in \Cref{alg:quad-grad-access}, the algorithm queries both a stochastic gradient and a stochastic Hessian-vector product oracle at two independent samples, while in Case 2, we consider a different setting which allows the algorithm to observe, for any $x,u$, both a stochastic gradient and stochastic Hessian-vector product using the \emph{same} random sample $z$. We may note that the final theoretical guarantee for Case 1 differs by only a small constant factor from that of Case 2, and our results as presented in \Cref{thm:combined-alg} apply to both cases.

Thus, in order to maintain a fair comparison between these first-order methods and our practical method \fedsnlite{}, we have implemented \fedsnlite{} so that it also uses a single random sample (single data point), as outlined via Case 2 (\texttt{Same-Sample}) in \Cref{alg:quad-grad-access}.

In addition to keeping the number of samples consistent, it is important to understand how both oracle models compare computationally. Clearly every oracle call for \fedsnlite{} is at least as expensive as a first-order stochastic oracle, as it subsumes the latter. Moreover, it is unclear a priori if the Hessian-vector product can be computed efficiently (say, in as much time as vector addition or multiplication). However, it turns out that the Hessian-vector product for logistic regression can be efficiently computed, since the Hessian matrix for a given sample is actually rank one (i.e., an outer product of known vectors). This is generally true for loss functions which belong to the family of \textit{generalized linear models}. To see this, note that for loss functions of the form $F(x) = \sum_{i=1}^{N}\phi(b_ix^\top a_i)$, we have by a simple calculation that
\begin{equation*}
    \nabla_x F(x) = \sum_{i=1}^{N}\phi'(b_ix^\top a_i)b_i\cdot a_i\quad \textrm{and}\quad \nabla_x^2 F(x) := \nabla_x(\nabla_x F(x)) = \sum_{i=1}^{N}\phi''(b_ix^\top a_i)b_i^2\cdot a_ia_i^\top,
\end{equation*}
and so for any $v \in \R^d$, 
\begin{equation*}
    \nabla_x^2 F(x)v = \sum_{i=1}^{N}\phi''(b_ix^\top a_i)b_i^2\cdot a_i\cdot (a_i^\top v),
\end{equation*}
which means each summand can be calculated in $\mathcal{O}(d)$ time. When maximizing the log-likelihood in the logistic regression model with labels in $\{-1,1\}$, we need to minimize the following function,
\begin{equation*}
F(x) = \sum_{i=1}^{N}\left(-b_ix^\top a_i + \ln\left(1+\exp\left(b_ix^\top a_i\right)\right)\right),
\end{equation*}
which is an instance of the generalized linear models as considered above. Thus, in terms of vector operations, the Hessian-vector product oracle and the stochastic gradient oracle are \textit{asymptotically similar} in the logistic regression model. We also note that for a general class of differentiable functions, \cite{pearlmutter1994fast} provides a technique to compute the Hessian-vector product using two passes of backpropagation (in the context of neural networks). For instance, we may consider a twice-differentiable function $F: \R^d \mapsto \R$ and note that for a vector $v \in \R^d$,
\begin{align*}
\nabla^2_x F(x)v = \nabla_x\left(\nabla_xF(x)^\top v\right),    
\end{align*}
which can be obtained using two passes of backpropagation. 

For the scale of our problem, it turns out that the difference in the number of vector operations is outweighed by other implementational overhead (for, e.g., loops, memory operations, etc.). In \Cref{tab:wall_clock_times} we show the average per-step runtimes (over $250$ runs) for three different algorithms for $M=1, R=1, K=100000$.
\begin{table}[H]
\def\arraystretch{1.5}%
\centering
\begin{tabular}{ l c c} 
 \toprule
 \textbf{Algorithm} & \textbf{Avg. Runtime/Step} (in $10^{-5}$ sec.) & \textbf{Std. Dev. of Runtime/Step} (in $10^{-5}$ sec.)\\
 \midrule 
 \fedsnlite{} & 6.67 & 1.42\\ 
 Local SGD & 6.39 & 1.37\\
 \fedac{} & 7.48 & 1.58\\
 \toprule 
\end{tabular}
\caption{Comparing the wall-clock runtimes of different algorithms in our implementation. We run every repetition of each algorithm as an individual job, so that the runs are \textit{independent}, i.e., we don't introduce extraneous biases for any one algorithm (e.g., thermal throttling affecting the runtimes for a single algorithm). We run each algorithm $1700$ times so that the $95\%$ error margins for the average runtime/step estimate are within $1\%$ of its value. Specifically, we calculate the empirical standard deviation $\sigma$ of our average runtime/step estimate $\bar{x}$ by running it for $n=1700$ runs. Then we report the error margin $\delta = z\sigma/\sqrt{n}$, where $z=1.96$, is the z*-value from the standard normal distribution for a $95\%$ confidence level. We ensure that $n=1700$ is large enough, so that $\delta/\bar{x}\leq 0.01$. Finally we round $\bar{x}$ to two significant digits, respecting the $1\%$ error margin. We also report the standard deviation estimate of the runtime/step so that its $95\%$ error margins are within $4\%$ of its value.}
\label{tab:wall_clock_times}
\end{table}

\subsection{Additional experiments}\label{sec:add_exps}

In this section we present more comprehensive versions of the experiments presented in \Cref{sec:experiments}. 

\begin{figure}[H]
    \centering
    \includegraphics[width=\textwidth]{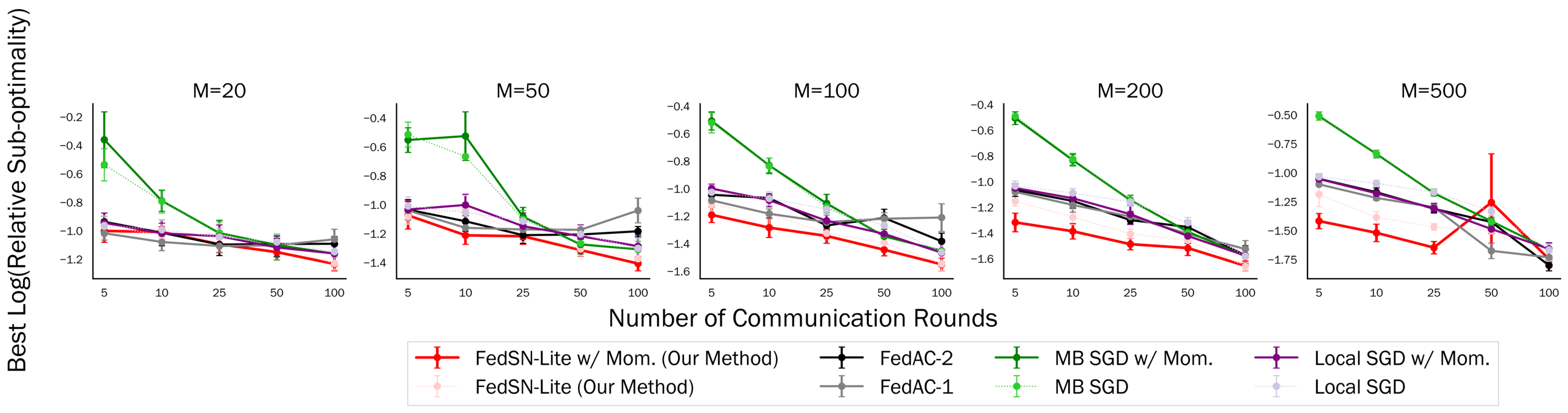}
    \caption{\small The same experiment as in \cref{fig:a9a_grid_100_small} but with a broader range of values of $M, \mu$. All optimization runs were repeated $70$ times.}
    \label{fig:a9a_grid_100}
\end{figure}

\begin{figure}[H]
    \centering
    \includegraphics[width=\textwidth]{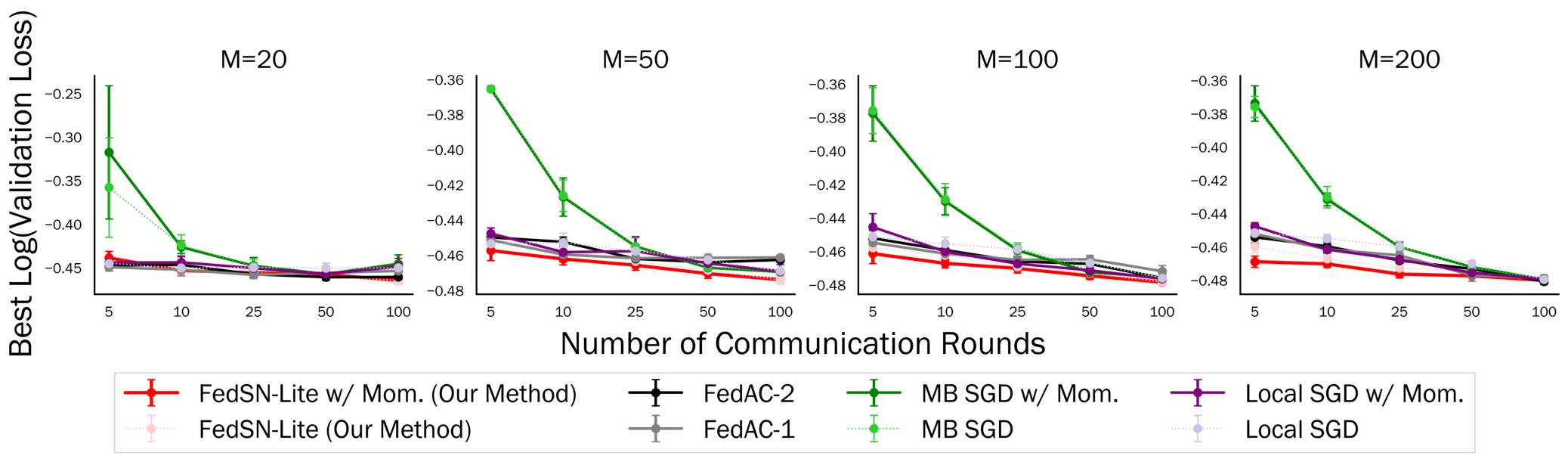}
    \caption{\small The same experiment as in \cref{fig:a9a_grid_100_test_small} but with a broader range of values of $M, \mu$. All optimization runs were repeated $20$ times.}
    \label{fig:a9a_grid_100_test}
\end{figure}

\begin{figure}[H]
    \centering
    \includegraphics[width=\textwidth]{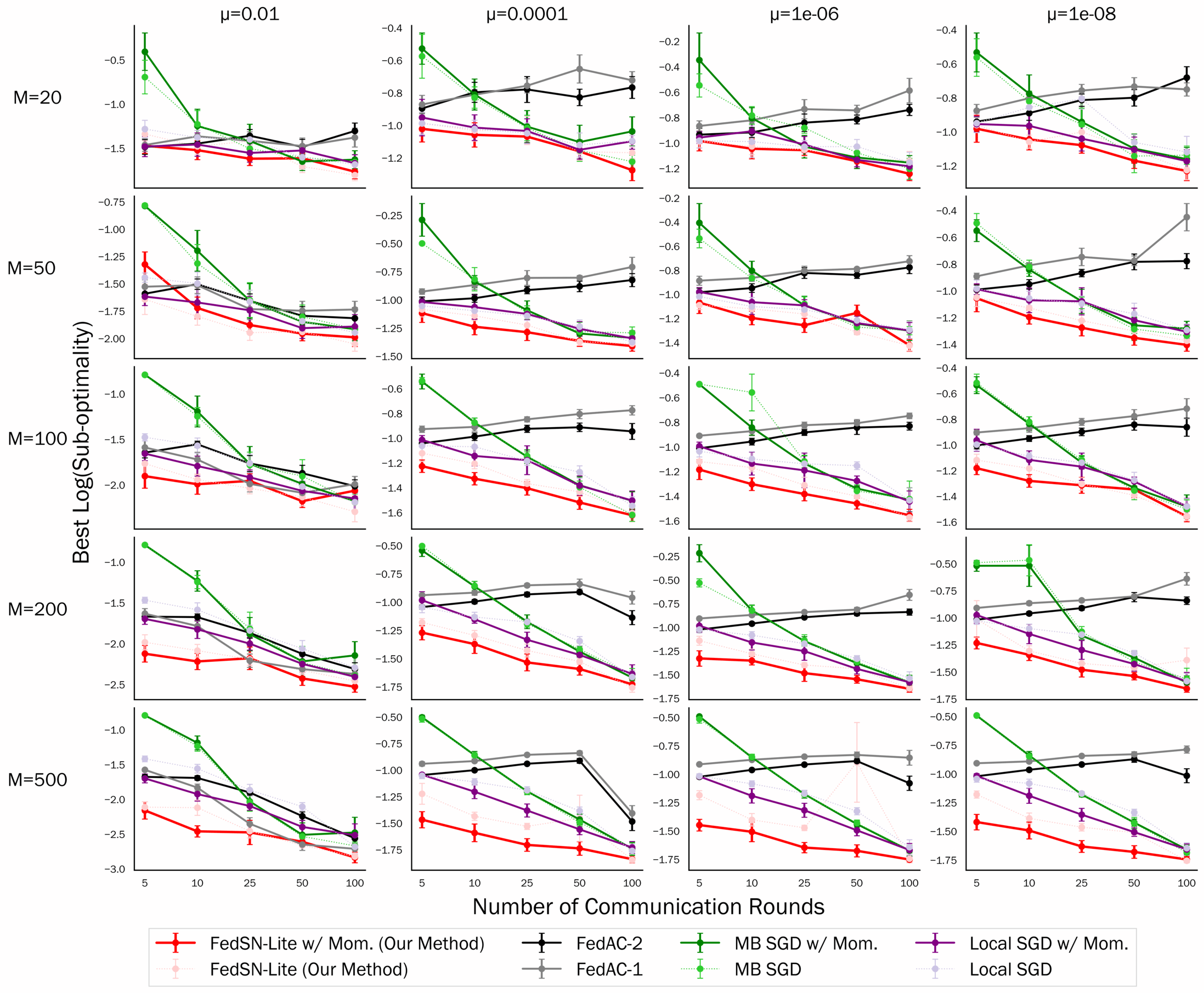}
    \caption{\small The same experiment as in \cref{fig:a9a_grid_100_reg_small} but with a broader range of values of $M, \mu$. All optimization runs were repeated $70$ times.}
    \label{fig:a9a_grid_100_reg}
\end{figure}

\begin{figure}[H]
    \centering
    \includegraphics[width=\textwidth]{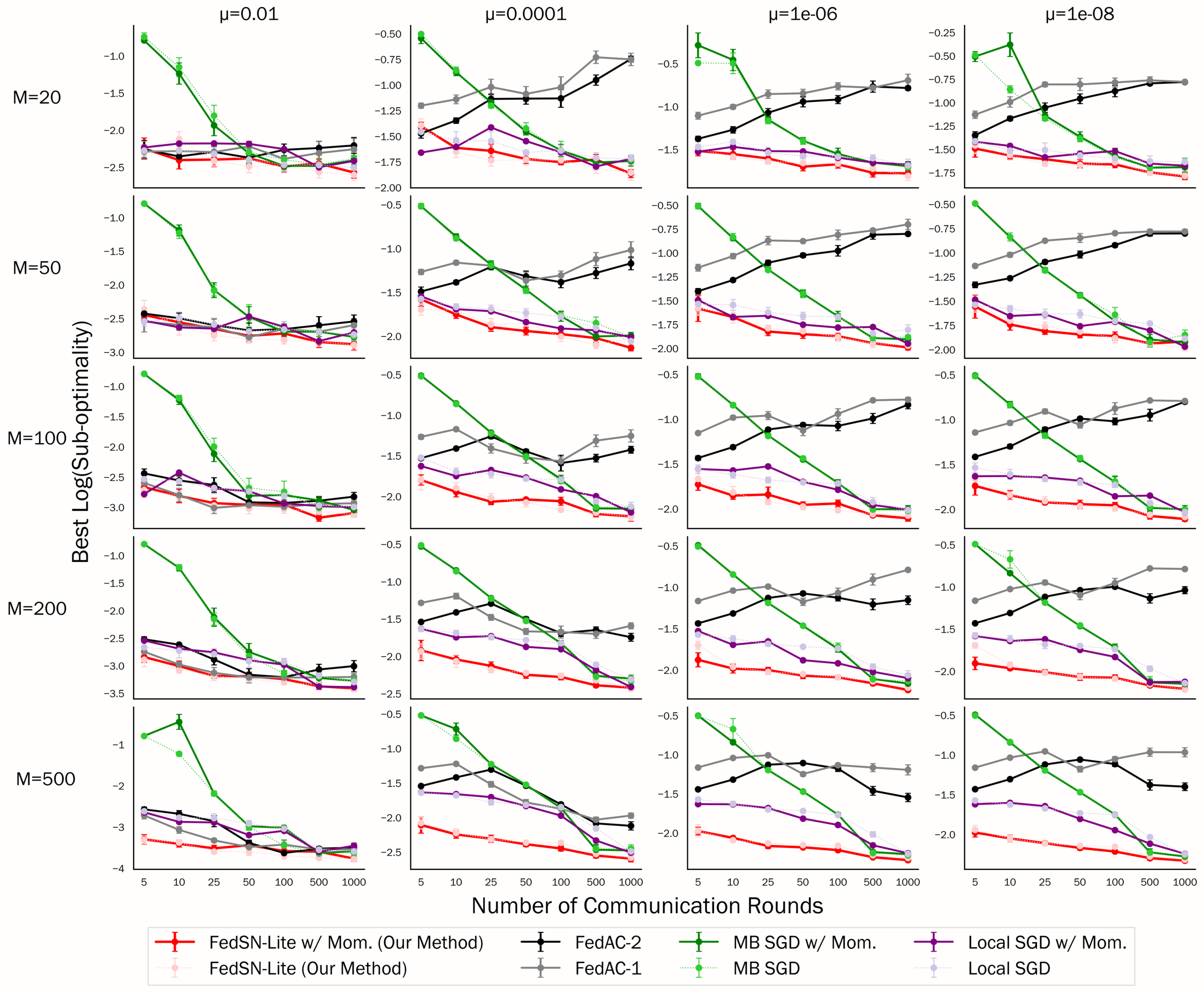}
    \caption{\small The same experiment as in \cref{fig:a9a_grid_100_reg_small} with $T=1000$ and a broader range of values of $M, \mu$. All optimization runs were repeated $20$ times.}
    \label{fig:a9a_grid_1000_reg}
\end{figure}

\end{document}